\NeedsTeXFormat{LaTeX2e}
\documentclass[a4paper]{amsart}

\usepackage[all]{xy}
\usepackage{amssymb,enumitem,nicefrac,mathtools}
\usepackage[british]{babel}

\usepackage[pdftitle={C*-Algebras over Topological Spaces: Filtrated K-Theory},
  pdfauthor={Ralf Meyer and Ryszard Nest},
  pdfsubject={Mathematics; MSC 18E30, 19K35, 46L80, 55U35}
]{hyperref}
\usepackage[lite]{amsrefs}
\usepackage{microtype}

\newcommand*{\MRref}[2]{ \href{http://www.ams.org/mathscinet-getitem?mr=#1}{MR \textbf{#1}}}

\renewcommand{\PrintDOI}[1]{\href{http://dx.doi.org/\detokenize{#1}}{doi: \detokenize{#1}}%
  \IfEmptyBibField{pages}{, (to appear in print)}{}}

\DeclareMathOperator{\Hom}{Hom}
\DeclareMathOperator{\Prim}{Prim}
\DeclareMathOperator{\Ext}{Ext}
\DeclareMathOperator{\Tor}{Tor}
\DeclareMathOperator{\coker}{coker}
\DeclareMathOperator{\range}{range}

\newcommand*{\Left}{\mathbb L}
\newcommand*{\KK}{\textup{KK}}
\newcommand*{\K}{\textup{K}}
\newcommand*{\Good}{\textup{Good}}
\newcommand*{\Ch}{\textup{Ch}}  
\newcommand*{\nil}{\textup{nil}}
\newcommand*{\sesi}{\textup{ss}}  

\newcommand*{\into}{\rightarrowtail}
\newcommand*{\prto}{\twoheadrightarrow}

\newcommand*{\cl}[1]{\overline{#1}} 

\newcommand*{\CONT}{\textup C}    
\newcommand*{\Repr}{\mathcal R}    
\newcommand*{\Nattrafo}{\mathcal{NT}} 
\newcommand*{\FK}{\textup{FK}}

\newcommand*{\Cstarcat}{\mathfrak{C^*alg}}
\newcommand*{\Cstarsep}{\mathfrak{C^*sep}}
\newcommand*{\KKcat}{\mathfrak{KK}}

\newcommand*{\Cat}{\mathfrak C}
\newcommand*{\Tri}{\mathfrak T}
\newcommand*{\Gen}{\mathfrak G}
\newcommand*{\Ideal}{\mathfrak I}
\newcommand*{\Null}{\mathcal N}
\newcommand*{\Abel}{\mathcal A}
\newcommand*{\Ab}{\mathfrak{Ab}}
\newcommand*{\Mod}[1]{\mathfrak{Mod}(#1)}
\newcommand*{\CMod}[1]{\mathfrak{Mod}(#1)_\textup c}

\newcommand*{\op}{\textup{op}}
\newcommand*{\ID}{\textup{id}}

\newcommand*{\C}{\mathbb C}
\newcommand*{\R}{\mathbb R}
\newcommand*{\Z}{\mathbb Z}
\newcommand*{\N}{\mathbb N}
\newcommand*{\Ideals}{\mathbb I}
\newcommand*{\Open}{\mathbb O}
\newcommand*{\Loclo}{\mathbb{LC}}

\newcommand*{\nb}{\nobreakdash}

\newcommand*{\Bootstrap}{\mathcal B}

\newcommand*{\Cst}{\textup C^*}
\newcommand*{\Star}{\texorpdfstring{$^*$\nb-}{*-}}

\newcommand*{\blank}{\text{\textvisiblespace}}
\newcommand*{\inOb}{\mathrel{\in\in}}

\newcommand*{\defeq}{\mathrel{\vcentcolon=}}

\newcommand*{\lad}{\vdash}

\theoremstyle{plain}
\newtheorem{theorem}{Theorem}
\newtheorem{proposition}[theorem]{Proposition}
\newtheorem{lemma}[theorem]{Lemma}
\newtheorem{corollary}[theorem]{Corollary}
\theoremstyle{definition}
\newtheorem{definition}[theorem]{Definition}
\theoremstyle{remark}
\newtheorem{remark}[theorem]{Remark}
\newtheorem{example}[theorem]{Example}

\numberwithin{equation}{section}
\numberwithin{theorem}{section}

\hyphenation{mo-no-mor-phism mo-no-mor-phisms con-tract-i-ble Christen-sen Ver-dier Steen-rod}

\begin{document}

\title[\(\Cst\)-Algebras over Topological Spaces: Filtrated $\K$-Theory]{\(\Cst\)-Algebras over Topological Spaces:\\ Filtrated $\K$-Theory}

\author{Ralf Meyer}
\address{Mathematisches Institut and Courant Research Centre ``Higher Order Structures''\\
  Georg-August Universit\"at G\"ottingen\\
  Bunsenstra{\ss}e 3--5\\
  37073 G\"ottingen\\
  Germany}
\email{rameyer@uni-math.gwdg.de}

\author{Ryszard Nest}
\address{K{\o}benhavns Universitets Institut for Matematiske Fag\\
         Universitetsparken 5\\ 2100 K{\o}benhavn\\ Denmark
}
\email{rnest@math.ku.dk}

\subjclass[2000]{19K35, 46L35, 46L80, 46M18, 46M20}

\begin{abstract}
  We define the filtrated \(\K\)\nb-theory of a \(\Cst\)\nb-algebra over a finite topological space~\(X\) and explain how to construct a spectral sequence that computes the bivariant Kasparov theory over~\(X\) in terms of filtrated \(\K\)\nb-theory.

  For finite spaces with totally ordered lattice of open subsets, this spectral sequence becomes an exact sequence as in the Universal Coefficient Theorem, with the same consequences for classification.

  We also exhibit an example where filtrated \(\K\)\nb-theory is not yet a complete invariant.  We describe two \(\Cst\)\nb-algebras over a space~\(X\) with four points that have isomorphic filtrated \(\K\)\nb-theory without being \(\KK(X)\)\nb-equivalent.  For this space~\(X\), we enrich filtrated \(\K\)\nb-theory by another \(\K\)\nb-theory functor to a complete invariant up to \(\KK(X)\)\nb-equivalence that satisfies a Universal Coefficient Theorem.
\end{abstract}

\thanks{The second author was supported by the German Research Foundation (Deutsche Forschungsgemeinschaft (DFG)) through the Institutional Strategy of the University of G\"ottingen.}
\maketitle

\section{Introduction}
\label{sec:intro}

\subsection{The UCT-problem}

One of the main problems in the theory of \(\Cst\)-algebras is the computation of the equivariant \(\KK\)-theory of \(\Cst\)-algebras endowed with some extra structure.  Here we apply the general techniques developed in \cites{Meyer-Nest:Homology_in_KK, Meyer:Homology_in_KK_II} to the case of \(\Cst\)\nb-algebras with a non-trivial ideal lattice.  The appropriate version of \(\KK\)-theory is Kirchberg's generalisation of Kasparov theory to \(\Cst\)\nb-algebras over non-Hausdorff topological spaces (see~\cite{Kirchberg:Michael}).  Our goal is to compute it in terms of more manageable \(\K\)\nb-theoretic information, generalising the usual Universal Coefficient Theorem that computes Kasparov's original theory for \(\Cst\)\nb-algebras in the bootstrap class by an exact sequence
\begin{equation}
  \label{eq:UCT simple}
  \Ext\bigl(\K_{*+1}(A),\K_*(B)\bigr) \into
  \KK_*(A,B) \prto
  \Hom\bigl(\K_*(A),\K_*(B)\bigr).
\end{equation}

The generalisation of the bootstrap class to the case of \(\Cst\)\nb-algebras with non-trivial ideal lattice was introduced and studied in~\cite{Meyer-Nest:Bootstrap}.  Let us first recall some of the notation from~\cite{Meyer-Nest:Bootstrap}.  Let~\(X\) be a (usually non-Hausdorff) topological space.  A \(\Cst\)\nb-algebra over~\(X\) is a \(\Cst\)\nb-algebra~\(A\) endowed with a continuous map \(\Prim(A)\to X\).  Let \(\Cstarcat(X)\) be the category of \(\Cst\)\nb-algebras over~\(X\); the morphisms in \(\Cstarcat(X)\) are given by \(X\)\nb-equivariant (in obvious sense) \Star{}homomorphisms.  Taking Kirchberg's \(\KK\)-groups as morphisms and the same objects, we get the category \(\KKcat(X)\).  It has a structure of a triangulated category (see~\cite{Meyer-Nest:Bootstrap}).  For finite~\(X\), the bootstrap class \(\Bootstrap(X)\) is defined as the smallest subcategory of \(\KKcat(X)\) that is closed under suspension, isomorphism, exact triangles, and direct sums and contains all objects with underlying \(\Cst\)\nb-algebra~\(C\).

General methods from homological algebra suggest to study a homology theory~\(H_*\) for \(\Cst\)\nb-algebras over~\(X\), taking values in some Abelian category~\(\Cat\).  Under some mild assumptions, the machinery developed in \cites{Meyer-Nest:Homology_in_KK, Meyer:Homology_in_KK_II} yields an Adams type spectral sequence which abuts to \(\KK(X;\blank,\blank)\), with an \(E_2\)\nb-term expressed in terms of~\(H_*\).

For classification purposes, we need, instead of a spectral sequence, a short exact sequence of the type~\eqref{eq:UCT simple}:
\begin{equation}
  \label{eq:UCT_generic}
  \Ext_\Cat\bigl(H_{*+1}(A),H_*(B)\bigr) \into
  \KK_*(X;A,B) \prto
  \Hom_\Cat\bigl(H_*(A),H_*(B)\bigr),
\end{equation}
and a precise description of the range of~\(H_*\).

In this case, given two \(\Cst\)\nb-algebras \(A\) and~\(B\) over~\(X\) that belong to the bootstrap class, an isomorphism of \(H_*(A)\) to \(H_*(B)\) lifts to a \(\KK(X)\)-equivalence between \(A\) and~\(B\).  The results of Eberhard Kirchberg then allow to lift this \(\KK(X)\)-equivalence to a \Star{}isomorphism \(A\cong B\), provided \(A\) and~\(B\) are tight, purely infinite, stable, nuclear and separable; here \emph{tightness} means that the maps \(\Prim(A)\to X\) and \(\Prim(B)\to X\) are homeomorphisms (see~\cite{Kirchberg:Michael}).  It is also shown in~\cite{Meyer-Nest:Bootstrap} that, in the case when~\(X\) is finite, any object of the bootstrap class is \(\KK(X)\)\nb-equivalent to a tight, purely infinite, stable, nuclear, separable \(\Cst\)\nb-algebra over~\(X\).

\emph{Hence the existence of an exact sequence of the form~\eqref{eq:UCT_generic} for all objects of the bootstrap class leads to a complete classification of the tight, purely infinite, stable, nuclear, separable \(\Cst\)\nb-algebras over~\(X\) in terms of their image under the functor~\(H_*\).}

\subsection{Main results}

It is relatively easy to construct filtrations on \(\KKcat\) which produce spectral sequences which converge to \(\KK\)-groups on the bootstrap category and whose \(E_2\)\nb-term involves only the \(\K\)\nb-theory of the quotients \(\K_*(A/J)\) for the ideals~\(J\) corresponding to minimal open subsets of~\(X\); an example is the filtration used in \cite{Meyer-Nest:Bootstrap}*{Section 4.1}.  However, this spectral sequence is not very useful for practical purposes, since it does not degenerate at the \(E_2\)\nb-level.  The second differential involves, in particular, the \(\K\)\nb-theory of various subquotients~\(I/J\) for the ideals \(I\subset J\subset A\) and the associated six-term exact sequences in \(\K\)\nb-theory
\begin{equation}
  \label{eq:six-term_intro 0}
  \begin{gathered}
    \xymatrix{
      \K_0\bigl(I\bigr) \ar[r]& \K_0\bigl(J\bigr) \ar[r]&
      \K_0\bigl(J/I\bigr) \ar[d]\\
      \K_1\bigl(J/I\bigr) \ar[u]& \K_1\bigl(J\bigr) \ar[l]& \K_1\bigl(I\bigr). \ar[l] }
  \end{gathered}
\end{equation}
Also higher differentials do not vanish.

To get a short exact sequence instead, we need to consider more sophisticated homology theories.  The homology theory analysed here is ``filtrated K-theory,'' which is in some sense the second approximation to this spectral sequence.  Roughly speaking, filtrated \(\K\)\nb-theory comprises the \(\K\)\nb-theory of various subquotients together with \emph{all canonical maps} between these groups.  We will make this definition precise later.  The part of it which involves the exact sequences~\eqref{eq:six-term_intro 0} appeared previously in the work of Gunnar Restorff~\cite{Restorff:Classification} for Cuntz--Krieger algebras and of Mikael R{\o}rdam~\cite{Rordam:Classification_extensions} and Alexander Bonkat~\cite{Bonkat:Thesis} for extensions of \(\Cst\)\nb-algebras. The UCT theorem in the case when the ideal structure is given by $I_1\triangleleft I_2\triangleleft A$ was obtained by Gunnar Restorff in his phd-thesis~\cite{Restorff:Thesis}, where he introduced an invariant which is a particular case of filtrated \(\K\)\nb-theory.

In this paper we prove the following

 \begin{theorem}
   The filtrated \(\K\)\nb-theory satisfies the Universal Coefficient Theorem and is a complete invariant for \(\Cst\)\nb-algebras over those finite topological spaces with a totally ordered lattice of open subsets.
 \end{theorem}

 Note that a \(\Cst\)\nb-algebra over a space of the type described in this result is essentially the same as a \(\Cst\)\nb-algebra~\(A\) together with a finite increasing chain of ideals
 \[
 \{0\} = I_0 \triangleleft I_1 \triangleleft I_2 \triangleleft I_3 \triangleleft \dotsb \triangleleft I_{n-1} \triangleleft I_n = A.
 \]

 We will also show that the spectral sequence associated to the filtrated \(\K\)\nb-theory does not collapse in general.  Let \((X,<)\) be the partially ordered set, where \(X=\{1,2,3,4\}\) with the partial order given by \(1,2,3<4\) and no further strict inequalities between \(1,2,3\).  A \(\Cst\)\nb-algebra over this space is a \(\Cst\)\nb-algebra~\(A\) together with an ideal~\(I\) and a decomposition of \(A/I\) into a direct sum of three orthogonal ideals.

\begin{theorem}
  The filtrated \(\K\)\nb-theory over \((X,<)\) does not satisfy the Universal Coefficient Theorem and is not a complete invariant.
\end{theorem}

In fact, we give an explicit example of two \(\Cst\)\nb-algebras \(A\) and~\(B\) over~\(X\) in the bootstrap class that have isomorphic filtrated \(\K\)\nb-theory but are not \(\KK(X)\)\nb-equivalent.

\emph{However, for the particular four-point space~\(X\), we still get a complete invariant and a Universal Coefficient Theorem as in~\eqref{eq:UCT_generic}, by adding another \(\K\)\nb-theory functor to filtrated \(\K\)\nb-theory.}

It is not clear how to construct such an enriched and still computable filtrated \(\K\)\nb-theory for general finite spaces.

\subsection{The general machinery}

Now we explain the general machinery behind our approach.  Let us fix a finite topological space~\(X\).  The first step is the correct definition of filtrated \(\K\)\nb-theory.  The filtrated \(\K\)\nb-theory of a \(\Cst\)\nb-algebra~\(A\) over~\(X\) comprises the \(\Z/2\)\nb-graded Abelian groups \(\K_*\bigl(A(Y)\bigr)\) for all locally closed subsets \(Y\subseteq X\) together with all natural transformations between these groups.  The main issue here is to find \emph{all} natural transformations.  These natural transformations enter in the definition of the target category of the filtrated \(\K\)\nb-theory functor and thus influence the \(\Hom\) and \(\Ext\) terms that we expect in the Universal Coefficient Theorem.

We can guess some of these natural transformations.  If~\(U\) is a relatively open subset of~\(Y\), then \(A(U)\) is an ideal in \(A(Y)\), with quotient \(A(Y)/A(U) = A(Y\setminus U)\).  This \(\Cst\)\nb-algebra extension leads to a \emph{natural} six-term exact sequence
\begin{equation}
  \label{eq:six-term_intro}
  \begin{gathered}
    \xymatrix{
      \K_0\bigl(A(U)\bigr) \ar[r]&
      \K_0\bigl(A(Y)\bigr) \ar[r]&
      \K_0\bigl(A(Y\setminus U)\bigr) \ar[d]\\
      \K_1\bigl(A(Y\setminus U)\bigr) \ar[u]&
      \K_1\bigl(A(Y)\bigr) \ar[l]&
      \K_1\bigl(A(U)\bigr). \ar[l]
    }
  \end{gathered}
\end{equation}
These exact sequences provide three types of natural transformations associated to inclusions of open subsets, restriction to closed subset, and boundary maps.

An obvious source for relations between these natural transformations are morphisms of \(\Cst\)\nb-algebra extensions: since the six-term exact sequences in~\eqref{eq:six-term_intro} are natural, each natural morphism of extensions provides some commuting diagrams, which become relations between our generators.

But do these obvious generators and relations already describe \emph{all} natural transformations?  This turns out to be the case for the spaces studied in this article---both the positive and the negative examples.  Although the authors know no counterexamples, we do not expect this to be so in general.

The starting point for our study of filtrated \(\K\)\nb-theory is that the covariant functors \(A\mapsto \K_*\bigl(A(Y)\bigr)\) are representable, that is, they are of the form \(\KK_*(X;\Repr_Y,A)\) for suitable \(\Cst\)\nb-algebras~\(\Repr_Y\) over~\(X\)---these are the representing objects.  Our construction of~\(\Repr_Y\) yields commutative \(\Cst\)\nb-algebras, consisting of \(\CONT_0\)\nb-functions on suitable locally closed subspaces of the order complex of the partial order on~\(X\).  The Yoneda Lemma tells us that natural transformations from \(\K_*\bigl(A(Y)\bigr)\) to \(\K_*\bigl(A(Z)\bigr)\) correspond to \(\KK_*(X;\Repr_Z,\Repr_Y) \cong \K_*\bigl(\Repr_Y(Z)\bigr)\).  These groups are easy enough to compute in the examples we consider, and turn out to be definable by the concrete generators and relations mentioned above.

The natural transformations acting on filtrated \(\K\)\nb-theory form a \(\Z/2\)-graded pre-additive category~\(\Nattrafo\).  A (countable) module over~\(\Nattrafo\) is, by definition, an additive functor from~\(\Nattrafo\) to the category of (countable) \(\Z/2\)-graded Abelian groups.  By construction, the filtrated \(\K\)\nb-theory of any \(\Cst\)\nb-algebra over~\(X\) is such a countable module.  Let~\(\Cat\) be the category of countable \(\Nattrafo\)\nb-modules.  This is an Abelian category, and filtrated \(\K\)\nb-theory is a stable homological functor~\(\FK\) from the Kasparov category \(\KKcat(X)\) of \(\Cst\)\nb-algebras over~\(X\) to~\(\Cat\).

It is easy to check that the functor \(\FK\colon \KKcat(X)\to\Cat\) is \emph{universal} in the notation of~\cite{Meyer-Nest:Homology_in_KK}.  General results on homological ideals in triangulated categories now produce a cohomological spectral sequence that converges towards \(\KK_*(X;A,B)\) if~\(A\) belongs to the bootstrap class; its \(E^2\)\nb-term involves \(\Ext^p_\Cat\bigl(\FK(A), \FK(B)\bigr)\).

The main issue is whether the \(\Ext\)-groups \(\Ext^p_\Cat\bigl(\FK(A), \FK(B)\bigr)\) with \(p\ge2\) vanish, so that our spectral sequence degenerates to an exact sequence of the desired form.  This amounts to checking whether \(\FK(A)\) has a projective resolution of length~\(1\) in~\(\Cat\).

Already for the non-Hausdorff two-point space considered in \cites{Rordam:Classification_extensions, Bonkat:Thesis}, the category~\(\Cat\) has infinite cohomological dimension, that is, there are objects that admit no projective resolution of finite length.  But these objects do not belong to the range of the functor~\(\FK\).  If an \(\Nattrafo\)\nb-module~\(A\) belongs to the range of~\(\FK\), then there are exact sequences
\begin{equation}
  \label{eq:exactness_intro}
  \dotsb \to A(U) \to A(Y) \to A(Y\setminus U) \to A(U) \to
  \dotsb
\end{equation}
for any \(Y\in\Loclo(X)\), \(U\in\Loclo(Y)\) because of~\eqref{eq:six-term_intro}.  But there are \(\Nattrafo\)\nb-modules without finite length projective resolutions.  For totally ordered spaces, an object of~\(\Cat\) has a projective resolution of length~\(1\) if and only if it has a projective resolution of finite length, if and only if the sequences~\eqref{eq:exactness_intro} are exact, if and only if it is the filtrated \(\K\)\nb-theory of some separable \(\Cst\)\nb-algebra over~\(X\), which we can take in the bootstrap class.

For the four-point counterexample considered in Section~\ref{sec:counterexample}, we first find a torsion-free exact module that is not projective, and then use it to find an exact module without projective resolutions of length~\(1\).  Then we find two non-isomorphic objects of the bootstrap class with the same filtrated \(\K\)\nb-theory.  The idea here is to consider a certain exact triangle \(\Sigma C \to A\to B\to C\), which splits on the level of filtrated \(\K\)\nb-theory, so that \(A\oplus C\) and~\(B\) have the same filtrated \(\K\)\nb-theory.  But we can prove in our concrete example that \(A\oplus C\) and~\(B\) are not \(\KK(X)\)\nb-equivalent.

A \(\Cst\)\nb-algebra over the four-point space~\(X\) is a \(\Cst\)\nb-algebra~\(A\) with a distinguished ideal~\(I\) and a direct sum decomposition of~\(A/I\) as a direct sum of three orthogonal ideals.  Since both direct sums and extensions of \(\Cst\)\nb-algebras can be classified by filtrated \(\K\)\nb-theory, it is remarkable that the combination of both provides a counterexample.  Incidentally, the space~\(X^\op\) that corresponds to a \(\Cst\)\nb-algebra~\(A\) with a distinguished ideal~\(I\) and a direct sum decomposition of~\(I\) as a direct sum of three orthogonal ideals also leads to a counterexample in a similar fashion.

For the four-point space~\(X\) above, there is essentially just one module that ought to be projective but is not.  We can add another invariant to filtrated \(\K\)\nb-theory that corresponds to this offending module.  Since this changes our whole category, it may lead to further offending modules, which would have to be added in a second step, and this could, in principle, go on forever.  But in the concrete case at hand, we get projective resolutions of length~\(1\) for all modules over the enriched filtrated \(\K\)\nb-theory.  As a result, the enriched filtrated \(\K\)\nb-theory classifies objects of the bootstrap class over~\(X\) up to \(\KK(X)\)-equivalence, and it classifies purely infinite separable nuclear stable \(\Cst\)\nb-algebras with primitive ideal space~\(X\) and simple subquotients in the bootstrap class.

\subsection{Some basic notation}
\label{sec:notation}

We shall use the following notation from~\cites{Meyer-Nest:Bootstrap}:
\begin{enumerate}[leftmargin=*, widest=\(\Cstarsep(X)\)]
\item[\(\inOb\)] we write \(x\inOb\Cat\) for objects of a category~\(\Cat\) as opposed to morphisms;

\item[\(X\)] topological space, often assumed \emph{sober} (see~\cite{Vickers:Topology_logic});

\item[\(\Open(X)\)] set of open subsets of~\(X\), partially ordered by~\(\subseteq\);

\item[\(\Loclo(X)\)] set of locally closed subsets of~\(X\);

\item[\(\Loclo(X)^*\)] set of connected, non-empty locally closed subsets of~\(X\);

\item[\(\preceq\)] specialisation preorder on~\(X\), defined by \(x\preceq y\) \(\iff\) \(\cl{\{x\}} \subseteq \cl{\{y\}}\)

\item[\(A\)] \(\Cst\)\nb-algebra;

\item[\(\Prim(A)\)] primitive ideal space of~\(A\) with hull--kernel topology;

\item[\(\Ideals(A)\)] set of closed \Star{}ideals in~\(A\), partially ordered by~\(\subseteq\);

\item[\(\Cstarcat(X)\)] category of \(\Cst\)\nb-algebras over~\(X\) with \(X\)\nb-equivariant \Star{}homomorphisms

\item[\(\Cstarsep(X)\)] full subcategory of separable \(\Cst\)\nb-algebras over~\(X\);

\item[\(\KKcat(X)\)] Kasparov category of \(\Cst\)\nb-algebras over~\(X\): its objects are separable \(\Cst\)\nb-algebras over~\(X\), its set of morphisms from~\(A\) to~\(B\) is \(\KK_0(X;A,B)\);

\item[\(\Bootstrap(X)\)] the bootstrap class in \(\KKcat(X)\);

\item[\(i_Y^X\)] extension functor \(\Cstarcat(Y)\to\Cstarcat(X)\) or \(\KKcat(Y)\to\KKcat(X)\) for a subset \(Y\subseteq X\);

\item[\(i_x\)] abbreviation for \(i_{\{x\}}^X\) for \(x\in X\);

\item[\(r_X^Y\)] restriction functor \(\Cstarcat(X)\to\Cstarcat(Y)\) or \(\KKcat(X)\to\KKcat(Y)\) for a locally closed subset \(Y\subseteq X\);

\item[\(\Sigma\)] suspension \(\Sigma A\defeq \CONT_0(\R,A)\).
\end{enumerate}

Roughly speaking, a space is sober if it can be recovered from the lattice \(\Open(X)\).  It is explained in \cite{Meyer-Nest:Bootstrap}*{\S2.5} why we may restrict attention to such spaces.  For finite spaces, sobriety is equivalent to the separation axiom~\(\textup T_0\), that is, two points are equal once they have the same closure.

A \emph{\(\Cst\)\nb-algebra over~\(X\)} is pair \((A,\psi)\) consisting of a \(\Cst\)\nb-algebra~\(A\) and a continuous map \(\psi\colon \Prim(A)\to X\).  If~\(X\) is sober, this is equivalent to a map
\[
\psi^*\colon \Open(X)\to\Ideals(A),
\qquad
U\mapsto A(U),
\]
that preserves finite infima and arbitrary suprema, that is,
\[
A\biggl(\bigcap_{U\in F} U\biggr) = \bigcap_{U\in F} A(U),
\qquad
A\biggl(\bigcup_{U\in S} U\biggr) = \bigvee_{U\in S} A(U)
= \cl{\sum_{U\in S} A(U)},
\]
where \(F\subseteq \Open(X)\) is finite and \(S\subseteq \Open(X)\) is arbitrary.  In particular, this implies \(A(\emptyset)=\{0\}\), \(A(X)=A\), and the monotonicity condition \(A(U)\triangleleft A(V)\) for \(U\subseteq V\).

A \Star{}homomorphism \(f\colon A\to B\) between two \(\Cst\)\nb-algebras over~\(X\) is \emph{\(X\)\nb-equivariant} if \(f\bigl(A(U)\bigr)\subseteq B(U)\) for all \(U\in\Open(X)\).

A subset \(Y\subseteq X\) is \emph{locally closed} if and only if \(Y=U\setminus V\) for open subsets \(V,U\in\Open(X)\) with \(V\subseteq U\).  Then we define \(A(Y)\defeq A(U)/A(V)\) for a \(\Cst\)\nb-algebra~\(A\) over~\(X\); this does not depend on the choice of \(U\) and~\(V\) by \cite{Meyer-Nest:Bootstrap}*{Lemma 2.15}.

If \(Y\subseteq X\) is locally closed and~\(A\) is a \(\Cst\)\nb-algebra over~\(Y\), then we extend~\(A\) to a \(\Cst\)\nb-algebra \(i_Y^XA\) over~\(X\) by \(i_Y^XA(Z) \defeq A(Y\cap Z)\) for \(Z\in\Loclo(X)\).  Conversely, we can restrict a \(\Cst\)\nb-algebra~\(B\) over~\(X\) to a \(\Cst\)\nb-algebra \(r_X^Y(B)\) over~\(Y\) by \(r_X^Y B(Z)\defeq B(Z)\) for all \(Z\in\Loclo(Y)\subseteq\Loclo(X)\).

The category \(\KKcat(X)\) is triangulated, with exact triangles coming either from mapping cone triangles of \(X\)\nb-equivariant \Star{}homomorphisms or, equivalently, from semi-split \(\Cst\)\nb-algebra extensions over~\(X\) (see \cites{Meyer-Nest:BC, Meyer-Nest:Bootstrap}).  Here an extension is called \emph{semi-split} if it splits by an \(X\)\nb-equivariant completely positive contraction.

The \emph{bootstrap class} \(\Bootstrap(X)\) is the localising subcategory of \(\KKcat(X)\) generated by the objects \(i_x\C\) for all \(x\in X\).  That is, it is the smallest class of objects containing these generators that is closed under suspensions, \(\KK(X)\)-equivalence, semi-split extensions, and countable direct sums.

\section{Filtrated \texorpdfstring{$\K$}{K}-theory}
\label{sec:filtrated_K}

Let~\(X\) be a finite topological space.  We do not discuss filtrated \(\K\)\nb-theory for \(\Cst\)\nb-algebras over infinite spaces here.

\begin{definition}
  \label{def:FK_Y}
  For a locally closed subset \(Y\subseteq X\), we define a functor
  \[
  \FK_Y\colon \KKcat(X)\to\Ab^{\Z/2},\qquad
  \FK_Y(A) \defeq \K_*\bigl(A(Y)\bigr).
  \]
  Here \(\Ab\) denotes the category of Abelian groups and \(\Ab^{\Z/2}\) denotes the category of \(\Z/2\)-graded Abelian groups.
\end{definition}

For each \(Y\in\Loclo(X)\), the functor~\(\FK_Y\) is \emph{stable} and \emph{homological}, that is, it intertwines the suspension on \(\KKcat(X)\) with the translation functor on \(\Ab^{\Z/2}\) (this functor shifts the grading), and if \(\Sigma C \to A\to B\to C\) is an exact triangle in \(\KKcat(X)\)---this may, for instance, come from a semi-split extension \(A\into B\prto C\)---then \(\FK_Y(A) \to \FK_Y(B)\to \FK_Y(C)\) is an exact sequence in \(\Ab^{\Z/2}\).

The functors~\(\FK_Y\) together form the filtrated \(\K\)\nb-theory functor.  But the latter also includes its target category, which we now define in a rather abstract way.

\begin{definition}
  \label{def:Nattrafo}
  For \(Y,Z\in\Loclo(X)\), let \(\Nattrafo_*(Y,Z)\) be the \(\Z/2\)-graded Abelian group of all natural transformations \(\FK_Y\Rightarrow \FK_Z\).  The composition of natural transformations provides a product
  \[
  \Nattrafo_i(Y,Z) \times \Nattrafo_j(W,Y)
  \to \Nattrafo_{i+j}(W,Z),
  \qquad f,g\mapsto f\circ g,
  \]
  which is associative and additive in each variable.

  We let~\(\Nattrafo\) be the \(\Z/2\)-graded category whose object set is~\(\Loclo\) and whose morphism space \(Y \to Z\) is \(\Nattrafo_*(Y,Z)\).  The Abelian group structure on these morphism spaces turns this into a pre-additive category.
\end{definition}

\begin{definition}
  \label{def:Nattrafo_module}
  A \emph{module} over~\(\Nattrafo\) is a grading preserving, additive functor \(G\colon \Nattrafo\to\Ab^{\Z/2}\).  That is, it consists of a family of \(\Z/2\)-graded Abelian groups \(G_Y=(G_{Y,0},G_{Y,1})\) for \(Y\in\Loclo(X)\) and product maps
  \[
  \Nattrafo_i(Y,Z) \times G_{Y,j}\to G_{Z,i+j}
  \]
  for all \(Y,Z\in\Loclo(X)\), \(i,j\in\Z/2\); these product maps are associative, additive in each variable, and the identity transformations in \(\Nattrafo(Y,Y)\) act identically on~\(G_Y\) for all \(Y\in\Loclo(X)\).

  Let \(\Mod{\Nattrafo}\) be the category of \(\Nattrafo\)\nb-modules.  The morphisms in \(\Mod{\Nattrafo}\) are the natural transformations of functors or, equivalently, families of grading preserving group homomorphisms \(G_Y\to G'_Y\) that commute with the actions of~\(\Nattrafo\).  Let \(\CMod{\Nattrafo}\) be the full subcategory of countable modules.
\end{definition}

By construction, the natural transformations \(\FK_Y\Rightarrow\FK_Z\) in \(\Nattrafo_*(Y,Z)\) induce maps \(\FK_Y(A)\to\FK_Z(A)\) for all \(A\inOb\KKcat(X)\).  This turns \(\bigl(\FK_Y(A)\bigr)_{Y\in\Loclo(X)}\) into a module over~\(\Nattrafo\).  Furthermore, it is well-known that the \(\K\)\nb-theory of separable \(\Cst\)\nb-algebras such as \(A(Y)\) for \(A\inOb\KKcat(X)\) is countable.

\begin{definition}
  \label{def:filtrated_K}
  \emph{Filtrated \(\K\)\nb-theory} is the functor
  \[
  \FK=(\FK_Y)_{Y\in\Loclo(X)}\colon
  \KKcat(X) \to \CMod{\Nattrafo},
  \qquad
  A \mapsto
  \Bigl(\K_*\bigl(A(Y)\bigr)\Bigr)_{Y\in\Loclo(X)}.
  \]
\end{definition}

The target category \(\CMod{\Nattrafo}\) is an important part of this definition because we will compute groups of morphisms and extensions in this category.

Since \(A(\emptyset)=\{0\}\) for all \(\Cst\)\nb-algebras over~\(X\), we have \(\FK_\emptyset=0\), so that~\(\emptyset\) is a zero object of~\(\Nattrafo\).  Therefore, \(G_\emptyset\) vanishes for any \(\Nattrafo\)\nb-module.

If \(Y\in\Loclo(X)\) is not connected, that is, \(Y=Y_1\sqcup Y_2\) with two disjoint relatively open subsets \(Y_1,Y_2\in\Open(Y)\subseteq \Loclo(X)\), then \(A(Y) \cong A(Y_1)\oplus A(Y_2)\) for any \(\Cst\)\nb-algebra~\(A\) over~\(X\).  Hence \(\FK_Y(A) \cong \FK_{Y_1}(A)\times \FK_{Y_2}(A)\).  The natural transformations that implement this natural isomorphism correspond to a direct sum diagram \(Y\cong Y_1\oplus Y_2\) in~\(\Nattrafo\).  Therefore, any \(\Nattrafo\)\nb-module has \(G_Y\cong G_{Y_1}\oplus G_{Y_2}\); here we use the fact that a functor that is additive on morphisms is also additive on objects, even if the category in question is only pre-additive.

Since~\(X\) is finite, any locally closed subset is a disjoint union of its connected components.  This corresponds to a direct sum decomposition \(Y \cong \bigoplus_{j\in\pi_0(Y)} Y_j\) in~\(\Nattrafo\).  Therefore, we lose no information when we replace \(\Loclo(X)\) by the subset \(\Loclo(X)^*\) of non-empty, connected, locally closed subsets.

\subsection{The representability theorem}
\label{sec:representability}

The representability theorem serves two purposes.  We will first use it to describe the category~\(\Nattrafo\).  Later, we use it to construct geometric resolutions in \(\KKcat(X)\).

\begin{theorem}
  \label{the:representability}
  Let~\(X\) be a finite topological space.  The covariant functors \(\FK_Y\) for \(Y\in\Loclo(X)\) are representable, that is, there are objects \(\Repr_Y\inOb\KKcat(X)\) and natural isomorphisms
  \[
  \KK_*(X;\Repr_Y,A) \cong \FK_Y(A) = \K_*\bigl(A(Y)\bigr)
  \]
  for all \(A\inOb\KKcat(X)\), \(Y\in\Loclo(X)\).
\end{theorem}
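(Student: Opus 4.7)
The plan is to reduce to the case of open subsets and proceed by induction on $|U|$. First, for a general locally closed $Y = U \setminus V$ with $V \subseteq U$ open in $X$, I would construct $\Repr_Y$ assuming $\Repr_U$ and $\Repr_V$ are already available. The $X$\nb-equivariant inclusion of ideals $A(V) \into A(U)$, natural in $A$, gives a natural transformation $\FK_V \Rightarrow \FK_U$ of functors on $\KKcat(X)$, which by the Yoneda lemma and the induction hypothesis corresponds to a morphism $\alpha \colon \Repr_U \to \Repr_V$ in $\KKcat(X)$. I take $\Repr_Y$ to be a mapping cone of $\alpha$, possibly suspended, so that the resulting exact triangle yields a long exact sequence under $\KK_*(X;-,A)$ whose three terms $\KK_*(X;\Repr_Y,A)$, $\FK_U(A)$, $\FK_V(A)$ fit together exactly as in the six-term exact sequence of the extension $A(V) \into A(U) \prto A(Y)$. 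The five-lemma then produces the desired natural isomorphism $\KK_*(X;\Repr_Y,A) \cong \FK_Y(A)$.

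It therefore remains to construct $\Repr_U$ for every open $U \subseteq X$, which I would do by induction on $|U|$. For the base case I take $U = U_x := \{y \in X : x \preceq y\}$ principal and claim $\Repr_{U_x} := i_x\C$: an $X$\nb-equivariant \Star{}homomorphism $f \colon i_x\C \to A$ is determined by $f(1) \in A$, which by $X$\nb-equivariance must lie in $A(V)$ for every open $V \ni x$, hence in $\bigcap_{V \ni x} A(V) = A(U_x)$; passing to $\KK$ yields $\KK_*(X; i_x\C, A) \cong \K_*(A(U_x))$. For the inductive step, let $U$ be a non-principal open. Then $U$ has at least two minimal elements; pick one, say $x$, and observe $U_x \subsetneq U$. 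Set $U' := U \setminus \{x\}$, which is again open in $X$ and strictly smaller than $U$. By the inductive hypothesis I have $\Repr_{U'}$ and $\Repr_{U_x}$, and applying the general construction of the first paragraph to the locally closed set $\{x\} = U_x \setminus (U_x \setminus \{x\})$ yields $\Repr_{\{x\}}$. The extension $A(U') \into A(U) \prto A(\{x\})$ produces a natural boundary $\FK_{\{x\}} \Rightarrow \FK_{U'}[1]$, corresponding by Yoneda to a morphism $\delta \colon \Sigma \Repr_{U'} \to \Repr_{\{x\}}$ in $\KKcat(X)$; I define $\Repr_U$ by completing $\delta$ to an exact triangle and apply the five-lemma as before.

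The hardest step will be organising the induction so that it is genuinely well-founded. Each recursive call reduces the size of the open in question, either via $U' \subsetneq U$ or via $U_x \subsetneq U$, the latter requiring non-principality of $U$ (equivalently, that $U$ has at least two minimal elements). A secondary subtlety is the naturality in $A$ of the five-lemma isomorphism: this is automatic because every map in both long exact sequences is natural in $A$ and the isomorphism is uniquely characterised by commutativity, but I would want to spell it out. Finally, to get a well-defined assignment $Y \mapsto \Repr_Y$, I would verify that different choices made during the induction (which minimal $x$, which decomposition $Y = U \setminus V$) yield canonically isomorphic $\Repr_Y$ by the uniqueness of mapping cones in triangulated categories.
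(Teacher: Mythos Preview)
Your inductive skeleton---base case $U_x$ via $i_x\C$, reduction of locally closed subsets to open ones, and a two-out-of-three argument using exact triangles---mirrors the paper's proof in \S\ref{sec:proof_representability}.  The paper also starts from $\KK_*(X;i_x\C,B)\cong\K_*\bigl(B(U_x)\bigr)$ (invoking the adjunction \cite{Meyer-Nest:Bootstrap}*{Proposition~3.12} rather than your \Star{}homomorphism count, which by itself does not pass to $\KK$), and then propagates goodness along extensions $\Repr_{Y\setminus U}\into\Repr_Y\prto\Repr_U$ exactly as you propose.

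The genuine gap is your invocation of the five-lemma.  The five-lemma does not manufacture the middle isomorphism; it requires a comparison map $\KK_*(X;\Repr_Y,A)\to\FK_Y(A)$ already sitting in a commuting ladder, and only then tells you it is bijective.  Two six-term exact sequences that agree on four terms and on the map between those terms need \emph{not} have isomorphic remaining terms: for instance, with $A_*=B_*=\Z/2$ concentrated in opposite parities and zero maps $A\to B$, the third term can be either $\Z/4$ or $\Z/2\oplus\Z/2$.  The paper avoids this by first building the $\Repr_Y$ \emph{explicitly} out of the order complex $\Ch(X)$ and singling out a canonical class $\xi_Y\in\K_0\bigl(\Repr_Y(Y)\bigr)$ (the trivial line bundle).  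This $\xi_Y$ produces the natural transformation $f\mapsto f_*(\xi_Y)$ in advance, Lemma~\ref{lem:translate_trafo} checks it is compatible with the extension maps, and only then does the five-lemma apply.  Your abstract cone construction gives no such distinguished element of $\K_0\bigl(\Repr_Y(Y)\bigr)$, so the ladder is missing its middle rung.

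A secondary point: the explicit order-complex model is not merely a convenience for the five-lemma.  It is what makes the natural transformations computable via~\eqref{eq:nattrafo} as $\Nattrafo_*(Y,Z)\cong\K^*\bigl(S(Y,Z)\bigr)$, which drives all later calculations in the paper.  Your mapping-cone $\Repr_Y$ would establish representability but leave $\Nattrafo$ opaque.  Finally, your remark that different inductive choices give ``canonically isomorphic'' $\Repr_Y$ by uniqueness of cones overstates matters: cones in a triangulated category are unique only up to \emph{non}-canonical isomorphism, which suffices for existence but not for a well-defined assignment.
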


Before we prove this theorem in~\S\ref{sec:proof_representability}, we first describe the representing objects~\(\Repr_Y\) explicitly, and we use this to describe the groups of natural transformations \(\Nattrafo_*(Y,Z)\) as \(\K\)\nb-theory groups of certain locally compact spaces.

The construction of~\(\Repr_Y\) requires some preparation.  We equip~\(X\) with the specialisation preorder~\(\preceq\) as in \cite{Meyer-Nest:Bootstrap}*{\S2.7}; recall that \(x\preceq y\) if and only if \(\cl{\{x\}}\subseteq \cl{\{y\}}\).  Since the topological space~\(X\) is finite, it carries the Alexandrov topology of the preorder~\(\preceq\), that is, a subset \(Y\subseteq X\) is open if and only if \(x\succeq y\in Y\) implies \(x\in Y\).  Similarly, \(Y\subseteq X\) is closed if and only if \(x\preceq y\in Y\) implies \(x\in Y\), and locally closed if and only if \(x\preceq y\preceq z\) and \(x,z\in Y\) implies \(y\in Y\).

\begin{definition}
  \label{def:order_complex}
  Let \((X,\preceq)\) be a partially ordered set.  Its \emph{order complex} is the geometric realisation of the simplicial set \(\Ch(X)\) whose \(n\)\nb-simplices are the \emph{chains} \(x_0\preceq x_1\preceq\dotsb\preceq x_n\) in~\(X\) and whose face and degeneracy maps delete or double an entry of the chain.
\end{definition}

Equivalently, \(\Ch(X)\) is the classifying space of the thin category that has object set~\(X\) and a morphism \(x\to y\) whenever \(x\preceq y\).

The order complex is the main ingredient in the construction of the representing objects~\(\Repr_Y\) for \(Y\in\Loclo(X)\).

The \emph{non-degenerate} \(n\)\nb-simplices in \(\Ch(X)\) are the \emph{strict} chains \(x_0\prec\dotsb\prec x_n\) in~\(X\).  We let~\(S_X\) be the set of all \emph{strict} chains.  For each \(I=(x_0\prec\dotsb\prec x_n)\in S_X\), we let~\(\Delta_I\) be a copy of~\(\Delta_n\); more formally, \(\Delta_I=\{(t,I)\mid t\in\Delta_n\}\).  We also let \(\Delta_I^\circ\subseteq\Delta_I\) be the corresponding open simplex \(\Delta_n\setminus\partial\Delta_n\).

The space \(\Ch(X)\) is obtained from the union \(\coprod_{I\in S_X} \Delta_I\) by identifying~\(\Delta_I\) with the corresponding face in~\(\Delta_J\) whenever \(I,J\in S_X\) satisfy \(I\subseteq J\).  Thus the underlying set of \(\Ch(X)\) is a \emph{disjoint} union
\begin{equation}
  \label{eq:nerve_disjoint_union}
  \Ch(X) = \coprod_{I\in S_X} \Delta_I^\circ.
\end{equation}
For \(I\in S_X\), let \(\min I\) and \(\max I\) be the (unique) minimal and maximal elements in~\(S_X\), respectively.  We define two functions
\[
m,M\colon \Ch(X)\to X
\]
by mapping points in \(\Delta_I^\circ\) to \(\min I\) and \(\max I\), respectively.  This well-defines functions on \(\Ch(X)\) because of~\eqref{eq:nerve_disjoint_union}.

\begin{lemma}
  \label{lem:max_min_on_nerve_continuous}
  If \(Y\subseteq X\) is closed, then \(m^{-1}(Y)\) is open and \(M^{-1}(Y)\) is closed in \(\Ch(X)\).  If \(Y\subseteq X\) is open, then \(m^{-1}(Y)\) is closed and \(M^{-1}(Y)\) is open.  If \(Y\subseteq X\) is locally closed, then \(m^{-1}(Y)\) and \(M^{-1}(Y)\) are locally closed.
\end{lemma}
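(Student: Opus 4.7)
The plan is to argue simplex by simplex. Since~$X$ is finite, $\Ch(X)$ is a finite CW complex whose closed cells are the simplices $\Delta_I$ indexed by $I \in S_X$; a subset of $\Ch(X)$ is open (respectively closed) exactly when its intersection with each $\Delta_I$ is open (respectively closed) in $\Delta_I$. It therefore suffices to analyse $m^{-1}(Y) \cap \Delta_I$ and $M^{-1}(Y) \cap \Delta_I$ for a fixed strict chain $I = (x_0 \prec x_1 \prec \dotsb \prec x_n) \in S_X$.

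Parametrise $\Delta_I$ by barycentric coordinates $(t_0,\dotsc,t_n)$ and, for $p \in \Delta_I$, put $\mathrm{supp}(p) \defeq \{i : t_i > 0\}$, so that $p \in \Delta_J^\circ$ for the subchain $J = (x_i)_{i \in \mathrm{supp}(p)}$. Then by construction $m(p) = x_{\min \mathrm{supp}(p)}$ and $M(p) = x_{\max \mathrm{supp}(p)}$. The key observation I would use is that if $Y \subseteq X$ is closed---that is, a down-set for~$\preceq$---then $S_Y \defeq \{i : x_i \in Y\}$ is a down-set of $\{0,\dotsc,n\}$ under its linear order, because $x_i \preceq x_j$ whenever $i \le j$ along the chain~$I$; hence $S_Y = \{0,\dotsc,k\}$ for some~$k$, or is empty. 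Dually, if~$Y$ is open then $S_Y = \{k,\dotsc,n\}$ or empty.

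With this combinatorial fact in hand the four statements of the lemma become immediate. For $Y$ closed with $S_Y = \{0,\dotsc,k\}$, the condition $m(p) \in Y$ reads $\min\mathrm{supp}(p) \le k$, equivalently $t_0 + \dotsb + t_k > 0$, which is open in~$\Delta_I$; whereas $M(p) \in Y$ reads $\max\mathrm{supp}(p) \le k$, equivalently $t_{k+1} = \dotsb = t_n = 0$, which is closed. The $Y$ open case is symmetric, and the degenerate case $S_Y = \emptyset$ gives empty preimages. Finally the locally closed case reduces to the previous ones by writing $Y = U \cap C$ with $U$ open and $C$ closed in~$X$, so that $m^{-1}(Y) = m^{-1}(U) \cap m^{-1}(C)$ and $M^{-1}(Y) = M^{-1}(U) \cap M^{-1}(C)$ are each the intersection of an open with a closed subset of $\Ch(X)$, hence locally closed.

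There is no serious obstacle here; the only thing to keep straight is that down-sets and up-sets of $\{0,\dotsc,n\}$ for its linear order really are prefixes and suffixes, which forces the chain order on each~$I$ to be used consistently when passing from the preorder on~$X$ to conditions on barycentric coordinates.
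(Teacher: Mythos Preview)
Your proof is correct and follows essentially the same strategy as the paper: both argue cell by cell using the CW structure of~$\Ch(X)$, reduce to analysing how the chain order on each~$I$ interacts with~$Y$, and then obtain the remaining cases by complementation or intersection. The only cosmetic difference is that the paper observes directly that $M^{-1}(Y)$ is a subcomplex when~$Y$ is closed (since $\max I\in Y$ forces $\max J\in Y$ for every $J\subseteq I$), whereas you make the same content explicit in barycentric coordinates; the arguments are interchangeable.
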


\begin{proof}
  First we show that \(M^{-1}(Y)\) is closed if~\(Y\) is closed.  If \(I\in S_X\) satisfies \(\max I\in Y\), then \(\max J\in Y\) for all \(J\subseteq I\) because \(\max J \preceq \max I\in Y\).  Hence \(\Delta_I\subseteq M^{-1}(Y)\) once \(M^{-1}(Y)\cap \Delta_I^\circ\neq\emptyset\), so that \(M^{-1}(Y)\cap \Delta_I\) is closed for all \(I\in S_X\); this implies that \(M^{-1}(Y)\) is closed.

  A similar argument shows that \(m^{-1}(Y)\) is closed in \(\Ch(X)\) if~\(Y\) is open.  Now the remaining assertions follow easily because the maps \(m^{-1}\) and~\(M^{-1}\) commute with complements, unions, and intersections.
\end{proof}

More explicitly, if \(Y\subseteq X\) is open, then \(m^{-1}(Y)\) is the union of the simplices~\(S_X\) for all chains \(x_0\prec x_1\prec\dotsb\prec x_n\) with \(x_0\in Y\) and hence \(x_0,\dotsc,x_n\in Y\).  Thus
\begin{alignat*}{2}
  m^{-1}(Y) &= \Ch(Y)
  &\qquad&
  \text{if \(Y\subseteq X\) is open.}\\\shortintertext{Similarly,}
  M^{-1}(Y) &= \Ch(Y)
  &\qquad&
  \text{if \(Y\subseteq X\) is closed.}
\end{alignat*}
Here we identify \(\Ch(Y)\) with a subcomplex of \(\Ch(X)\) in the obvious way.

Let~\(X^\op\) be~\(X\) with the topology for the reversed partial order~\(\succ\); that is, the open subsets of~\(X^\op\) are the closed subsets of~\(X\), and vice versa.  We may rephrase Lemma~\ref{lem:max_min_on_nerve_continuous} as follows:

\begin{proposition}
  \label{pro:min_max_continuous}
  The map \((m,M)\colon \Ch(X)\to X^\op\times X\) is continuous.
\end{proposition}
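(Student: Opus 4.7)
The plan is to unpack what continuity of $(m,M)$ into a product means and reduce it directly to the preceding lemma.

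First I would observe that a basis for the product topology on $X^\op\times X$ consists of rectangles $U\times V$, where $U$ is open in $X^\op$, that is, \emph{closed} in $X$, and $V$ is open in $X$. Since $(m,M)^{-1}(U\times V) = m^{-1}(U)\cap M^{-1}(V)$, it suffices to check that both factors are open in $\Ch(X)$.

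Now I would simply quote Lemma~\ref{lem:max_min_on_nerve_continuous}: because $U$ is closed in $X$, the set $m^{-1}(U)$ is open in $\Ch(X)$, and because $V$ is open in $X$, the set $M^{-1}(V)$ is open in $\Ch(X)$. Their intersection is therefore open, so $(m,M)$ is continuous.

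The main obstacle is nonexistent here, since all the combinatorial content has already been absorbed into Lemma~\ref{lem:max_min_on_nerve_continuous}. The only point worth emphasising in the write-up is the reversal of topology on the first factor: openness of $U$ in $X^\op$ means closedness of $U$ in $X$, which is precisely the hypothesis under which $m^{-1}(U)$ becomes open; without passing to $X^\op$ the two parts of the lemma would not combine into a single continuity statement.
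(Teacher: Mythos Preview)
Your argument is correct and matches the paper's approach exactly: the paper presents this proposition with no separate proof, introducing it as a direct rephrasing of Lemma~\ref{lem:max_min_on_nerve_continuous}. Your unpacking via basic open rectangles in the product topology and the observation about the reversal of topology on the first factor is precisely the content of that rephrasing.
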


Let
\[
\Repr\defeq \CONT\bigl(\Ch(X)\bigr)
\]
be the \(\Cst\)\nb-algebra of continuous functions on \(\Ch(X)\).  Since
\[
\Prim \Repr = \Prim \CONT\bigl(\Ch(X)\bigr) \cong \Ch(X),
\]
the map \((m,M)\) turns~\(\Repr\) into a \(\Cst\)\nb-algebra over \(X^\op\times X\).  We abbreviate
\[
S(Y,Z) \defeq m^{-1}(Y)\cap M^{-1}(Z)
\subseteq \Ch(X);
\]
this is a locally closed subset of \(\Ch(X)\) by Lemma~\ref{lem:max_min_on_nerve_continuous}

\begin{definition}
  \label{def:Repr_Y}
  We let~\(\Repr_Y\) be the \(\Cst\)\nb-algebra over~\(X\) with
  \[
  \Repr_Y(Z) \defeq \Repr(Y^\op\times Z) = \CONT_0\bigl(S(Y,Z)\bigr)
  \]
  for all \(Y,Z\in\Loclo(X)\); here \(Y^\op\) denotes~\(Y\) with the subspace topology from~\(X^\op\).  Equivalently, we let~\(\Repr_Y\) be the restriction of~\(\Repr\) to \(Y^\op\times X\), viewed as a \(\Cst\)\nb-algebra over~\(X\) via the coordinate projection \(Y^\op\times X\to X\).
\end{definition}

We will prove the Theorem~\ref{the:representability} for this choice of~\(\Repr_Y\) in~\S\ref{sec:proof_representability}.  Taking this for granted, we use the concrete description of~\(\Repr_Y\) to compute the groups of natural transformations.  By the Yoneda Lemma, natural transformations between the functors~\(\FK_Y\) come from morphisms between the representing objects.  More precisely,
\begin{multline}
  \label{eq:nattrafo}
  \Nattrafo_*(Y,Z)
  \cong \KK_*(X;\Repr_Z,\Repr_Y)
  \cong \FK_Z(\Repr_Y)
  = \K_*\bigl(\Repr_Y(Z)\bigr)
  \\ = \K_*\bigl(\Repr(Y^\op \times Z)\bigr)
  = \K^*\bigl(m^{-1}(Y)\cap M^{-1}(Z)\bigr)
  = \K^*\bigl(S(Y,Z)\bigr).
\end{multline}

By the way, the universal property of Kasparov theory says that it makes no difference for the natural transformations \(\FK_Y\Rightarrow\FK_Z\) whether we view these two functors as defined on \(\Cstarsep(X)\) or \(\KKcat(X)\).  But since~\(\Repr_Y\) only represents~\(\FK_Y\) on the level of \(\KKcat(X)\), we get \(\KK_*(X;\Repr_Z,\Repr_Y)\) and not the space of \(X\)\nb-equivariant \Star{}homomorphisms \(\Repr_Z\to\Repr_Y\).

We describe \(S(Y,Z)\) more explicitly using the closure and boundary operations
\begin{alignat*}{2}
  \cl{Z} &\defeq
  \{x\in X\mid \text{there is \(z\in Z\) with
    \(x\preceq z\)}\},
  &\qquad \cl{\partial} Z \defeq \cl{Z}\setminus Z,\\
  \widetilde{Y} &\defeq
  \{x\in X\mid \text{there is \(y\in Y\) with
    \(x\succeq y\)}\},
  &\qquad \widetilde{\partial} Y \defeq \widetilde{Y}\setminus Y.
\end{alignat*}
Of course, \(\cl{Z}\) is the closure of~\(Z\) in~\(X\) and~\(\widetilde{Y}\) is the closure of~\(Y\) in~\(X^\op\).

\begin{lemma}
  \label{lem:describe_Repr_Y_Z}
  If \(Y,Z\in\Loclo(X)\), then
  \[
  S(Y,Z) = \Ch(\widetilde{Y}\cap\cl{Z}) \bigm\backslash
  \bigl(\Ch(\widetilde{Y}\cap\cl{\partial}Z) \cup
  \Ch(\widetilde{\partial}Y\cap \cl{Z})\bigr).
  \]
  In particular,
  \begin{alignat*}{2}
    S(Y,Z) &= \Ch(Y\cap\cl{Z}) \setminus \Ch(Y\cap\cl{\partial}Z)
    &\qquad &\text{if~\(Y\) is open,}\\
    S(Y,Z) &= \Ch(\widetilde{Y}\cap Z) \setminus
    \Ch(\widetilde{\partial}Y\cap Z)
    &\qquad &\text{if~\(Z\) is closed,}\\
    S(Y,Z) &= \Ch(Y\cap Z)
    &\qquad &\text{if~\(Y\) is open and~\(Z\) is closed.}
  \end{alignat*}
\end{lemma}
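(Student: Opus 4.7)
The plan is to unpack both sides as disjoint unions of open simplices indexed by strict chains in~\(X\) and match them directly. From \eqref{eq:nerve_disjoint_union} together with the definitions of \(m\) and~\(M\), the set \(S(Y,Z) = m^{-1}(Y)\cap M^{-1}(Z)\) is the disjoint union of the open simplices \(\Delta_I^\circ\) over all strict chains \(I=(x_0\prec\dotsb\prec x_n)\) with \(x_0\in Y\) and \(x_n\in Z\). For any such chain, \(x_0\preceq x_i\preceq x_n\) forces \(x_i\in\widetilde{Y}\cap\cl{Z}\), so \(S(Y,Z)\subseteq\Ch(\widetilde{Y}\cap\cl{Z})\).

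The technical ingredient that drives the rest is the following observation, which I would record as a small separate step: for \(Y,Z\in\Loclo(X)\), the set \(\widetilde{\partial}Y=\widetilde{Y}\setminus Y\) is open (an up\nb-set) and \(\cl{\partial}Z=\cl{Z}\setminus Z\) is closed (a down\nb-set) in~\(X\). Both follow by a one\nb-line argument from local closedness: if \(x\succeq y\in\widetilde{\partial}Y\) and \(x\in Y\), choosing \(y'\in Y\) with \(y\succeq y'\) gives \(y'\preceq y\preceq x\) with \(y',x\in Y\), forcing \(y\in Y\), a contradiction; and dually for \(\cl{\partial}Z\).

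Given a chain \(I\subseteq\widetilde{Y}\cap\cl{Z}\), the failure of \(I\in S(Y,Z)\) means \(x_0\in\widetilde{\partial}Y\) or \(x_n\in\cl{\partial}Z\). If \(x_n\in\cl{\partial}Z\), then since \(\cl{\partial}Z\) is a down\nb-set every \(x_i\preceq x_n\) lies in \(\cl{\partial}Z\), so \(I\subseteq\widetilde{Y}\cap\cl{\partial}Z\). If \(x_0\in\widetilde{\partial}Y\), then since \(\widetilde{\partial}Y\) is an up\nb-set every \(x_i\succeq x_0\) lies in \(\widetilde{\partial}Y\), so \(I\subseteq\widetilde{\partial}Y\cap\cl{Z}\). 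The reverse inclusions are immediate since any chain in either of these two subsets has top in \(\cl{\partial}Z\) or bottom in \(\widetilde{\partial}Y\), hence is excluded from \(S(Y,Z)\). Combining these equivalences gives the displayed set equality.

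The three special cases are then immediate bookkeeping: if \(Y\) is open, it is itself an up\nb-set, so \(\widetilde{Y}=Y\) and \(\widetilde{\partial}Y=\emptyset\), which kills the second subtracted term; dually, if \(Z\) is closed, \(\cl{Z}=Z\) and \(\cl{\partial}Z=\emptyset\), killing the first; when both hold, only \(\Ch(Y\cap Z)\) survives. The only place with any mathematical content is the small lemma identifying \(\widetilde{\partial}Y\) as an up\nb-set and \(\cl{\partial}Z\) as a down\nb-set; I expect this to be the main (and only) obstacle, and it rests squarely on the hypothesis that \(Y\) and~\(Z\) are locally closed. Everything else is a direct translation between strict chains and open simplices in \(\Ch(X)\).
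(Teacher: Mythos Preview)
Your proof is correct and follows essentially the same route as the paper's: both arguments decompose \(S(Y,Z)\) and the right-hand side into open simplices indexed by strict chains and compare them term by term, with the crucial step being that a chain in \(\widetilde{Y}\cap\cl{Z}\) whose bottom lies in \(\widetilde{\partial}Y\) (respectively whose top lies in \(\cl{\partial}Z\)) must lie entirely in \(\widetilde{\partial}Y\) (respectively \(\cl{\partial}Z\)). The paper phrases this step as ``\(Y\cap\cl{Z}\) is closed in \(\widetilde{Y}\cap\cl{Z}\) and \(\widetilde{Y}\cap Z\) is open in \(\widetilde{Y}\cap\cl{Z}\)'' without further justification, whereas you isolate and prove the equivalent fact that \(\widetilde{\partial}Y\) is an up-set and \(\cl{\partial}Z\) a down-set, making the use of local closedness explicit; this is a minor presentational improvement, not a different argument.
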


\begin{proof}
  Let \(x_0\prec x_1\prec\dotsb\prec x_n\) be a strict chain in~\(X\).  The interior of the corresponding simplex belongs to \(S(Y,Z)\) if and only if \(x_0\in Y\) and \(x_n\in Z\).  This implies \(x_j\in \widetilde{Y}\) and \(x_j\in \cl{Z}\) for all~\(j\), so that the simplex belongs to \(\Ch(\widetilde{Y}\cap\cl{Z})\).  Furthermore, we neither have \(x_j\in\widetilde{\partial}Y\cap\cl{Z}\) for all~\(j\) nor \(x_j\in\widetilde{Y}\cap \cl{\partial}Z\) for all~\(j\) because \(x_0\in Y\) and \(x_n\in Z\).  Thus the simplex belongs neither to \(\Ch(\widetilde{Y}\cap\cl{\partial}Z)\) nor to \(\Ch(\widetilde{\partial}Y\cap \cl{Z})\).  Conversely, if \(x_j\in \widetilde{Y}\cap\cl{Z}\) for all~\(j\) and neither \(x_j\in\widetilde{\partial}Y\cap\cl{Z}\) for all~\(j\) nor \(x_j\in\widetilde{Y}\cap \cl{\partial}Z\) for all~\(j\), then some~\(x_j\) must belong to \(Y\cap\cl{Z}\) and some~\(x_k\) must belong to \(\widetilde{Y}\cap Z\).  Since \(Y\cap\cl{Z}\) is closed in \(\widetilde{Y}\cap\cl{Z}\) and \(\widetilde{Y}\cap Z\) is open in \(\widetilde{Y}\cap\cl{Z}\), this implies \(x_0\in Y\) and \(x_n\in Z\).  This shows that the interior of a simplex belongs to \(S(Y,Z)\) if and only if it is contained in \(\Ch(\widetilde{Y}\cap\cl{Z}) \bigm\backslash \bigl(\Ch(\widetilde{Y}\cap\cl{\partial}Z) \cup \Ch(\widetilde{\partial}Y\cap \cl{Z})\bigr)\).
\end{proof}

Lemma~\ref{lem:describe_Repr_Y_Z} and~\eqref{eq:nattrafo} yield
\[
\Nattrafo_*(Y,Z)
\cong \K^*\bigl(S(Y,Z)\bigr)
\cong \K^*\bigl(\Ch(\widetilde{Y}\cap\cl{Z}),
\Ch(\widetilde{Y}\cap\cl{\partial}Z) \cup
\Ch(\widetilde{\partial}Y\cap \cl{Z})\bigr).
\]
This is the \(\K\)\nb-theory of a finite CW-pair and hence is always finitely generated as an Abelian group.

If~\(C\) is any finite simplicial complex, then its barycentric subdivision is of the form \(\Ch(X)\), where~\(X\) is the partially ordered set of non-degenerate simplices in~\(C\).  Thus \(\Nattrafo_*(X,X) = \K^*(\lvert C\rvert)\), so that any finitely generated Abelian group arises as \(\Nattrafo_*(X,X)\).  As a consequence, special properties of the pre-additive category \(\Nattrafo\) can only be hidden in its \emph{composition}.

When we identify \(\Nattrafo_*(Y,Z) \cong \KK_*(X;\Repr_Z,\Repr_Y)\), then the composition of natural transformations corresponds to the Kasparov composition product.  This gets somewhat obscured when we follow the isomorphisms
\[
\KK_*(X;\Repr_Z,\Repr_Y)
\cong \K_*\bigl(\Repr_Y(Z)\bigr)
= \K^*\bigl(S(Y,Z)\bigr).
\]
To describe the composition of natural transformations in terms of \(\K^*\bigl(S(Y,Z)\bigr)\), we must first lift elements of \(\K^*\bigl(S(Y,Z)\bigr)\) back to \(\KK_*(X;\Repr_Z,\Repr_Y)\) and then compose them.  The lifting requires a formula for the natural isomorphism
\begin{equation}
  \label{eq:trafo_Repr_FK}
  \KK_*(X;\Repr_Y,A) \to \K_*\bigl(A(Y)\bigr)
\end{equation}
that occurs in the Representability Theorem.  By the Yoneda Lemma, any such natural transformation is of the form \(f\mapsto f_*(\xi_Y)\) for a unique
\[
\xi_Y\in\K_0\bigl(\Repr_Y(Y)\bigr) = \K^0\bigl(S(Y,Y)\bigr)
= \K^0\bigl(\Ch(Y)\bigr).
\]
The natural transformation in~\eqref{eq:trafo_Repr_FK} is generated by the class of the \(1\)\nb-dimensional trivial vector bundle over the compact space \(\Ch(Y)\) or, equivalently, the class of the unit element in \(\K_0\bigl(\Repr_Y(Y)\bigr)\).

In the examples we consider later, all natural transformations turn out to be products of obvious ones, coming from the \(\K\)\nb-theory six-term exact sequences~\eqref{eq:six-term_intro}.  To check this, we only have to verify that a given element~\(\alpha\) of \(\KK_*(X;\Repr_Z,\Repr_Y)\) lifts a given element of \(\K^*\bigl(S(Y,Z)\bigr)\).  The isomorphism~\eqref{eq:trafo_Repr_FK} maps~\(\alpha\) to \([\xi_Z]\otimes_{\Repr_Z(Z)} \alpha(Z)\) in \(\K_*\bigl(\Repr_Y(Z)\bigr) = \K^*\bigl(S(Y,Z)\bigr)\), where \(\alpha(Z)\) in \(\KK_*\bigl(\Repr_Z(Z),\Repr_Y(Z)\bigr)\) is obtained from~\(\alpha\) by restriction to~\(Z\).  This product is easy to compute.

To get acquainted with this approach to natural transformations, we compute some important examples.  Let \(Y\in\Loclo(X)\) and \(U\in\Open(Y)\).  Since~\(\Repr\) is a \(\Cst\)\nb-algebra over \(X^\op\times X\), there is an extension
\begin{equation}
  \label{eq:Repr_extension}
  \Repr_{Y\setminus U}\into \Repr_Y\prto \Repr_U
\end{equation}
of \(\Cst\)\nb-algebras over~\(X\).  It contains \(\Cst\)\nb-algebra extensions
\[
\Repr_{Y\setminus U}(Z)\into \Repr_Y(Z)\prto \Repr_U(Z)
\]
for all \(Z\in\Loclo(X)\).  Let \(Z\defeq Y\setminus U\).  The extension~\eqref{eq:Repr_extension} is semi-split in \(\Cstarcat(X)\) and hence has a class in \(\KK_1(X;\Repr_U,\Repr_Z)\) and produces an exact triangle
\begin{equation}
  \label{eq:Repr_extension_triangle}
  \Sigma \Repr_U \to \Repr_Z \to \Repr_Y\to \Repr_U
\end{equation}
in \(\KKcat(X)\).

\begin{lemma}
  \label{lem:translate_trafo}
  The maps in the extension triangle~\eqref{eq:Repr_extension_triangle} correspond to the natural transformations \(\FK_U[1]\Leftarrow\FK_Z\Leftarrow\FK_Y\Leftarrow\FK_U\) in~\eqref{eq:six-term_intro}.
\end{lemma}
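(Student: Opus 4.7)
The plan is to identify each of the three maps in the extension triangle~\eqref{eq:Repr_extension_triangle} with the corresponding natural transformation from~\eqref{eq:six-term_intro} one at a time, using the Yoneda description of the representability isomorphism as \(\phi \mapsto \phi(W)_*(\xi_W)\) recalled just above, together with the functoriality of the restriction functors \(r_X^Y\).

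For the two \(X\)\nb-equivariant \Star{}homomorphisms \(\iota\colon \Repr_Z\into \Repr_Y\) and \(\pi\colon \Repr_Y\prto \Repr_U\), the induced natural transformations \(\iota^*\colon \FK_Y\Rightarrow\FK_Z\) and \(\pi^*\colon \FK_U\Rightarrow\FK_Y\) are given by pre-composition. A key computation is that \(\Repr_U(Z)=0\): a non-degenerate simplex in \(\Ch(X)\) lies in \(S(U,Z)\) if and only if its minimum \(x_0\) lies in~\(U\) and its maximum \(x_n\) in~\(Z\), but then all intermediate \(x_i\) belong to~\(Y\) by local closedness of~\(Y\), hence all to~\(U\) by openness of~\(U\) in~\(Y\), contradicting \(x_n\in Z=Y\setminus U\). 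It follows that both \(\iota(Z)\colon\Repr_Z(Z)\to\Repr_Y(Z)\) and the ideal inclusion \(\Repr_U(U)\into\Repr_U(Y)\) are isomorphisms. A diagram chase on the naturality squares obtained by restricting \(\phi\colon \Repr_Y\to A\) (respectively \(\psi\colon \Repr_U\to A\)) to the relevant subsets then identifies \(\iota^*\) with the natural transformation induced by the quotient \(A(Y)\prto A(Z)\) and \(\pi^*\) with the one induced by the inclusion \(A(U)\into A(Y)\); this uses also that \(\pi(Y)\) is a surjective, hence unit-preserving, \Star{}homomorphism carrying \(\xi_Y\) to the corresponding generator of \(\K_*(\Repr_U(Y))\).

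For the boundary class \(\partial\in\KK_1(X;\Repr_U,\Repr_Z)\), the induced natural transformation \(\partial^*\colon \FK_Z\Rightarrow \FK_U[1]\) is Kasparov product with~\(\partial\). Since \(\partial\) is, by definition, the \(\KK_1\)\nb-class of the semi-split extension~\eqref{eq:Repr_extension}, its restriction to any \(W\in\Loclo(X)\) is the \(\KK_1\)\nb-class of the restricted \(C^*\)\nb-algebra extension \(\Repr_Z(W)\into\Repr_Y(W)\prto\Repr_U(W)\), by naturality of the connecting construction from a semi-splitting. Combining this with the standard fact that the boundary map in the six-term \(K\)\nb-theory exact sequence of an extension is given by Kasparov product with its extension class, and chasing the Yoneda identification, matches \(\partial^*\) with the six-term boundary for \(A(U)\into A(Y)\prto A(Z)\). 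The main obstacle is the bookkeeping for restriction of \(\KK_1\)\nb-classes of semi-split extensions and the matching of sign conventions between the Kasparov connecting class and the \(K\)\nb-theoretic index and exponential maps; however, once the two \Star{}homomorphism arrows are handled, naturality of the Kasparov product together with exactness of both sequences pins \(\partial^*\) down up to a canonical sign, which agrees by direct inspection of the semi-splitting.
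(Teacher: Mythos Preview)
Your treatment of the two \Star{}homomorphism arrows is correct and essentially the same as the paper's: the key observation \(S(U,Z)=\emptyset\) (hence \(\Repr_U(Z)=0\), so \(\iota(Z)\) and \(\Repr_U(U)\hookrightarrow\Repr_U(Y)\) are isomorphisms) is exactly what the paper uses, and your naturality-square chase is a repackaging of the paper's tracking of the trivial-bundle classes \(\xi_U,\xi_Y,\xi_Z\) through the maps.

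For the boundary map, however, there is a genuine gap. If you actually carry out the Yoneda chase you describe, you use naturality of \(\phi\colon\Repr_Z\to A\) with respect to the extension \(\Repr_Z(U)\into\Repr_Z(Y)\prto\Repr_Z(Z)\) to reduce \(\partial^*=\delta_Z^U\) to the single identity
\[
\delta_{\Repr_Z}(\xi_Z) \;=\; \xi_U\otimes\partial(U)
\quad\text{in }\K_1\bigl(\Repr_Z(U)\bigr)=\K^1\bigl(S(Z,U)\bigr),
\]
where the left side is the boundary of \(\xi_Z\) for \(\Repr_Z(U)\into\Repr_Z(Y)\prto\Repr_Z(Z)\) and the right side is the boundary of \(\xi_U\) for \(\Repr_Z(U)\into\Repr_Y(U)\prto\Repr_U(U)\). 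These are boundary maps for two \emph{different} extensions sharing only their ideal, and their agreement is not a formal consequence of naturality of the Kasparov product or of exactness of the two long exact sequences: knowing that two long exact sequences share two out of three maps does not determine the third, even up to a global sign. Your closing appeal to ``direct inspection of the semi-splitting'' does not supply this missing step.

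The paper fills exactly this gap by a concrete geometric computation: it observes that \(\Ch(Y)\) is the join of \(\Ch(U)\) and \(\Ch(Z)\), writes down an explicit continuous map \(f\colon\Ch(Y)\to[0,1]\) with \(f^{-1}(0)=\Ch(U)\), \(f^{-1}(1)=\Ch(Z)\), \(f^{-1}\bigl((0,1)\bigr)=S(Z,U)\), and then uses naturality of the boundary map along~\(f\) to identify both classes above with \(\pm f^*(\delta)\) for the generator \(\delta\in\K^1\bigl((0,1)\bigr)\). Some such explicit comparison is needed; it cannot be replaced by the abstract exactness argument you sketch.
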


\begin{proof}
  The natural transformation \(\mu_U^Y\colon \FK_U\Rightarrow\FK_Y\) in~\eqref{eq:six-term_intro} is induced by the natural \Star{}homomorphism \(j\colon A(U)\to A(Y)\).  For \(A=\Repr_U\), this map is invertible because \(S(U,Y)=S(U,U)=\Ch(U)\).  Hence \(j(\xi_U)\in \K^0\bigl(S(U,Y)\bigr)\) is again the class of the trivial vector bundle on~\(\Ch(U)\); this class corresponds to the natural transformation~\(\mu_U^Y\).  The restriction map \(\Repr_Y\prto\Repr_U\) in~\eqref{eq:Repr_extension} maps \([\xi_Y]\) to \([\xi_U]\)---recall that both \([\xi_Y]\) and \([\xi_U]\) are trivial vector bundles.  Hence the restriction map \(\Repr_Y\prto\Repr_U\) and the natural transformation~\(\mu_Y^Z\) correspond to the same class---the \(1\)\nb-dimensional trivial vector bundle on \(\Ch(U)\)---in \(\K^0\bigl(S(U,Y)\bigr)\).

  Similarly, the natural transformation \(\mu_Y^Z\colon \FK_Y\Rightarrow\FK_Z\) is induced by the natural \Star{}homomorphism \(p\colon A(Y)\prto A(Z)\).  For \(A=\Repr_Y\), this is the restriction \Star{}homomorphism \(\CONT\bigl(\Ch(Y)\bigr) \to \CONT\bigl(\Ch(Z)\bigr)\) because \(S(Y,Y)=\Ch(Y)\) and \(S(Y,Z)=\Ch(Z)\).  Since the restriction of a trivial bundle remains trivial, \(\mu_Y^Z\) corresponds to the trivial \(1\)\nb-dimensional vector bundle on \(S(Y,Z)=\Ch(Z)\).  The embedding \(\Repr_Z\prto\Repr_Y\) restricts to an identity map on~\(Z\) because \(S(Z,Z)=S(Z,Y)=\Ch(Z)\).  Since this maps \([\xi_Z]\) to the trivial bundle, the embedding \(\Repr_Z\prto\Repr_Y\) and~\(\mu_Y^Z\) both correspond to the same class---the \(1\)\nb-dimensional trivial vector bundle on \(\Ch(Z)\)---in \(\K^0\bigl(S(Y,Z)\bigr)\).

  Finally, we study the boundary map \(\delta_Z^U\colon \FK_Z\Rightarrow\FK_U[1]\).  We claim that it corresponds to the class of the extension \(\Repr_Z\into\Repr_Y\prto\Repr_U\) in \(\KK_1(X;\Repr_U,\Repr_Z)\).  To prove this, we use that \(\Ch(Y)\) is the join of the spaces \(\Ch(U)\) and \(\Ch(Z)\), so that there is a continuous map \(f\colon \Ch(Y)\to[0,1]\) whose fibres over \(0\) and~\(1\) are \(\Ch(U)\) and \(\Ch(Z)\), respectively.

  More precisely, let \(x_0\prec x_1\prec\dotsb\prec x_n\) be a strict chain in~\(Y\) and let~\(\xi\) be a point of the corresponding simplex with coordinates \((t_0,\dotsc,t_n)\) with \(t_0+\dotsb+t_n=1\), that is, \(\xi= t_0x_0+\dotsb+t_nx_n\).  Then there is \(j\in\{0,\dotsc,n\}\) with \(x_0,\dotsc,x_j\in U\), \(x_{j+1},\dotsc,x_n\in Z\).  We can, therefore, write \(\xi=t_U\xi_U+t_Z\xi_Z\) with
  \begin{alignat*}{2}
    \xi_U &= \frac{t_0x_0+\dotsb+t_jx_j}{t_U} \in
    \Ch(U),&\qquad
    t_U &= t_0+\dotsb+t_j,\\
    \xi_Z &=
    \frac{t_{j+1}x_{j+1}+\dotsb+t_nx_n}{t_Z} \in
    \Ch(Z),&\qquad
    t_Z &= t_{j+1}+\dotsb+t_n.
  \end{alignat*}
  We define a continuous map \(f\colon \Ch(Y)\to[0,1]\) by \(\xi\mapsto t_Z\).  We have
  \[
  S(U,U)=\Ch(U)=f^{-1}(0),\qquad
  S(Z,Z)=\Ch(Z)=f^{-1}(1)
  \]
  by construction, and hence
  \[
  S(Z,U)=\Ch(Y)\setminus\bigl(\Ch(U)\sqcup\Ch(Z)\bigr) =
  f^{-1}\bigl((0,1)\bigr).
  \]

  Now we can compute some boundary maps.  The boundary map
  \[
  \K^0\bigl(S(Z,Z)\bigr) \cong \K_0\bigl(\Repr_Z(Z)\bigr)
  \to \K_1\bigl(\Repr_Z(U)\bigr) \cong \K^1\bigl(S(Z,U)\bigr)
  \]
  maps the class of the trivial bundle \([\xi_Z]\) to \(f^*(\delta)\), where~\(\delta\) denotes a generator of \(\Z\cong\K^1\bigl((0,1)\bigr)\); this follows from the naturality of the boundary map.  The boundary map
  \[
  \K^0\bigl(S(U,U)\bigr) \cong \K_0\bigl(\Repr_U(U)\bigr)
  \to \K_1\bigl(\Repr_Z(U)\bigr) \cong \K^1\bigl(S(Z,U)\bigr)
  \]
  for the extension \(\Repr_Z\into\Repr_Y\prto\Repr_U\) maps the class of the trivial bundle \([\xi_U]\) to \(-f^*(\delta)\), again by naturality of the boundary map.
\end{proof}

\begin{remark}
  \label{rem:fundamental_classes_Repr}
  The proof also describes the classes in \(\K^0\bigl(S(U,Y)\bigr)\), \(\K^0\bigl(S(Y,Z)\bigr)\), and \(\K^1\bigl(S(Z,U)\bigr)\) that correspond to the natural transformations in~\eqref{eq:six-term_intro}.  The natural transformations \(\FK_U\Rightarrow\FK_Y\) and \(\FK_Y\Rightarrow\FK_Z\) are represented by the classes of the trivial vector bundles over the compact spaces \(S(U,Y)\) and \(S(Y,Z)\); the natural boundary map \(\FK_Z\Rightarrow \FK_U[1]\) is represented by \(f^*(\delta)\) for a generator of \(\K^1\bigl((0,1)\bigr)\).
\end{remark}

\subsection{Proof of Theorem~\ref{the:representability}}
\label{sec:proof_representability}

We check first that the natural transformation \(\KK_*(X;\Repr_Y,A) \to \K_*\bigl(A(Y)\bigr)\) induced by~\(\xi_Y\) is an isomorphism if~\(Y\) is the minimal open subset~\(U_x\) containing some point \(x\in X\).  The adjointness relation
\[
\KK_*(X;i_x(A),B) \cong \KK_*\bigl(A,B(U_x)\bigr)
\]
for all \(B\inOb\KKcat(X)\) established in \cite{Meyer-Nest:Bootstrap}*{Proposition 3.12} yields
\[
\KK_*(X;i_x(\C),B) \cong \KK_*\bigl(\C,B(U_x)\bigr)
= \FK_{U_x}(B),
\]
that is, \(i_x(\C)\) represents \(\FK_{U_x}\).  To check that~\(\Repr_{U_x}\) does so as well, we must show that \(i_x(\C)\) and \(\Repr_{U_x}\) are \(\KK(X)\)-equivalent.

Recall that \(i_x(\C)=(\C,x)\), where~\(x\) denotes the map \(\Prim(\C)\cong \{x\}\xrightarrow{\subseteq} X\), and
\[
i_x(\C)(Z) =
\begin{cases}
  \C & \text{if \(x\in Z\),}\\
  0 & \text{otherwise}
\end{cases}
\]
for all \(Z\in\Loclo(X)\).

Since \(U_x = \{y\in X\mid x\preceq y\}\), the preordered set~\(U_x\) has a minimal point, namely~\(x\).  Therefore, the space \(\Ch(U_x)\) is starlike and hence contractible in a canonical way towards~\(x\).  The path from a point in~\(\Delta_I\) for \(I\in S_{U_x}\) to the base point in~\(\Delta_x\) lies in \(\Delta_{I\cup\{x\}}\).  Since \(\max I\cup\{x\} = \max I\), the contraction preserves the ideals \(\Repr_{U_x}(V)\) for \(V\in\Open(X)\), so that we get a homotopy equivalence between \(\CONT\bigl(\Ch(U_x)\bigr)\) and \(i_x(\C)\) in \(\Cstarcat(X)\).  Thus \(\Repr_{U_x}\) corepresents \(\FK_{U_x}\) as well.  It is easy to see that the natural isomorphism \(\KK_*(X;\Repr_{U_x},\blank) \cong \FK_{U_x}\) is induced by~\(\xi_{U_x}\).

Let \(\Good\subseteq\Loclo(X)\) be the set of all \(Z\in\Loclo(X)\) for which the natural transformation \(\KK_*(X;\Repr_Z,A)\to \FK_Z(A)\) induced by~\(\xi_Z\) is an isomorphism.  We must show \(\Good=\Loclo(X)\).  We have just seen that \(U_x\in \Good\) for all \(x\in X\).

Let \(Y\in\Loclo(X)\) and \(U\in\Open(Y)\); we claim that all three of \(U\), \(Y\), and \(Y\setminus U\) are good once two of them are.  This follows from the Five Lemma because the maps induced by~\(\xi_Z\) for \(Z=U,Y,Y\setminus U\) intertwine the maps in the six-term exact sequences~\eqref{eq:six-term_intro} and
\[
\xymatrix{
  \KK_0(X;\Repr_U,A) \ar[r]&
  \KK_0(X;\Repr_Y,A) \ar[r]&
  \KK_0(X;\Repr_{Y\setminus U},A) \ar[d]\\
  \KK_1(X;\Repr_{Y\setminus U},A) \ar[u]&
  \KK_1(X;\Repr_Y,A) \ar[l]&
  \KK_1(X;\Repr_U,A) \ar[l]
}
\]
for any \(A\inOb\KKcat(X)\); the latter six-term exact sequence is induced by the semi-split extension~\eqref{eq:Repr_extension_triangle}.  The commutativity of the relevant diagrams follows from the computations in the proof of Lemma~\ref{lem:translate_trafo} (which do not depend on Theorem~\ref{the:representability}).

The two-out-of-three property of~\(\Good\) implies:
\[
U,V\in\Open(X),\quad U,V,U\cap V\in\Good
\qquad\Longrightarrow\qquad
U\cup V\in \Good
\]
because \((U\cup V)\setminus U = V\setminus (U\cap V)\).  By induction on the length of~\(U\), this implies that all open subsets of~\(X\) belong to~\(\Good\).  Since any locally closed subset is a difference of two open subsets, we conclude that \(\Good=\Loclo(X)\).  This finishes the proof of Theorem~\ref{the:representability}.

\section{An example}
\label{sec:example_Nattrafo}

In this section, we restrict our attention to a special class of spaces, namely, the spaces \(X=\{1,\dotsc,n\}\) totally ordered by~\(\le\) for \(n\in\N\).  We let
\[
[a,b]\defeq \{x\in X\mid a\le x\le b\}.
\]
for \(a,b\in\Z\).  We equip~\(X\) with the Alexandrov topology, so that the open subsets are \([a,n]\) for all \(a\in X\); the closed subsets are \([1,b]\) with \(b\in X\), and the locally closed subsets are those of the form \([a,b]\) with \(a,b\in X\) and \(a\le b\).  Any locally closed subset of~\(X\) is connected.

\subsection{Computations with the order complex}
\label{sec:compute_order_complex}

Since any subset of~\(X\) is totally ordered, the space \(\Ch([a,b])\) is just a closed simplex of dimension \(b-a\) for any \(b\ge a\).  We denote the corresponding face of \(\Ch(X)\) by \(\Delta_{[a,b]}\).  This is understood to be empty for \(a>b\).

From now on, we let
\[
Y=[a_1,b_1],\qquad
Z=[a_2,b_2],\qquad
\text{with \(1\le a_1\le b_1\le n\) and
  \(1\le a_2\le b_2\le n\).}
\]
Then \(\widetilde{Y} = [a_1,n]\), \(\widetilde{\partial}Y = [b_1+1,n]\), \(\cl{Z} = [1,b_2]\), and \(\cl{\partial}Z = [1,a_2-1]\).  Lemma~\ref{lem:describe_Repr_Y_Z} yields
\[
S(Y,Z) = \Delta_{[a_1, b_2]}
\setminus \bigl(\Delta_{[a_1, a_2-1]}
\cup \Delta_{[b_1+1, b_2]}\bigr).
\]
Now we distinguish three cases:
\begin{enumerate}[label=\emph{Case} \arabic{*}:]
\item If \(a_2\le a_1 \le b_2\le b_1\), then \(S(Y,Z) = \Delta_{[a_1,b_2]}\) is a non-empty closed simplex.  Hence \(\Nattrafo_*(Y,Z) \cong \K^*\bigl(S(Y,Z)\bigr) \cong \Z[0]\) (this means~\(\Z\) in degree~\(0\)).

\item If \(a_2-1\le b_1\), \(a_1<a_2\), and \(b_1<b_2\), then \(S(Y,Z)\) is obtained from a closed simplex by removing two disjoint, non-empty closed faces.  Excision yields \(\Nattrafo_*(Y,Z) \cong \K^*\bigl(S(Y,Z)\bigr) \cong \Z[1]\) (this means~\(\Z\) in degree~\(1\)).

\item In all other cases, \(S(Y,Z)\) is either empty, a difference of two closed simplices, or a difference \(\sigma\setminus(\tau_1\cup\tau_2)\) for two non-empty closed faces \(\tau_1\) and~\(\tau_2\) of a simplex~\(\sigma\) that intersect.  Then \(\tau_1\cup\tau_2\) and~\(\sigma\) are both contractible, so that \(\Nattrafo_*(Y,Z) \cong \K^*\bigl(S(Y,Z)\bigr) \cong 0\).
\end{enumerate}

Summing up, we get
\begin{equation}
  \label{eq:Nattrafo_example}
  \Nattrafo_*(Y,Z) =
  \begin{cases}
    \Z[0] &\text{if \(a_2\le a_1 \le b_2\le b_1\),}\\
    \Z[1] &\text{if \(a_2-1\le b_1\), \(a_1<a_2\), and
      \(b_1<b_2\),}\\
    0 &\text{otherwise.}
  \end{cases}
\end{equation}

\subsection{Products of natural transformations}
\label{sec:products_natural_transformations}

Our next task is to identify the natural transformations that correspond to the generators of the groups in~\eqref{eq:Nattrafo_example}; this also allows us to compute products in~\(\Nattrafo\).

First we study the grading preserving transformations that appear in the first case.  We introduce a partial order~\(\ge\) and a strict partial order~\(\gg\) on \(\Loclo(X)\) by
\begin{alignat*}{3}
  [a_1,b_1] &\ge [a_2,b_2]
  &\qquad\iff\qquad&
  \text{\(a_1\ge a_2\) and \(b_1\ge b_2\),}\\
  [a_1,b_1] &\gg [a_2,b_2]
  &\qquad\iff\qquad&
  \text{\(a_1>b_2\).}
\end{alignat*}
Our computation shows that \(\Nattrafo_0(Y,Z)\neq\{0\}\) if and only if \(Y\ge Z\) but not \(Y\gg Z\).  This is equivalent to \(Y\cap Z\) being non-empty, closed in~\(Y\), and open in~\(Z\).  Under these assumptions, there is a natural non-zero \Star{}homomorphism given by the composition
\[
\mu_Y^Z\colon A(Y)\prto A(Y\cap Z) \into A(Z)
\]
because \(A(Y\cap Z)\) is a quotient of \(A(Y)\) and an ideal in \(A(Z)\).  The natural transformation \(\FK_Y\Rightarrow\FK_Z\) induced by~\(\mu_Y^Z\) maps \(\xi_Y\in \FK_{Y,0}(\Repr_Y)\), which is the class of the trivial line bundle over \(S(Y,Y)=\Delta_{[a_1,b_1]}\), to the trivial line bundle over \(S(Y,Z)= \Delta_{[a_1,b_2]}\).  Since this is the generator of \(\FK_{Z,0}(\Repr_Y) = \K^0\bigl(S(Y,Z)\bigr) \cong \Z[0]\), the natural transformation~\(\mu_Y^Z\) generates \(\Nattrafo_0(Y,Z)\).

If \(Y\gg Z\), then we let \(\mu_Y^Z\colon A(Y)\to A(Z)\) be the zero map, which induces the zero transformation \(\FK_Y\Rightarrow\FK_Z\).  With this convention, we get \(\mu_Y^Z\circ \mu_W^Y=\mu_W^Z\) for all \(Y,Z,W\in\Loclo(X)\) with \(W\ge Y\ge Z\), also if \(W\gg Z\); this equation holds on the level of \Star{}homomorphisms and, therefore, also for the induced natural transformations.  We can sum this up as follows:

\begin{lemma}
  \label{lem:Nattrafo_even}
  The category~\(\Nattrafo_0\) of grading-preserving natural transformations \(\FK_Y\Rightarrow\FK_Z\) for \(Y,Z\in\Loclo(X)\) is the pre-additive category generated by natural transformations \(\mu_Y^Z\colon \FK_Y\Rightarrow\FK_Z\) for all \(Y\ge Z\) with the relations \(\mu_Y^Z\circ\mu_W^Y=\mu_W^Z\) for \(W\ge Y\ge Z\) and \(\mu_Y^Z=0\) for \(Y\gg Z\).
\end{lemma}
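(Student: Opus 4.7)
The plan is to check first that the listed relations hold in $\Nattrafo_0$, and then that nothing further is needed. The numerical data has already been collected: by~\eqref{eq:Nattrafo_example} each morphism group $\Nattrafo_0(Y,Z)$ is cyclic of rank at most one, and in the discussion immediately preceding the lemma the transformation $\mu_Y^Z$ has been identified as a generator whenever this group is non-zero.

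First I would verify both relations at the level of $\Cst$\nb-algebra \Star{}homomorphisms, so that they automatically descend to the induced natural transformations on $\FK$. For $W \ge Y \ge Z$, the composition $\mu_Y^Z \circ \mu_W^Y$ is the zigzag
\[
A(W) \prto A(W\cap Y) \into A(Y) \prto A(Y\cap Z) \into A(Z),
\]
whereas $\mu_W^Z$ is the shorter zigzag $A(W) \prto A(W\cap Z) \into A(Z)$. Since $X$ is totally ordered, $W \ge Y \ge Z$ forces $W \cap Y \cap Z = W \cap Z$; a short diagram chase then shows that both compositions coincide with the canonical quotient $A(W) \prto A(W\cap Z)$ followed by the canonical ideal inclusion into $A(Z)$. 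For the second relation, $Y \gg Z$ gives $Y \cap Z = \emptyset$, so the defining \Star{}homomorphism of $\mu_Y^Z$ factors through $A(\emptyset) = 0$.

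For completeness of the presentation, let $\mathcal{F}$ denote the pre-additive category with objects $\Loclo(X)$ presented by abstract symbols $\mu_Y^Z$ (for $Y \ge Z$) subject to the listed relations together with $\mu_Y^Y = \ID_Y$, and let $\Phi \colon \mathcal{F} \to \Nattrafo_0$ be the functor sending each abstract generator to the corresponding natural transformation; it is well-defined by the previous step. Iterated application of the composition relation collapses any non-identity word $\mu_{Y_{k-1}}^{Y_k} \circ \cdots \circ \mu_{Y_0}^{Y_1}$ in the generators to a single $\mu_{Y_0}^{Y_k}$, which moreover vanishes whenever $Y_0 \gg Y_k$. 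Hence $\mathcal{F}(Y,Z)$ is cyclic, generated by $\mu_Y^Z$ when $Y \ge Z$ and $Y \not\gg Z$, and zero otherwise. Comparing with~\eqref{eq:Nattrafo_example} and using that $\Phi$ sends each abstract generator $\mu_Y^Z$ to a generator of $\Nattrafo_0(Y,Z)$, we conclude that $\Phi$ is bijective on every morphism group.

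The main obstacle is the bookkeeping for the composition relation at the \Star{}homomorphism level; once that is settled, everything else reduces to matching cyclic groups via a surjection.
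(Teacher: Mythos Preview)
Your argument is correct and follows essentially the same route as the paper: the paper's proof is the discussion immediately preceding the lemma, which computes the groups $\Nattrafo_0(Y,Z)$ via~\eqref{eq:Nattrafo_example}, identifies $\mu_Y^Z$ as a generator when the group is nonzero, and verifies the composition relation at the level of \Star{}homomorphisms. Your version is simply a bit more formal in packaging the completeness step via the auxiliary category~$\mathcal{F}$ and the comparison functor~$\Phi$.
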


This list of generators is longer than necessary.  Clearly, we can write any \(\mu_Y^Z\) as a product of the transformations \(\mu_{[a,b]}^{[a-1,b]}\) for \(2\le a\le b\le n\) and \(\mu_{[a,b]}^{[a,b-1]}\) for \(1\le a<b\le n\).  Moreover, these transformations themselves are indecomposable, that is, they cannot be written themselves as products in a non-trivial way.

Now we turn to the natural transformations of degree~\(1\).  For any \(b\in X\) and any \(\Cst\)\nb-algebra~\(A\) over~\(X\), we have a natural \(\Cst\)\nb-algebra extension
\[
A([b,n]) \into A([1,n]) \prto A([1,b-1]),
\]
which generates an odd natural transformation
\[
\delta_b\colon \FK_{[1,b-1]} \Rightarrow \FK_{[b,n]}.
\]
Composing with the grading preserving natural transformations~\(\mu\) above, we get a natural transformation of degree~\(1\)
\begin{equation}
  \label{eq:odd_nattrafo}
  \delta_Y^Z\colon
  \FK_Y = \FK_{[a_1,b_1]} \xRightarrow{\mu}
  \FK_{[1,a_2-1]} \xRightarrow{\delta_{a_2}}
  \FK_{[a_2,n]} \xRightarrow{\mu}
  \FK_{[a_2,b_2]} = \FK_Z
\end{equation}
whenever \(b_1\ge a_2-1\).

Equation~\eqref{eq:Nattrafo_example} predicts that this transformation vanishes if \(a_1\ge a_2\) or \(b_1\ge b_2\).  This can be verified as follows.  Vanishing for \(a_1\ge a_2\) is clear because then \([a_1,b_1] \gg [1,a_2-1]\).  By the naturality of the boundary map, the transformation in~\eqref{eq:odd_nattrafo} agrees with the composition of \(\mu\colon \FK_{[a_1,b_1]} \Rightarrow \FK_{[a_1,a_2-1]}\) with the boundary map for the extension
\begin{equation}
  \label{eq:extension_II}
  A([a_2,b_2]) \into A([a_1,b_2]) \prto A([a_1,a_2-1]).
\end{equation}
If \(b_1\ge b_2\), then \(\mu_{[a_1,b_1]}^{[a_1,a_2-1]}\) factors through the quotient map in~\eqref{eq:extension_II}.  But the composite of two maps in a six-term exact sequence vanishes.

Equation~\eqref{eq:odd_nattrafo} produces a natural transformation \(\delta_Y^Z\in\Nattrafo_1(Y,Z)\) whenever \(a_1<a_2\), \(b_1<b_2\), and \(a_2-1\le b_1\), that is, whenever~\eqref{eq:Nattrafo_example} predicts \(\Nattrafo_1(Y,Z)\) to be non-zero.  We claim that~\(\delta_Y^Z\) generates this group.  This follows because the natural transformation~\(\delta_Y^Z\) maps the class of the trivial line bundle over \(S(Y,Y)\) to the generator of \(\K^1\bigl(S(Y,Z)\bigr) \cong \Z\).

Notice that \(\Nattrafo_1([a_2,n],Z)=\{0\}\) for any \(Z\in\Loclo(X)\).  Since the natural transformation~\eqref{eq:odd_nattrafo} above factors through \(\FK_{[a_2,n]}\), \emph{any product of two odd natural transformations vanishes}.  Thus the category~\(\Nattrafo\) is a split extension of~\(\Nattrafo_0\) by the bimodule~\(\Nattrafo_1\).  The bimodule structure on~\(\Nattrafo_1\) is very simple: a product \(\mu_Y^Z\circ\delta_W^Y\) or \(\delta_Y^Z\circ\mu_W^Y\) is equal to~\(\delta_W^Z\) whenever all three natural transformations are defined, and zero otherwise.

\begin{example}
  \label{exa:Cstar_extensions}
  To make our constructions more concrete, we now consider the example \(n=2\), which corresponds to extensions of \(\Cst\)\nb-algebras.  There are only three non-empty locally closed subsets: \(1=[1,1]\), \(12=[1,2]\), and \(2=[2,2]\).  The order complex is an interval; we label its end points \(1\) and~\(2\).  The map \((m,M)\) from \(\Ch(X)=[1,2]\) to \(X^\op\times X\) maps
  \[
  1\mapsto (1,1),\qquad
  2\mapsto (2,2),\qquad
  \mathopen]1,2\mathclose[ \mapsto (1,2).
  \]
  Correspondingly, we have
  \begin{alignat*}{3}
    S(1,1)&=\{1\},&\qquad
    S(1,2)&=\mathopen]1,2\mathclose[,&\qquad
    S(1,12)&=[1,2\mathclose[,\\
    S(2,1)&=\emptyset,&\qquad
    S(2,2)&=\{2\},&\qquad
    S(2,12)&=\{2\},\\
    S(12,1)&=\{1\},&\qquad
    S(12,2)&=\mathopen]1,2],&\qquad
    S(12,12)&=[1,2].
  \end{alignat*}
  Taking \(\K\)\nb-theory, we get
  \begin{alignat*}{3}
    \Nattrafo(1,1)&=\Z[0],&\qquad
    \Nattrafo(1,2)&=\Z[1],&\qquad
    \Nattrafo(1,12)&=0,\\
    \Nattrafo(2,1)&=0,&\qquad
    \Nattrafo(2,2)&=\Z[0],&\qquad
    \Nattrafo(2,12)&=\Z[0],\\
    \Nattrafo(12,1)&=\Z[0],&\qquad
    \Nattrafo(12,2)&=0,&\qquad
    \Nattrafo(12,12)&=\Z[0].
  \end{alignat*}
\end{example}

\subsection{Ring-theoretic properties of the natural transformations}
\label{sec:ring-theoretic_properties}

We now observe some general ring-theoretic properties of \(\Nattrafo\) for \(X=\{1,\dotsc,n\}\) with the total order.  We exclude the trivial case \(n=1\).  We may replace \(\Nattrafo\) by a \(\Z/2\)-graded ring by taking the direct sum of \(\Nattrafo_*(Y,Z)\) for all \(Y,Z\in\Loclo(X)^*\) and defining the product as usual for a category ring.  Then \(\Nattrafo\)\nb-modules become \(\Z/2\)-graded modules over this \(\Z/2\)-graded ring, and ring-theoretic notions such as the Jacobson radical and the balanced tensor product \(\otimes_\Nattrafo\) make sense.

\begin{definition}
  \label{def:Nattrafo_nil}
  Let \(\Nattrafo_\nil\subseteq\Nattrafo\) be the subgroup spanned by the natural transformations~\(\mu_Y^Z\) with \(Y\neq Z\) and~\(\delta_Y^Z\) with arbitrary \(Y,Z\).

  Let \(\Nattrafo_\sesi\subseteq\Nattrafo\) be the subgroup spanned by the natural transformations~\(\mu_Y^Y\) with \(Y\in\Loclo(X)^*\).
\end{definition}

\begin{lemma}
  \label{lem:Nattrafo_nil}
  The subgroup \(\Nattrafo_\nil\) is the maximal nilpotent ideal in~\(\Nattrafo\), it is the nilradical and the Jacobson radical of \(\Nattrafo\).  The subgroup \(\Nattrafo_\sesi\) is a semi-simple subring, and \(\Nattrafo\) decomposes as a semi-direct product \(\Nattrafo_\nil \rtimes \Nattrafo_\sesi\).
\end{lemma}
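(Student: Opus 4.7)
The plan is to establish the additive decomposition \(\Nattrafo = \Nattrafo_\sesi \oplus \Nattrafo_\nil\), then verify that \(\Nattrafo_\sesi\) is a subring, that \(\Nattrafo_\nil\) is a two-sided nilpotent ideal, and that the quotient ring \(\Nattrafo/\Nattrafo_\nil\) is Jacobson semi-simple; the three identifications of \(\Nattrafo_\nil\) together with the semi-direct product decomposition then follow by standard ring theory.

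By~\eqref{eq:Nattrafo_example} and the discussion of the odd transformations \(\delta_Y^Z\), each \(\Nattrafo_*(Y,Z)\) is free of rank at most~\(1\), generated by \(\mu_Y^Z\) in degree~\(0\) or by \(\delta_Y^Z\) in degree~\(1\). The identity morphisms \(\mu_Y^Y\) span \(\Nattrafo_\sesi\), while the non-identity \(\mu_Y^Z\) and the \(\delta_Y^Z\) span \(\Nattrafo_\nil\), giving the direct sum decomposition. Since compositions in the category ring vanish whenever source and target do not match, \(\mu_Y^Y \circ \mu_Z^Z\) equals \(\mu_Y^Y\) if \(Y=Z\) and is zero otherwise; hence \(\Nattrafo_\sesi \cong \prod_{Y\in\Loclo(X)^*} \Z\) as a ring, which is Jacobson semi-simple.

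To see that \(\Nattrafo_\nil\) is a two-sided ideal, I would combine the composition rules \(\mu\circ\mu\in\{\mu,0\}\), \(\mu\circ\delta,\ \delta\circ\mu\in\{\delta,0\}\), \(\delta\circ\delta=0\) with the observation that a product of two \(\mu\)'s can land in \(\Nattrafo_\sesi\) only as \(\mu_W^Y\circ\mu_A^W=\mu_A^Y\) with \(A=Y\); but then \(A\ge W\ge A\) forces \(A=W=Y\), contradicting the presence of a non-identity factor. Thus \(\Nattrafo_\nil\) absorbs multiplication by anything in~\(\Nattrafo\). For nilpotency, note that a non-identity \(\mu_Y^Z\) strictly decreases in the partial order~\(\ge\) on the finite set \(\Loclo(X)^*\); hence any composition of more than \(N \defeq |\Loclo(X)^*|\) non-identity \(\mu\)'s vanishes, and combined with \(\delta\circ\delta=0\) this gives \(\Nattrafo_\nil^{2N+1}=0\).

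The inclusion \(\Nattrafo_\sesi\hookrightarrow\Nattrafo\) then splits the quotient map \(\Nattrafo\prto\Nattrafo/\Nattrafo_\nil\cong\Nattrafo_\sesi\), yielding the semi-direct product. Since \(\Nattrafo_\nil\) is nilpotent it is contained in the Jacobson radical, while the quotient \(\Nattrafo_\sesi\) is Jacobson semi-simple; so \(\Nattrafo_\nil\) is precisely the Jacobson radical, and likewise the nilradical and the maximal nilpotent two-sided ideal. The only delicate step is verifying the ideal property above; all else is a formal consequence of having a nilpotent ideal with semi-simple quotient.
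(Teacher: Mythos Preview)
Your proof is correct and follows essentially the same route as the paper's: establish the additive splitting \(\Nattrafo=\Nattrafo_\sesi\oplus\Nattrafo_\nil\), identify \(\Nattrafo_\sesi\cong\Z^{\Loclo(X)^*}\), check that \(\Nattrafo_\nil\) is a nilpotent ideal via the finiteness of the partial order~\(\ge\), and deduce the radical statements from the vanishing of the Jacobson radical of the quotient. The paper's proof is terser---it simply asserts that \(\Nattrafo_\nil\) is an ideal and that nilpotency follows from finiteness of the poset---while you spell out the ideal property via the composition rules and give an explicit nilpotency bound using \(\delta\circ\delta=0\); but the underlying argument is the same.
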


\begin{proof}
  Since all~\(\mu_Y^Y\) are idempotent, \(\Nattrafo_\sesi\) is a subring isomorphic to \(\Z^{\Loclo(X)^*}\) with pointwise multiplication.  It is easy to see that \(\Nattrafo_\nil\) is an ideal in \(\Nattrafo\).  It is nilpotent, that is, \(\Nattrafo_\nil^k=\{0\}\) for some \(k\in\N\), because \(\Loclo(X)^*\) is finite and~\(\ge\) is a partial order on it.  Since \(\Nattrafo=\Nattrafo_\nil\oplus\Nattrafo_\sesi\) as Abelian groups, we get the desired semi-direct product decomposition.  Since the Jacobson radical of \(\Nattrafo_\sesi\) vanishes, \(\Nattrafo_\nil\) is both the nilradical and the Jacobson radical of \(\Nattrafo\).
\end{proof}

We are going to use Lemma~\ref{lem:Nattrafo_nil} to characterise the projective \(\Nattrafo\)\nb-modules.  This characterisation involves the following two definitions.

\begin{definition}
  \label{def:module_exact}
  We call an \(\Nattrafo\)\nb-module~\(M\) \emph{exact} if the chain complexes
  \[
  \dotsb \to M(U)\xrightarrow{\mu_U^Y}
  M(Y)\xrightarrow{\mu_Y^{Y\setminus U}}
  M(Y\setminus U)\xrightarrow{\delta_{Y\setminus U}^U} M(U) \to
  \dotsb
  \]
  are exact for all \(Y\in\Loclo(X)\), \(U\in\Open(Y)\) as in~\eqref{eq:exactness_intro}.
\end{definition}

\begin{proposition}
  \label{pro:exact_modules}
  Let \(K\into E\prto Q\) be an extension of \(\Nattrafo\)\nb-modules.  If two of the modules \(K,E,Q\) are exact, so is the third one.
\end{proposition}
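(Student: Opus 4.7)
The plan is to recognise Definition~\ref{def:module_exact} as saying that a certain $\Z/2$\nb-periodic chain complex has vanishing homology, and then invoke the standard long exact sequence in homology attached to a short exact sequence of chain complexes.

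First I would fix $Y\in\Loclo(X)$ and $U\in\Open(Y)$, and write $V=Y\setminus U$. For any $\Nattrafo$\nb-module~$M$, the three natural transformations $\mu_U^Y\in\Nattrafo_0(U,Y)$, $\mu_Y^V\in\Nattrafo_0(Y,V)$, and $\delta_V^U\in\Nattrafo_1(V,U)$ act on~$M$ and assemble into a $\Z/2$\nb-periodic chain complex $M_\bullet(Y,U)$ with underlying graded group $M(U)\oplus M(Y)\oplus M(V)$ repeated periodically. Exactness of~$M$ at the pair $(Y,U)$ in the sense of Definition~\ref{def:module_exact} is precisely the statement that $H_*(M_\bullet(Y,U))=0$.

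Next, because morphisms in $\Mod{\Nattrafo}$ commute with the action of every element of~$\Nattrafo$, in particular with $\mu_U^Y$, $\mu_Y^V$ and $\delta_V^U$, the extension $K\into E\prto Q$ yields a commuting diagram whose rows are the three complexes $K_\bullet(Y,U)$, $E_\bullet(Y,U)$, $Q_\bullet(Y,U)$. Evaluating the extension at each of $U$, $Y$, $V$ gives short exact sequences of (graded) Abelian groups, so the columns of this diagram are short exact. In other words, we have a short exact sequence of $\Z/2$\nb-periodic chain complexes
\[
0\to K_\bullet(Y,U)\to E_\bullet(Y,U)\to Q_\bullet(Y,U)\to 0.
\]

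The associated long exact sequence in homology collapses, by $\Z/2$\nb-periodicity, to a cyclic six-term exact sequence relating $H_0$ and $H_1$ of the three complexes. From this six-term exact sequence it is immediate that the vanishing of the homologies of any two of the complexes forces the vanishing of the homology of the third. As $Y\in\Loclo(X)$ and $U\in\Open(Y)$ were arbitrary, this gives the desired conclusion.

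There is essentially no obstacle here beyond bookkeeping: once the condition of exactness is reformulated as the vanishing of homology of a cyclic complex, the result is a standard application of the long exact sequence in homology, and the only thing to verify is that morphisms of $\Nattrafo$\nb-modules commute with the distinguished transformations $\mu_U^Y$, $\mu_Y^V$ and $\delta_V^U$, which is built into the definition of $\Mod{\Nattrafo}$.
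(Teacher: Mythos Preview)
Your proof is correct and follows essentially the same route as the paper: form the periodic chain complex attached to each module, observe that the extension of \(\Nattrafo\)\nb-modules yields a short exact sequence of these chain complexes, and conclude via the long exact homology sequence. The paper's argument is stated more tersely but is identical in substance.
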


\begin{proof}
  Given \(U\) and~\(Y\) as above and a module~\(M\), let \(C_\bullet(M)\) be the chain complex
  \[
  \dotsb \to M(U)[m] \to M(Y)[m] \to M(Y\setminus U)[m] \to
  M(U)[m-1] \to \dotsb.
  \]
  Then \(C_\bullet(K) \into C_\bullet(E) \prto C_\bullet(Q)\) is an extension of chain complexes.  The long exact homology sequence shows that all three of these chain complexes are exact once two of them are exact.
\end{proof}

\begin{definition}
  \label{def:module_semi-simple}
  Given an \(\Nattrafo\)-module~\(M\), we let
  \[
  \Nattrafo_\nil\cdot M = \{x\cdot m\mid x\in\Nattrafo_\nil,\
  m\in M\},\qquad
  M_\sesi \defeq M/\Nattrafo_\nil\cdot M.
  \]
  We call \(M_\sesi\) the \emph{semi-simple part of~\(M\)}.
\end{definition}

Since the tensor product over~\(\Nattrafo\) is right exact, \(M_\sesi \cong \Nattrafo_\sesi \otimes_\Nattrafo M\).  We need the following more concrete description of \(M_\sesi\) or, equivalently, of \(\Nattrafo_\nil\cdot M\).

\begin{lemma}
  \label{lem:gen_M_nil}
  Let~\(M\) be an \(\Nattrafo\)\nb-module and let \(Y=[a,b]\) with \(1\le a \le b\le n\).  Then
  \[
  (\Nattrafo_\nil\cdot M)(Y) =
  \begin{cases}
    \mu_{[a+1,b]}^Y(M[a+1,b]) + \mu_{[a,b+1]}^Y(M[a,b+1])
    &\text{if \(a<b<n\)},\\
    \mu_{[a,b+1]}^Y(M[a,b+1])
    &\text{if \(a=b<n\)},\\
    \mu_{[a+1,b]}^Y(M[a+1,b]) +
    \delta_{[1,a-1]}^Y (M[1,a-1])
    &\text{if \(1<a<b=n\)},\\
    \mu_{[a+1,b]}^Y(M[a+1,b])
    &\text{if \(1=a<b=n\)},\\
    \delta_{[1,a-1]}^Y (M[1,a-1])
    &\text{if \(a=b=n\).}
  \end{cases}
  \]
  If~\(M\) is exact, then
  \[
  (\Nattrafo_\nil\cdot M)(Y) =
  \begin{cases}
    \ker \bigl(\delta_{[a,b]}^{[a+1,b+1]}\colon M[a,b]\to
    M[a+1,b+1]\bigr)
    &\text{if \(b<n\)},\\
    \ker \bigl(\mu_{[a,b]}^{[1,a]}\colon M[a,b]\to
    M[1,a]\bigr)
    &\text{if \(b=n\)}.
  \end{cases}
  \]
\end{lemma}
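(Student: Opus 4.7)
The plan is to establish the two displays separately, first working without the exactness hypothesis and then sharpening with it. For the first display, I reduce every product $\alpha \cdot m$ with $\alpha \in \Nattrafo_\nil$ having target $Y = [a,b]$ to one of the listed generators, using only the multiplicative structure of~$\Nattrafo$. A $\mu_W^Y$ with $W = [a',b'] \neq [a,b]$ and $W \geq [a,b]$ has either $a' \geq a+1$ or $b' \geq b+1$, and Lemma~\ref{lem:Nattrafo_even} then factors it through $\mu_{[a+1,b]}^Y$ or $\mu_{[a,b+1]}^Y$. A $\delta_W^Y$ factors through $\mu_{[a,n]}^Y \circ \delta_a$ by~\eqref{eq:odd_nattrafo}, and when $b<n$ I further factor $\mu_{[a,n]}^Y = \mu_{[a,b+1]}^Y \circ \mu_{[a,n]}^{[a,b+1]}$ so the $\delta$-contribution is absorbed; when $b=n$ the same factorisation sends $\delta_W^Y$ into the image of $\delta_{[1,a-1]}^Y$, and this vanishes when $a=1$. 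The five clauses of the first display enumerate precisely which of these generators exist for the given pair $(a,b)$.

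For the second display, assume $M$ is exact. The inclusion $\subseteq$ follows directly from the bimodule relations of Section~\ref{sec:products_natural_transformations}: in the $b<n$ case, $\delta_{[a,b]}^{[a+1,b+1]} \circ \mu_{[a+1,b]}^{[a,b]} = \delta_{[a+1,b]}^{[a+1,b+1]}$ and $\delta_{[a,b]}^{[a+1,b+1]} \circ \mu_{[a,b+1]}^{[a,b]} = \delta_{[a,b+1]}^{[a+1,b+1]}$ both vanish by~\eqref{eq:Nattrafo_example}; in the $b=n$ case, $\mu_{[a,n]}^{[1,a]} \circ \mu_{[a+1,n]}^{[a,n]} = \mu_{[a+1,n]}^{[1,a]} = 0$ because $[a+1,n] \gg [1,a]$, and $\mu_{[a,n]}^{[1,a]} \circ \delta_{[1,a-1]}^{[a,n]} = \delta_{[1,a-1]}^{[1,a]}$, which also vanishes by~\eqref{eq:Nattrafo_example}.

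The reverse inclusion in the second display is the crux; the idea is to factor the target transformation through the one-point evaluation $M[a,a]$ and invoke the exactness hypothesis twice. For instance, when $a<b<n$ the bimodule law gives $\delta_{[a,b]}^{[a+1,b+1]} = \delta_{[a,a]}^{[a+1,b+1]} \circ \mu_{[a,b]}^{[a,a]}$; given $x$ in the kernel, the exact sequence for $Y=[a,b+1]$, $U=[a+1,b+1]$ writes $\mu_{[a,b]}^{[a,a]}(x) = \mu_{[a,b+1]}^{[a,a]}(y)$ for some~$y$, and since $\mu_{[a,b+1]}^{[a,a]} = \mu_{[a,b]}^{[a,a]} \circ \mu_{[a,b+1]}^{[a,b]}$, the difference $x - \mu_{[a,b+1]}^{[a,b]}(y)$ lies in $\ker \mu_{[a,b]}^{[a,a]}$, which by the exact sequence for $Y=[a,b]$, $U=[a+1,b]$ is the image of $\mu_{[a+1,b]}^{[a,b]}$. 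The case $1<a<b=n$ is analogous, using $\mu_{[a,n]}^{[1,a]} = \mu_{[a,a]}^{[1,a]} \circ \mu_{[a,n]}^{[a,a]}$ together with the identity $\delta_{[1,a-1]}^{[a,a]} = \mu_{[a,n]}^{[a,a]} \circ \delta_{[1,a-1]}^{[a,n]}$; the remaining edge cases $a=b<n$, $1=a<b=n$, and $a=b=n$ each reduce to a single application of exactness.

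The main obstacle will be case bookkeeping rather than conceptual difficulty. One minor subtle point is that the boundary map in the exactness axiom of Definition~\ref{def:module_exact} for an extension $A(U)\into A(Y)\prto A(Y\setminus U)$ must be identified with the specific natural transformation $\delta_{Y\setminus U}^U$ from~\eqref{eq:odd_nattrafo}; this is forced because $\Nattrafo_1(Y\setminus U, U)$ is infinite cyclic by~\eqref{eq:Nattrafo_example} and both maps act as a generator on the representing objects as in Lemma~\ref{lem:translate_trafo}.
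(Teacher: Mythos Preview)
Your proposal is correct and follows essentially the same route as the paper. The only presentational difference is that the paper packages the two-step exactness argument for the ``two-generator'' cases into a single abstract identity---given exact sequences \(A_1\xrightarrow{f_1}B\xrightarrow{g_1}C_1\) and \(A_2\xrightarrow{g_1f_2}C_1\xrightarrow{g_2}C_2\), one has \(\range(f_1)+\range(f_2)=\ker(g_2\circ g_1)\)---and then instantiates it, whereas you carry out the same element chase directly in each case; the underlying factorisation through \(M[a,a]\) and the two applications of exactness are identical.
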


\begin{proof}
  The first assertion holds because any natural transformation \(\FK_Z\Rightarrow\FK_Y\) with \(Z\neq Y\) factors through \(\mu_{[a+1,b]}^Y\) or \(\mu_{[a,b+1]}^Y\) if \(a<b<n\), through \(\mu_{[a,b+1]}^Y\) if \(a=b<n\), and so on.  Here we use that the natural transformations \(\mu_{[a,b]}^{[a-1,b]}\) for \(2\le a\le b\le n\), \(\mu_{[a,b]}^{[a,b-1]}\) for \(1\le a< b\le n\), and \(\delta_{[1,a-1]}^{[a,n]}\) for \(2\le a\le n\) already generate \(\Nattrafo_*\), that is, all other transformations \(\mu_Y^Z\) or \(\delta_Y^Z\) with \(Y\neq Z\) can be written as products of these generators.  By the way, these natural transformations even form a basis for the subquotient \(\Nattrafo_\nil/\Nattrafo_\nil^2\).

  Now assume that~\(M\) is exact.  If \(a=b<n\), then
  \[
  (\Nattrafo_\nil\cdot M)[a,a]
  = \range \Bigl(\mu_{[a,a+1]}^{[a,a]}\Bigr)
  = \ker \Bigl(\delta_{[a,a]}^{[a+1,a+1]}\Bigr).
  \]
  Similarly, we get
  \[
  (\Nattrafo_\nil\cdot M)[n,n] = \ker
  \Bigl(\mu_{[n,n]}^{[1,n]}\Bigr),
  \qquad
  (\Nattrafo_\nil\cdot M)[1,n] = \ker
  \Bigl(\mu_{[1,n]}^{[1,1]}\Bigr).
  \]

  Given \(f_1\colon A_1\to B\) and \(f_2\colon A_2\to B\) and two exact sequences
  \[
  A_1 \xrightarrow{f_1} B \xrightarrow{g_1} C_1,
  \qquad
  A_2 \xrightarrow{g_1f_2} C_1 \xrightarrow{g_2} C_2,
  \]
  we have
  \begin{multline}
    \label{eq:range_sum}
    \range(f_1)+\range(f_2)
    = \ker(g_1)+\range(f_2)
    \\= \{x\in B\mid
    g_1(x) \in \range(g_1\circ f_2) = \ker(g_2)\}
    = \ker(g_2\circ g_1).
  \end{multline}
  If \(a<b<n\), then we apply this to the maps on~\(M\) induced by \(f_1=\mu_{[a+1,b]}^Y\) and \(f_2=\mu_{[a,b+1]}^Y\) with \(Y=[a,b]\).  We get \(g_1 = \mu_Y^{[a,a]}\), \(g_1\circ f_2= \mu_{[a,b+1]}^{[a,a]}\), and hence \(g_2=\delta_{[a,a]}^{[a+1,b+1]}\) and \(g_2\circ g_1=\delta_{[a,b]}^{[a+1,b+1]}\).  This yields the desired formula for \((\Nattrafo_\nil\cdot M)[a,b]\) for \(a<b<n\), using the exactness of~\(M\).  If \(a<b=n\), then we apply the same reasoning to \(f_1=\mu_{[a+1,b]}^Y\) and \(f_2=\delta_{[1,a-1]}^Y\).  Here we get \(g_1=\mu_Y^{[a,a]}\) as above, \(g_1\circ f_2=\delta_{[1,a-1]}^{[a,a]}\), and hence \(g_2=\mu_{[a,a]}^{[1,a]}\) and \(g_2\circ g_1 = \mu_{[a,b]}^{[1,a]}\).  This yields the desired formula for \((\Nattrafo_\nil\cdot M)[a,b]\) for \(a<b=n\).
\end{proof}

\begin{remark}
  \label{rem:longest}
  The natural transformation \(\delta_{[a,b]}^{[a+1,b+1]}\) for \(b<n\) or \(\mu_{[a,n]}^{[1,a]}\) for \(b=n\) is the longest natural transformation out of \([a,b]\) in the following sense: it factors through \(\delta_{[a,b]}^Z\) or \(\mu_{[a,b]}^Z\) whenever the latter is defined and non-zero.  Thus Lemma~\ref{lem:gen_M_nil} identifies \(\Nattrafo_\nil\cdot M(Y)\) with the largest proper subgroup of \(M(Y)\) that is the kernel of some \(\delta_{[a,b]}^Z\) or \(\mu_{[a,b]}^Z\).
\end{remark}

The following proposition is a rather trivial variant of the \emph{Nakayama Lemma}.  Unlike in the usual Nakayama Lemma, we do not assume the module to be finitely generated.  This is no problem because the relevant ideal \(\Nattrafo_\nil\) is nilpotent.

\begin{proposition}
  \label{pro:Nakayama}
  Let~\(M\) be an \(\Nattrafo\)\nb-module with \(M_\sesi=0\).  Then \(M=0\).
\end{proposition}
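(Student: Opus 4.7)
The plan is to imitate the classical Nakayama argument but replace the usual finite-generation hypothesis with the nilpotence of $\Nattrafo_\nil$ established in Lemma~\ref{lem:Nattrafo_nil}. The hypothesis $M_\sesi=0$ says exactly that $M=\Nattrafo_\nil\cdot M$, so iterating this identity gives $M=\Nattrafo_\nil^k\cdot M$ for every $k\ge 1$. Taking $k$ large enough that $\Nattrafo_\nil^k=0$ (such a $k$ exists because $\Loclo(X)^*$ is finite and the relevant partial order on it is strict, as noted in the proof of Lemma~\ref{lem:Nattrafo_nil}) forces $M=0$.

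More concretely, the first step is to unwind the definition: $M_\sesi=M/\Nattrafo_\nil\cdot M=0$ is equivalent to $\Nattrafo_\nil\cdot M=M$, i.e.\ every element $m\in M(Y)$ can be written as a finite sum $\sum_i x_i\cdot m_i$ with $x_i\in\Nattrafo_\nil$ and $m_i\in M$. The second step is to apply this recursively to each~$m_i$, producing a presentation of~$m$ as a sum of elements of $\Nattrafo_\nil^2\cdot M$, then $\Nattrafo_\nil^3\cdot M$, and so on. The third step is to invoke the nilpotence of $\Nattrafo_\nil$ to conclude that this iteration terminates in zero.

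There is no real obstacle: unlike in the classical Nakayama Lemma, we do not need to track finitely many generators, because the \emph{ideal} $\Nattrafo_\nil$ itself satisfies $\Nattrafo_\nil^k=\{0\}$ for some fixed $k$ depending only on~$X$. The only minor point worth being careful about is that nilpotence of $\Nattrafo_\nil$ as an ideal in the category ring means that any $k$-fold composable product $x_1\circ x_2\circ\dotsb\circ x_k$ of morphisms in $\Nattrafo_\nil$ vanishes, which is exactly what is needed to make the iteration of $M=\Nattrafo_\nil\cdot M$ collapse to $M=0$ after $k$ steps.
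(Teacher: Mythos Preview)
Your proposal is correct and follows exactly the same argument as the paper's proof: translate $M_\sesi=0$ into $M=\Nattrafo_\nil\cdot M$, iterate to get $M=\Nattrafo_\nil^j\cdot M$ for all~$j$, and invoke the nilpotence of~$\Nattrafo_\nil$ from Lemma~\ref{lem:Nattrafo_nil} to conclude $M=0$. The paper's proof is more terse but otherwise identical.
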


\begin{proof}
  By assumption, \(M=\Nattrafo_\nil\cdot M\).  By induction, this implies \(M=\Nattrafo_\nil^j\cdot M\) for all \(j\in\N\).  Since \(\Nattrafo_\nil^k=0\) for some~\(k\), we get \(M=0\).
\end{proof}

\subsection{Characterisation of free and projective modules}
\label{sec:free_projective_modules}

\begin{definition}
  \label{def:free_Nattrafo-module}
  For \(Y\in\Loclo(X)\), the \emph{free \(\Nattrafo\)\nb-module on~\(Y\)} is defined by
  \[
  P_Y(Z)\defeq \Nattrafo_*(Y,Z) \qquad
  \text{for all \(Z\in\Loclo(X)\).}
  \]
  An \(\Nattrafo\)\nb-module is called \emph{free} if it is isomorphic to a direct sum of degree-shifted free modules \(P_Y[j]\), \(j\in \Z/2\).
\end{definition}

\begin{theorem}
  \label{the:projective_module_Nattrafo}
  Let~\(M\) be an \(\Nattrafo\)\nb-module.  Then the following are equivalent:
  \begin{enumerate}[label=\textup{(\roman{*})}]
  \item \(M\) is a free \(\Nattrafo\)\nb-module.

  \item \(M\) is a projective \(\Nattrafo\)\nb-module.

  \item \(M_\sesi(Y)= \Nattrafo_\sesi\otimes_\Nattrafo M(Y)\) is a free Abelian group for all \(Y\in\Loclo(X)\) and \(\Tor_1^\Nattrafo(\Nattrafo_\sesi, M)=0\).

  \item \(M(Y)\) is a free Abelian group for all \(Y\in\Loclo(X)\) and~\(M\) is exact.
  \end{enumerate}
  Here \(\Tor_1^\Nattrafo\) denotes the first derived functor of \(\otimes_\Nattrafo\).  The first three conditions remain equivalent when we replace~\(\Nattrafo\) by any ring that is a nilpotent extension of the ring~\(\Z^N\) for some \(N\in\N\).
\end{theorem}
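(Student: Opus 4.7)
The plan is to prove (1) $\Rightarrow$ (2) $\Rightarrow$ (3) $\Rightarrow$ (1) (which works for any nilpotent extension of $\Z^N$), together with (1) $\Leftrightarrow$ (4) specific to $\Nattrafo$. The implication (1) $\Rightarrow$ (2) is standard; for (2) $\Rightarrow$ (3), a free module $F = \bigoplus_\alpha P_{Y_\alpha}[j_\alpha]$ has componentwise free $F_\sesi$ (with $F_\sesi(Y) \cong \bigoplus_{\alpha:Y_\alpha=Y}\Z[j_\alpha]$) and vanishing $\Tor_1(\Nattrafo_\sesi,F)$, and both properties pass to direct summands.

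For (3) $\Rightarrow$ (1), I would apply a Nakayama-style lifting argument. Pick $\Z$-bases $\mathcal B_Y \subseteq M_\sesi(Y)$ and lift them to elements $\tilde b \in M(Y)$; this defines a homomorphism $\varphi\colon P \to M$ from the free module $P := \bigoplus_Y P_Y^{\oplus |\mathcal B_Y|}$ such that $\varphi_\sesi$ is an isomorphism. Surjectivity follows from Proposition~\ref{pro:Nakayama} applied to $\coker \varphi$. Setting $K := \ker \varphi$, the long exact $\Tor$ sequence
\[
0 = \Tor_1(\Nattrafo_\sesi, P) \to \Tor_1(\Nattrafo_\sesi, M) \to K_\sesi \to P_\sesi \xrightarrow{\cong} M_\sesi \to 0
\]
combined with $\Tor_1(\Nattrafo_\sesi, M) = 0$ yields $K_\sesi = 0$; a second application of Proposition~\ref{pro:Nakayama} gives $K = 0$, so $M\cong P$ is free.

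The implication (1) $\Rightarrow$ (4) is immediate: by Theorem~\ref{the:representability} each $P_Y \cong \FK(\Repr_Y)$ is the filtrated $\K$\nb-theory of a $\Cst$\nb-algebra, so the six-term exact sequences translate into exactness of $P_Y$ as an $\Nattrafo$\nb-module, while componentwise freeness follows from~\eqref{eq:Nattrafo_example}. For (4) $\Rightarrow$ (1), freeness of $M_\sesi(Y)$ comes from Lemma~\ref{lem:gen_M_nil}: for exact $M$ it identifies $M_\sesi(Y)$ with the range of the ``longest'' natural transformation $\lambda_Y^M$ out of $Y$, which is a subgroup of the free abelian group at the target of $\lambda_Y$. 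Build $\varphi\colon P \to M$ exactly as in the previous paragraph; its kernel $K$ has free components (the SES of abelian groups splits because $M(Y)$ is free) and is exact by Proposition~\ref{pro:exact_modules}.

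The crux is now to show $K_\sesi = 0$ directly, without invoking a $\Tor$-vanishing hypothesis. Applying Lemma~\ref{lem:gen_M_nil} to each free summand $P_{Y_\alpha}$ (which is exact by the (1)$\Rightarrow$(4) step above), one checks $(P_{Y_\alpha})_\sesi(Y) = 0$ for $Y_\alpha \neq Y$; equivalently, $\lambda_Y^P$ annihilates every $P_{Y_\alpha}$\nb-summand with $Y_\alpha \neq Y$ and is injective on the $P_Y$\nb-summands. Since $\varphi_\sesi$ is an isomorphism, the composition $P(Y) \xrightarrow{\varphi} M(Y) \to M_\sesi(Y)$ coincides with the canonical projection onto the $P_Y$\nb-summands; hence $K(Y) = \ker(\varphi|_{P(Y)})$ is contained in $\bigoplus_{Y_\alpha \neq Y} P_{Y_\alpha}(Y)^{\oplus n_{Y_\alpha}}$, so $\lambda_Y^P$ kills $K(Y)$ entirely, giving $\lambda_Y^K = 0$. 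Lemma~\ref{lem:gen_M_nil} applied to the exact module $K$ then forces $K_\sesi(Y) = \range(\lambda_Y^K) = 0$, and Proposition~\ref{pro:Nakayama} yields $K = 0$. The main obstacle is locating this clean structural description of $\lambda_Y$ on the free modules, which identifies it with the ``diagonal'' $P_Y$\nb-summands; once this is available, the exactness built into condition~(4) together with the splitting of the underlying SES of abelian groups does the rest formally.
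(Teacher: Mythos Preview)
Your argument is correct and largely parallels the paper's for the cycle (1)$\Rightarrow$(2)$\Rightarrow$(3)$\Rightarrow$(1) and for (1)$\Rightarrow$(4). The route from~(4) back to the other conditions differs, however. The paper proves (4)$\Rightarrow$(3) by isolating a standalone lemma (Lemma~\ref{lem:Tor_technical}): for \emph{any} exact module~$M$, one has $\Tor_1^\Nattrafo(\Nattrafo_\sesi,M)=0$. Its proof takes an \emph{arbitrary} epimorphism $\pi\colon P\prto M$ from a projective, observes that $K=\ker\pi$ is again exact, and uses Lemma~\ref{lem:gen_M_nil} to identify $K_\sesi(Y)$ and $P_\sesi(Y)$ naturally with subgroups of $K(Z)$ and $P(Z)$ (namely the ranges of~$\lambda_Y$); since $K(Z)\hookrightarrow P(Z)$, the induced map $K_\sesi\to P_\sesi$ is injective, forcing $\Tor_1=0$ via the long exact sequence. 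You instead prove (4)$\Rightarrow$(1) directly: for your \emph{specific} free cover $\varphi\colon P\to M$ with $\varphi_\sesi$ an isomorphism, you locate $K(Y)$ inside the off-diagonal summands $\bigoplus_{Y_\alpha\neq Y}P_{Y_\alpha}(Y)$, which $\lambda_Y$ annihilates, and conclude $K_\sesi(Y)=\range(\lambda_Y^K)=0$. Both arguments ultimately rest on the same identification $M_\sesi(Y)\cong\range(\lambda_Y^M)$ for exact~$M$ from Lemma~\ref{lem:gen_M_nil}; the paper's packaging has the advantage that the $\Tor$-vanishing is stated for arbitrary exact modules and is reused verbatim in the proof of Theorem~\ref{the:length_one_example}, whereas your version is slightly more self-contained for the theorem at hand but would need to be redone there.
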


\begin{proof}
  The Yoneda Lemma asserts that \(\Hom(P_Y,M) \cong M(Y)\) for all \(Y\in\Loclo(X)\) and all \(\Nattrafo\)\nb-modules~\(M\).  Hence free modules are projective, that is, (1)\(\Longrightarrow\)(2).  A functor of the form \(M\mapsto R\otimes_S M\) for a ring homomorphism \(S\to R\) always maps free modules to free modules and hence maps projective modules to projective modules.  Furthermore, derived functors like \(\Tor_1^\Nattrafo\) automatically vanish on projective modules.  This yields the implication (2)\(\Longrightarrow\)(3).  We are going to prove that~(3) implies~(1).

  Since \(M_\sesi(Y)\) is a free Abelian group for all~\(Y\), \(M_\sesi\) is a free module over \(\Nattrafo_\sesi \cong \Z^{\Loclo(X)^*}\).  Hence \(P\defeq \Nattrafo\otimes_{\Nattrafo_\sesi} M_\sesi\) is a free \(\Nattrafo\)\nb-module.  The canonical projection \(M\to M_\sesi\) splits by an \(\Nattrafo_\sesi\)\nb-module homomorphism because \(M_\sesi\) is free.  This induces an \(\Nattrafo\)\nb-module homomorphism \(f\colon P\to M\) because of the adjointness relation
  \[
  \Hom_\Nattrafo(\Nattrafo\otimes_{\Nattrafo_\sesi} X,Y)
  \cong \Hom_{\Nattrafo_\sesi}(X,Y).
  \]
  We claim that~\(f\) is invertible, so that \(M\cong P\) is a free module as asserted.  We have
  \[
  P_\sesi
  = \Nattrafo_\sesi \otimes_\Nattrafo \Nattrafo
  \otimes_{\Nattrafo_\sesi} M_\sesi
  \cong \Nattrafo_\sesi \otimes_{\Nattrafo_\sesi} M_\sesi
  \cong M_\sesi.
  \]
  Inspection shows that this isomorphism is induced by~\(f\).  Since the functor \(M\mapsto M_\sesi\) is right-exact, this implies \(\coker(f)_\sesi=0\) and hence \(\coker(f)=0\) by the Nakayama Lemma (Proposition~\ref{pro:Nakayama}).  That is, \(f\) is an epimorphism.

  Let \(K\defeq \ker(f)\), then we get an exact sequence of \(\Nattrafo\)\nb-modules \(K\into P\prto M\).  The derived functors of \(\Nattrafo_\sesi \otimes_\Nattrafo\blank\) provide a long exact sequence
  \begin{equation}
    \label{eq:Tor_sequence}
    0 \to \Tor^\Nattrafo_1(\Nattrafo_\sesi,M)
    \to K_\sesi \to P_\sesi \xrightarrow[\cong]{f} M_\sesi \to 0.
  \end{equation}
  This exact sequence ends at \(\Tor_1^\Nattrafo(\Nattrafo_\sesi,P)=0\) because~\(P\) is projective.  Since \(\Tor^\Nattrafo_1(\Nattrafo_\sesi,M)=0\) by assumption, we conclude that \(K_\sesi=0\).  Hence another application of the Nakayama Lemma shows that \(\ker(f)=0\) as well.  Thus~\(f\) is invertible.  This finishes the proof of the implication (3)\(\Longrightarrow\)(1), showing that the first three conditions are equivalent.  Furthermore, our argument so far works for any split nilpotent extension of~\(\Z^N\) for some \(N\in\N\) because this is the only information about~\(\Nattrafo\) that we have used.  Nilpotent extensions of the ring~\(\Z^N\) always split because we can lift orthogonal idempotents in nilpotent extensions.

  Free \(\Nattrafo\)\nb-modules are exact, and they consist of free Abelian groups by~\eqref{eq:Nattrafo_example}.  This yields the implication (1)\(\Longrightarrow\)(4).  We are going to prove that~(4) implies~(3).  This will finish the proof of the theorem.  Since we will use this once again later, we state half of this argument as a separate lemma:

  \begin{lemma}
    \label{lem:Tor_technical}
    Let~\(M\) be an exact \(\Nattrafo\)\nb-module.  Then \(\Tor_1^\Nattrafo(\Nattrafo_\sesi, M)=0\).
  \end{lemma}

  \begin{proof}
    Let \(\pi\colon P\to M\) be an epimorphism with a projective \(\Nattrafo\)\nb-module~\(P\), and let \(K\defeq \ker \pi\).  Since projective modules are exact and \(K\into P\prto M\) is a module extension, Proposition~\ref{pro:exact_modules} shows that~\(K\) is exact.  We still have an exact sequence as in~\eqref{eq:Tor_sequence}.

    Since \(K\) and~\(P\) are exact, Lemma~\ref{lem:gen_M_nil} identifies \(K_\sesi(Y)\) and \(P_\sesi(Y)\) in a natural way with subspaces of \(K(Z)\) and \(P(Z)\) for suitable~\(Z\); here we use \(A/\ker(f) \cong \range(f)\) for a group homomorphism \(f\colon A\to B\).  Since the map \(K(Z)\to P(Z)\) is injective, so is the map \(K_\sesi(Y)\to P_\sesi(Y)\).  Hence the map \(K_\sesi\to P_\sesi\) is a monomorphism, forcing \(\Tor^\Nattrafo_1(\Nattrafo_\sesi,M)=0\) by~\eqref{eq:Tor_sequence}.
  \end{proof}

  To finish the proof of the implication (4)\(\Longrightarrow\)(3) in Theorem~\ref{the:projective_module_Nattrafo}, it remains to check that \(M_\sesi(Y)\) is free for all~\(Y\) if~\(M\) is exact and \(M(Y)\) is free for all~\(Y\).  We use Lemma~\ref{lem:gen_M_nil} once again to describe \(M_\sesi(Y)\) as the range of a canonical element in \(\Nattrafo_*(Y,Z)\) for a suitable~\(Z\).  Thus \(M_\sesi(Y)\) is isomorphic to a subgroup of \(M(Z)\), which is a free group by assumption.  Hence \(M_\sesi(Y)\) is free as well.
\end{proof}

\section{Homological algebra in \texorpdfstring{$\KKcat(X)$}{KK(X)}}
\label{sec:filtered_K}

Let~\(X\) be a sober topological space.  We are going to apply to \(\KKcat(X)\) the general machinery for doing homological algebra in triangulated categories discussed in~\cite{Meyer-Nest:Homology_in_KK}.  This theory goes back to the work on relative homological algebra by Samuel Eilenberg and John Coleman Moore (\cite{Eilenberg-Moore:Foundations}), which was carried over to the setting of triangulated categories by Daniel Christensen~\cite{Christensen:Ideals} and Apostolos Beligiannis~\cite{Beligiannis:Relative}.

\subsection{An ideal in \texorpdfstring{$\KKcat(X)$}{KK(X)}}
\label{sec:ideal}

Our starting point is a \emph{rough idea} of the invariant we want to use.  This rough idea is expressed by a homological ideal in the triangulated category.  The ideal~\(\Ideal\) in \(\KKcat(X)\) relevant for us is defined by
\begin{multline}
  \label{eq:def_Ideal}
  \Ideal(A,B) \defeq \bigl\{f\in\KK(X;A,B) \bigm|\\
  \text{\(f_*\colon \K_*\bigl(A(Y)\bigr)\to
    \K_*\bigl(B(Y)\bigr)\) vanishes
    for all \(Y\in\Loclo(X)\)}\bigr\}.
\end{multline}
It makes no difference if we use \(\Loclo(X)\) or~\(\Loclo(X)^*\) here.

We claim that~\(\Ideal\) is a \emph{homological ideal} in the triangulated category \(\KKcat(X)\); that is, it is the kernel (on morphisms) of a \emph{stable homological functor} from \(\KKcat(X)\) to some stable Abelian category; stability means that the functor intertwines the suspension automorphism on \(\KKcat(X)\) with a given suspension automorphism on the target Abelian category.

Our starting point is a bare form of filtrated \(\K\)\nb-theory.  Recall the functors
\[
\FK_Y\colon\KKcat(X)\to \Ab^{\Z/2},\qquad
A\mapsto \K_*\bigl(A(Y)\bigr)
\]
for \(Y\in\Loclo(X)\) from Definition~\ref{def:FK_Y} and let
\[
F\defeq (\FK_Y)_{Y\in\Loclo(X)}\colon
\KKcat(X)\to \prod_{Y\in\Loclo(X)^*} \Ab^{\Z/2},
\qquad
A\mapsto \Bigl(\K_*\bigl(A(Y)\bigr)\Bigr)_{Y\in\Loclo(X)^*}.
\]
The target category \(\prod_{Y\in\Loclo(X)^*} \Ab^{\Z/2}\) of~\(F\) is Abelian and carries an obvious suspension functor that shifts the \(\Z/2\)\nb-grading.  The functor~\(F\) is a stable homological functor, that is, it intertwines the suspension automorphisms and maps exact triangles to long exact sequences.  By definition,
\begin{equation}
  \label{eq:FK_as_kernel}
  \Ideal =  \bigcap_{Y\in\Loclo(X)^*} \ker \FK_Y = \ker F,
\end{equation}
that is, \(f\in\Ideal(A,B)\) if and only if \(F(f)=0\).  Hence~\(\Ideal\) is a homological ideal with defining functor~\(F\).

We also have \(\Ideal=\ker \FK\) with~\(\FK\) as in Definition~\ref{def:filtrated_K}: the two functors \(F\) and~\(\FK\) only differ through their target categories.  For the time being, we pretend that we do not yet know anything about filtrated \(\K\)\nb-theory beyond the ideal~\(\Ideal\) it defines.  The general machinery will \emph{automatically} lead us to the functor~\(\FK\).

As explained in~\cite{Meyer-Nest:Homology_in_KK}, the homological ideal~\(\Ideal\) yields various notions of homological algebra.  The following descriptions of these notions follow from \cite{Meyer-Nest:Homology_in_KK}*{Lemmas 3.2 and 3.9, Definition 3.21}.
\begin{itemize}
\item A morphism \(f\in \KK_*(X;A,B)\) is
  \begin{itemize}
  \item \textit{\(\Ideal\)\nb-epic} if the induced maps \(\K_*\bigl(A(Y)\bigr) \to \K_*\bigl(B(Y)\bigr)\) are surjective for all \(Y\in\Loclo(X)\);

  \item \emph{\(\Ideal\)\nb-monic} if the induced maps \(\K_*\bigl(A(Y)\bigr) \to \K_*\bigl(B(Y)\bigr)\) are injective for all \(Y\in\Loclo(X)\);

  \item an \emph{\(\Ideal\)\nb-equivalence} if the induced maps \(\K_*\bigl(A(Y)\bigr) \to \K_*\bigl(B(Y)\bigr)\) are bijective for all \(Y\in\Loclo(X)\).

  \end{itemize}

\item A homological functor \(F\colon \KKcat(X)\to\Cat\) to some Abelian category~\(\Cat\) is \emph{\(\Ideal\)\nb-exact} if \(F(f)=0\) for all \(f\in\Ideal\); equivalently, \(F\) maps \(\Ideal\)\nb-epimorphisms to epimorphisms or~\(F\) maps \(\Ideal\)\nb-monomorphisms to monomorphisms.

\item An object \(A\inOb\KKcat(X)\) is
  \begin{itemize}
  \item \emph{\(\Ideal\)\nb-contractible} if \(\K_*\bigl(A(Y)\bigr)=0\) for all \(Y\in\Loclo(X)\);

  \item \emph{\(\Ideal\)\nb-projective} if the functor \(\KK_*(X;A,\blank)\) is \(\Ideal\)\nb-exact; equivalently, \(\Ideal(A,B)=0\) for all \(B\inOb\KKcat(X)\), or: any \(\Ideal\)\nb-epimorphism \(B\to A\) splits (see~\cite{Meyer-Nest:Homology_in_KK} for more equivalent characterisations).

  \end{itemize}

\item A chain complex
  \[
  \dotsb \to
   A_{n+1} \xrightarrow{\delta_{n+1}}
   A_n \xrightarrow{\delta_n}
   A_{n-1} \xrightarrow{\delta_{n-1}}
   A_{n-2} \to
   \dotsb
  \]
  in \(\KKcat(X)\)---that is, \(A_n\inOb\KKcat(X)\) and \(\delta_n\in\KK(X;A_n,A_{n-1})\) for all \(n\in\Z\), subject to the condition \(\delta_{n-1}\circ\delta_n=0\)---is \emph{\(\Ideal\)\nb-exact} (in some degree~\(n\)) if the induced chain complexes of \(\Z/2\)-graded Abelian groups
  \[
  \dotsb \to
  \K_*\bigl(A_{n+1}(Y)\bigr) \xrightarrow{(\delta_{n+1})_*}
  \K_*\bigl(A_{n}(Y)\bigr) \xrightarrow{(\delta_{n})_*}
  \K_*\bigl(A_{n-1}(Y)\bigr) \to
  \dotsb
  \]
  are exact (in degree~\(n\)) for all \(Y\in\Loclo(X)\).

\item An \emph{\(\Ideal\)\nb-projective resolution} of \(A\inOb\KKcat(X)\) is an \(\Ideal\)\nb-exact chain complex
  \[
  \dotsb \to
   P_2 \xrightarrow{\delta_2}
   P_1 \xrightarrow{\delta_1}
   P_0 \xrightarrow{\delta_0}
   A \to 0 \to
   \dotsb
  \]
  with \(\Ideal\)\nb-projective entries~\(P_n\) for all \(n\in\N\).
\end{itemize}

We shall soon see that there are \emph{enough \(\Ideal\)\nb-projective objects} in the sense that any object of \(\KKcat(X)\) has an \(\Ideal\)\nb-projective resolution.  Such resolutions are unique up to chain homotopy equivalence once they exist.

We use projective resolutions to define \emph{derived functors} (see \cite{Meyer-Nest:Homology_in_KK}*{Definition 3.27}): just apply the functor to be derived to an \(\Ideal\)\nb-projective resolution and take homology.  In particular, this yields extension groups \(\Ext^n_\Ideal(A,B)\) for all \(A,B\inOb\KKcat(X)\).  Unlike in usual homological algebra, \(\Ext^0_\Ideal(A,B)\) may differ from the morphism space in \(\KKcat(X)\), compare the exact sequence~(4.8) in~\cite{Meyer:Homology_in_KK_II}.

\subsection{Enough projective objects}
\label{sec:enough_projectives}

A strategy to find enough projective objects is outlined in \cite{Meyer-Nest:Homology_in_KK}*{\S3.6}.  The idea is to study the left adjoint functor \(\FK_Y^\lad\) of \(\FK_Y\); this is defined on \(P\inOb\Ab^{\Z/2}\) if there is \(\FK_Y^\lad(P)\inOb\KKcat(X)\) and a natural isomorphism
\begin{equation}
  \label{eq:FK_adjoint}
  \Hom\bigl(P,\FK_Y(B)\bigr) \cong \KK(X;\FK_Y^\lad(P),B)
\end{equation}
for all \(B\inOb\KKcat(X)\).  Notice that \(\FK_Y^\lad\) need not be defined for all~\(P\).

Objects of the form \(\FK_Y^\lad(P)\) are automatically \(\Ideal\)\nb-projective because the functor \(\KK(X;\FK_Y^\lad(P),\blank)\) factors through~\(\FK_Y\) by~\eqref{eq:FK_adjoint} and vanishes on~\(\Ideal\) by~\eqref{eq:FK_as_kernel}.

The simplest case to look for \(\FK^\lad_Y(P)\) is \(P=\Z[0]\) (this means~\(\Z\) in degree~\(0\)).  The defining property of \(\FK^\lad_Y(\Z[0])\) is a natural isomorphism
\[
\KK(X;\FK_Y^\lad(\Z[0]),B)
\cong \Hom\bigl(\Z[0],\FK_Y(B)\bigr)
\cong \FK_{Y,0}(B)
= \K_0\bigl(B(Y)\bigr).
\]
In other words, \(\FK_Y^\lad(\Z[0])\) must represent the covariant functor~\(\FK_Y\).  Theorem~\ref{the:representability} provides such representing objects, and yields the following:

\begin{proposition}
  \label{pro:adjoint_FK_Y}
  For any \(Y\in\Loclo(X)\), the adjoint functor \(\FK_Y^\lad\) is defined on a \(\Z/2\)-graded Abelian group \(G=G_0\oplus G_1\) if \(G_0\) and~\(G_1\) are free and countable.  More precisely,
  \[
  \FK_Y^\lad\left(\bigoplus_{i\in I}
    \Z[\varepsilon_i]\right)
  = \bigoplus_{i\in I} \Repr_Y[\varepsilon_i],
  \]
  where~\(I\) is a countable set and \(\varepsilon_i\in\Z/2\) for all \(i\in I\).
\end{proposition}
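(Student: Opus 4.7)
The plan is to build \(\FK_Y^\lad(G)\) explicitly when \(G\) is free and countable, and then to rule out the other cases by exploiting that any representable functor \(\KK(X; A, \blank)\) is cohomological.

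For the \emph{sufficiency} direction, given \(G = \bigoplus_{i\in I} \Z[\varepsilon_i]\) with \(I\) countable and \(\varepsilon_i\in\Z/2\), I would set \(A_G \defeq \bigoplus_{i\in I} \Repr_Y[\varepsilon_i]\), where \(\Repr_Y[1]\defeq \Sigma\Repr_Y\). This is a legitimate object of \(\KKcat(X)\) because countable direct sums of separable \(\Cst\)\nb-algebras over~\(X\) remain separable. The adjunction isomorphism then factors as
\[
\KK(X; A_G, B) \cong \prod_{i\in I} \KK(X; \Repr_Y[\varepsilon_i], B) \cong \prod_{i\in I} \FK_Y(B)[-\varepsilon_i] \cong \Hom\bigl(G, \FK_Y(B)\bigr),
\]
using the universal property of countable direct sums in~\(\KKcat(X)\), Theorem~\ref{the:representability} together with the compatibility of \(\FK_Y\) with suspension, and the universal property of free graded Abelian groups.

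For the \emph{necessity}, suppose \(\FK_Y^\lad(G)\) exists with value \(A_G\). The functor \(B\mapsto \Hom(G, \FK_Y(B))\) is then isomorphic to the cohomological functor \(\KK(X; A_G, \blank)\) and so must convert exact triangles into long exact sequences. I would test this against the mapping-cone triangle for multiplication by \(n\ge 2\) on~\(\Repr_Y\): a short computation with Theorem~\ref{the:representability} gives \(\FK_Y(C_n) = \Z/n[1]\), so the six-term sequence yields \(\Z \xrightarrow{n} \Z \to \Z/n\) on the \(\FK_Y\)-side. Comparison with the standard six-term \(\Hom\)--\(\Ext\) sequence then forces \(\Ext^1(G, \Z) = 0\), which rules out torsion in~\(G\); together with the classical Pontryagin theorem for countable torsion-free Abelian groups, this forces \(G\) to be free. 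Countability of~\(G\) is in turn forced because \(A_G\) must be separable, so \(\FK_Y(A_G)\) is countable; the unit \(\eta\colon G\to \FK_Y(A_G)\) of the adjunction can be shown to be injective via the universal property (testing \(\eta\) against maps \(G \to \FK_Y(B)\) for varying~\(B\)), which bounds \(\lvert G\rvert\le\aleph_0\).

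The main obstacle is the necessity direction. The existence direction is essentially a direct application of Theorem~\ref{the:representability} together with Kasparov additivity, but the necessity of freeness rests on Pontryagin's characterisation of countable free Abelian groups via \(\Ext\)-vanishing, and the necessity of countability requires carefully extracting the separability constraint on \(A_G\) from the adjunction unit.
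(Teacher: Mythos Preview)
Your sufficiency argument matches the paper's: the paper simply observes that \(\FK_Y^\lad(\Z[0])=\Repr_Y\) by Theorem~\ref{the:representability}, that stability gives \(\FK_Y^\lad(\Z[1])=\Repr_Y[1]\), and that left adjoints commute with direct sums, so the adjoint exists on all countable free graded groups.  Your write-up spells out the same chain of isomorphisms explicitly.

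The paper \emph{omits} the necessity direction entirely, stating ``We omit the proof that the domain of~\(\FK_Y^\lad\) is not bigger because this is irrelevant here.''  So your attempt at necessity goes beyond what the paper does.  Two substantive issues with it, though:

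First, your test object~\(\Repr_Y\) need not satisfy \(\FK_Y(\Repr_Y)=\Z[0]\).  By~\eqref{eq:nattrafo} one has \(\FK_Y(\Repr_Y)=\K^*\bigl(\Ch(Y)\bigr)\), which can be any finitely generated abelian group (the paper remarks on this just after~\eqref{eq:nattrafo_II}).  The fix is easy: use \(i_x(\C)\) for some \(x\in Y\), which does give \(\FK_Y(i_x(\C))=\Z[0]\).

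Second, and more seriously, the multiplication-by-\(n\) triangle does \emph{not} force \(\Ext^1(G_i,\Z)=0\).  The connecting map \(\FK_Y(C_n)\to\FK_Y(i_x\C)[1]\) is the zero map (it goes from \(\Z/n\) in one degree to \(\Z\) in the other), so by naturality of the adjunction the six-term sequence for \(\KK(X;A_G,\blank)\) degenerates to \(\Hom(G_i,\Z/n)\cong\Hom(G_i,\Z)/n\).  Comparing with the genuine \(\Hom\)--\(\Ext\) sequence, this only kills the \(n\)-torsion in \(\Ext^1(G_i,\Z)\); the group \(G_i=\mathbb{Q}\) passes every such test while \(\Ext^1(\mathbb{Q},\Z)\neq 0\).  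To close the gap you need richer test triangles---for instance, realise an arbitrary short exact sequence \(A\rightarrowtail B\twoheadrightarrow C\) of countable groups as \(\FK_Y\) of an exact triangle (via \(i_x\) and the ordinary UCT), observe that the induced boundary map on \(\FK_Y\) vanishes, and conclude that \(\Hom(G_i,\blank)\) is exact on countable abelian groups; together with the countability of~\(G_i\) this yields freeness directly, without invoking Pontryagin or Stein.  (Also, your argument should establish countability \emph{before} freeness, since any route through Whitehead-type criteria requires it; your countability argument via injectivity of the unit~\(\eta\) is fine and does not depend on freeness.)
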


\begin{proof}
  We have just observed that \(\FK_Y^\lad(\Z[0])=\Repr_Y\).  Since~\(\FK_Y\) is stable, this implies \(\FK_Y^\lad(\Z[1])=\Repr_Y[1]\).  It is a general feature of left adjoint functors that they commute with direct sums.  Since countable direct sums exist in \(\KKcat(X)\), we get the existence of~\(\FK_Y^\lad\) on any free countable \(\Z/2\)-graded Abelian group.
\end{proof}

\begin{corollary}
  \label{cor:enough_projectives}
  There are enough \(\Ideal\)\nb-projective objects in \(\KKcat(X)\), and the class of \(\Ideal\)\nb-projective objects in \(\KKcat(X)\) is generated by the objects~\(\Repr_Y\) for \(Y\in\Loclo(X)^*\).  More precisely, any \(\Ideal\)\nb-projective objects is a retract of a direct sum of suspensions of these objects.
\end{corollary}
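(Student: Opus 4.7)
The plan is to build an $\Ideal$\nb-epimorphism from a direct sum of suspensions of the representing objects $\Repr_Y$ onto an arbitrary \(A\inOb\KKcat(X)\), and then use the splitting characterisation of $\Ideal$\nb-projectivity to conclude the retract statement.

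First I would fix $A\inOb\KKcat(X)$ and, for each $Y\in\Loclo(X)^*$, choose a free countable $\Z/2$-graded Abelian group $P_Y$ together with a surjection $\pi_Y\colon P_Y \twoheadrightarrow \FK_Y(A) = \K_*\bigl(A(Y)\bigr)$; this is possible because $A(Y)$ is separable, so $\K_*(A(Y))$ is countable. By Proposition~\ref{pro:adjoint_FK_Y}, the left adjoint $\FK_Y^\lad(P_Y)$ exists and is a countable direct sum of copies of $\Repr_Y$ and $\Repr_Y[1]$. The adjunction~\eqref{eq:FK_adjoint} translates $\pi_Y$ into a morphism $\tilde\pi_Y\colon \FK_Y^\lad(P_Y)\to A$ in $\KKcat(X)$, and the standard unit-counit identity shows that $\FK_Y(\tilde\pi_Y)$ is surjective (indeed, $\FK_Y(\tilde\pi_Y)\circ \eta_{P_Y} = \pi_Y$, where $\eta$ is the unit).

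Next I would form the coproduct $P \defeq \bigoplus_{Y\in\Loclo(X)^*} \FK_Y^\lad(P_Y)$ in $\KKcat(X)$ and assemble the $\tilde\pi_Y$ into a single morphism $\tilde\pi\colon P\to A$. Since each summand separately surjects onto $\FK_Y(A)$, the induced map $\FK_Z(P)\to \FK_Z(A)$ is surjective for every $Z\in\Loclo(X)^*$, so $\tilde\pi$ is $\Ideal$\nb-epic. Moreover, each $\FK_Y^\lad(P_Y)$ is $\Ideal$\nb-projective (the functor $\KK(X;\FK_Y^\lad(P_Y),\blank)$ factors through $\FK_Y$ by~\eqref{eq:FK_adjoint} and hence vanishes on $\Ideal$ by~\eqref{eq:FK_as_kernel}), and direct sums of $\Ideal$\nb-projectives are $\Ideal$\nb-projective because the defining condition $\Ideal(P,\blank)=0$ is preserved under coproducts. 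This establishes that $\KKcat(X)$ has enough $\Ideal$\nb-projectives, with $\tilde\pi$ exhibiting the required projective cover of $A$ as a direct sum of suspensions of the $\Repr_Y$.

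Finally, suppose $Q\inOb\KKcat(X)$ is $\Ideal$\nb-projective. Applying the construction above to $Q$ produces an $\Ideal$\nb-epimorphism $\tilde\pi\colon P\to Q$ with $P$ a direct sum of suspensions of objects $\Repr_Y$. By the splitting characterisation of $\Ideal$\nb-projectivity listed in~\S\ref{sec:ideal}, any $\Ideal$\nb-epimorphism onto $Q$ splits, so $Q$ is a retract of $P$ as claimed. No step is genuinely difficult: the only substantive ingredient beyond formal adjunction nonsense is the existence of the representing objects from Theorem~\ref{the:representability}, which is already granted, and the observation that countability of $\K_*\bigl(A(Y)\bigr)$ brings us inside the domain of $\FK_Y^\lad$ described in Proposition~\ref{pro:adjoint_FK_Y}.
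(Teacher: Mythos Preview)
Your argument is correct and is essentially the same as the paper's, only more explicit: the paper's proof is a one-line citation of Proposition~\ref{pro:adjoint_FK_Y} together with \cite{Meyer-Nest:Homology_in_KK}*{Proposition 3.37}, and what you have written is precisely the content of that cited proposition specialised to the present ideal~\(\Ideal\). In particular, the construction of an \(\Ideal\)\nb-epimorphism from \(\bigoplus_Y \FK_Y^\lad(P_Y)\) onto~\(A\) and the splitting argument for \(\Ideal\)\nb-projective~\(Q\) are exactly how enough projectives and the retract description are obtained in the general machinery.
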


\begin{proof}
  This follows from Proposition~\ref{pro:adjoint_FK_Y} and \cite{Meyer-Nest:Homology_in_KK}*{Proposition 3.37}.
\end{proof}

Often we do not need retracts here, that is, any \(\Ideal\)\nb-projective object is a direct sum of suspensions of~\(\Repr_Y\) for \(Y\in\Loclo(X)^*\); for the totally ordered spaces studied in~\S\ref{sec:example_Nattrafo}, this follows from Theorem~\ref{the:projective_module_Nattrafo}.

Since our ideal~\(\Ideal\) is compatible with countable direct sums, the \(\Ideal\)\nb-contractible objects form a localising subcategory of \(\KKcat(X)\), that is, they form a class~\(\Null_\Ideal\) of objects that is closed under countable direct sums, retracts, isomorphism, exact triangles, and suspensions.  Furthermore, \(\Null_\Ideal\) is the complement of the localising subcategory that is generated by the \(\Ideal\)\nb-projective objects.  These two subcategories contain much less information than the ideal itself.  Roughly speaking, they will be the same for any reasonable choice of invariant on \(\KKcat(X)\) of \(\K\)\nb-theoretic nature.

\begin{proposition}
  \label{pro:bootstrap_proj_Ideal}
  The localising subcategory that is generated by the \(\Ideal\)\nb-projective objects is the bootstrap category \(\Bootstrap(X)\).  It consists of all objects of \(\KKcat(X)\) that are \(\KK(X)\)-equivalent to a tight, nuclear, purely infinite, stable, separable \(\Cst\)\nb-algebra over~\(X\) whose simple subquotients belong to the bootstrap category \(\Bootstrap\subseteq\KKcat\).
\end{proposition}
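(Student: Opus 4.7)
The plan is to identify the localising subcategory~$\Tri$ generated by the $\Ideal$-projective objects with $\Bootstrap(X)$, and then to invoke \cite{Meyer-Nest:Bootstrap} for the concrete $\Cst$\nb-algebraic description.

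First I would establish $\Bootstrap(X) \subseteq \Tri$. By Corollary~\ref{cor:enough_projectives}, $\Tri$ is generated by the objects $\Repr_Y$ with $Y \in \Loclo(X)^*$. The proof of Theorem~\ref{the:representability} already exhibits a $\KK(X)$\nb-equivalence $\Repr_{U_x} \simeq i_x(\C)$ arising from the canonical contraction of the starlike complex $\Ch(U_x)$ toward its minimal point~$x$. Thus every generator~$i_x(\C)$ of $\Bootstrap(X)$ lies in~$\Tri$, and since $\Bootstrap(X)$ is the smallest localising subcategory containing these, the inclusion follows.

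For the converse inclusion $\Tri \subseteq \Bootstrap(X)$, it suffices to show $\Repr_Y \in \Bootstrap(X)$ for all $Y \in \Loclo(X)$. Let $\Good' \subseteq \Loclo(X)$ denote the class of such~$Y$. For any $U \in \Open(Y)$, the extension triangle~\eqref{eq:Repr_extension_triangle} lies in $\KKcat(X)$, and since $\Bootstrap(X)$ is a localising subcategory, $\Good'$ enjoys the two-out-of-three property: whenever two of $U$, $Y \setminus U$, $Y$ belong to $\Good'$, so does the third. Since $\Repr_{U_x} \simeq i_x(\C) \in \Bootstrap(X)$, we have $U_x \in \Good'$ for every $x \in X$. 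I would then transcribe the induction used at the end of~\S\ref{sec:proof_representability}: the identity $(U \cup V) \setminus U = V \setminus (U \cap V)$ combined with two-out-of-three propagates membership across finite unions of open subsets, inductively filling out $\Open(X)$; and since every locally closed subset is the difference of two open subsets, two-out-of-three again yields $\Good' = \Loclo(X)$.

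The second assertion---identifying $\Bootstrap(X)$ with the class of objects $\KK(X)$\nb-equivalent to a tight, nuclear, purely infinite, stable, separable $\Cst$\nb-algebra over~$X$ whose simple subquotients lie in the classical bootstrap class $\Bootstrap \subseteq \KKcat$---is not a statement of homological algebra; I would simply quote the corresponding results of~\cite{Meyer-Nest:Bootstrap} already alluded to in the introduction. The only part of the argument that involves any real work is the two-step induction on open and locally closed subsets, but it is a direct adaptation of the proof of Theorem~\ref{the:representability} with the class $\Good$ of ``representability-good'' subsets replaced by $\Good'$; I do not expect a genuine obstacle there.
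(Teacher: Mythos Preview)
Your proposal is correct and follows essentially the same approach as the paper: both directions rest on the $\KK(X)$\nb-equivalence $\Repr_{U_x}\simeq i_x(\C)$ from the proof of Theorem~\ref{the:representability} and on the two-out-of-three property of the extension triangles~\eqref{eq:Repr_extension_triangle}, with the second assertion quoted from \cite{Meyer-Nest:Bootstrap}*{Corollary~5.5}. The paper's write-up is merely terser---it points to the proof of the Representability Theorem rather than re-running the $\Good'$ induction you spell out---but the logical content is identical.
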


\begin{proof}
  By definition, \(\Bootstrap(X)\) is the localising subcategory of \(\KKcat(X)\) that is generated by the objects \(i_x(\C)\) for \(x\in X\), see~\cite{Meyer-Nest:Bootstrap}.  These generators are \(\Ideal\)\nb-projective because they represent the functors \(\FK_{U_x}\), compare the proof of the Representability Theorem~\ref{the:representability}.  The proof of this theorem also shows that the representing objects~\(\Repr_Y\) belong to the triangulated subcategory of \(\KKcat(X)\) generated by \(\Repr_{U_x}\) for \(x\in X\) and hence to \(\Bootstrap(X)\).  Now Corollary~\ref{cor:enough_projectives} shows that all \(\Ideal\)\nb-projective objects belong to \(\Bootstrap(X)\).  Hence the localising subcategory they generate is contained in the bootstrap class.

  Conversely, since the generators of the bootstrap class \(i_x(\C)\) are \(\Ideal\)\nb-projective, the localising subcategory generated by the \(\Ideal\)\nb-projective objects must contain the whole bootstrap class.  This yields the first statement.  The second one is contained in \cite{Meyer-Nest:Bootstrap}*{Corollary~5.5}.
\end{proof}

\subsection{The universality of filtrated \texorpdfstring{$\K$}{K}-theory}
\label{sec:FK_universal}

The next step in the general programme is to determine the \emph{universal} defining functor for~\(\Ideal\).  This functor is characterised by the universal property that it is \(\Ideal\)\nb-exact and stable homological and that any \(\Ideal\)\nb-exact homological functor on \(\KKcat(X)\) factors through it uniquely (up to natural isomorphism).

The advantage of using the universal functor is that it describes \(\Ideal\)\nb-projective resolutions and the associated \(\Ideal\)\nb-derived functors in \(\KKcat(X)\) by projective resolutions and derived functors in its target Abelian category.  This is the crucial step to compute these derived functors.

In the presence of enough projective objects, \cite{Meyer-Nest:Homology_in_KK}*{Theorem 3.39} characterises the universal functor by an adjointness property.  In our case, this yields:

\begin{theorem}
  \label{the:FK_universal}
  The filtrated \(\K\)\nb-theory functor \(\FK\colon \KKcat(X)\to \CMod{\Nattrafo}\) is the universal \(\Ideal\)\nb-exact stable homological functor; here \(\CMod{\Nattrafo}\) denotes the category of all countable graded \(\Nattrafo\)\nb-modules.
\end{theorem}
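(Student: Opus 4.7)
The plan is to deduce this from the general characterisation of universal \(\Ideal\)\nb-exact stable homological functors in \cite{Meyer-Nest:Homology_in_KK}*{Theorem 3.39}, combined with the Representability Theorem~\ref{the:representability}, Corollary~\ref{cor:enough_projectives}, and the computation~\eqref{eq:nattrafo}.

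First I would verify the two easy parts: that \(\FK\) is a stable homological functor and that it is \(\Ideal\)\nb-exact. Stability and the homological property were already recorded after Definition~\ref{def:FK_Y}; each component \(\FK_Y\) intertwines suspension with grading shift and turns exact triangles into six-term exact sequences, and this behaviour is inherited by the joint functor into \(\CMod{\Nattrafo}\). The \(\Ideal\)\nb-exactness is immediate from~\eqref{eq:FK_as_kernel}, since by construction \(\Ideal=\ker \FK\).

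The key step is invoking \cite{Meyer-Nest:Homology_in_KK}*{Theorem 3.39}: because Corollary~\ref{cor:enough_projectives} guarantees enough \(\Ideal\)\nb-projective objects, that theorem realises the universal \(\Ideal\)\nb-exact stable homological functor as the restricted Yoneda functor \(A \mapsto \KK_*(X;\blank,A)\) taking values in countable additive grading-preserving functors on a chosen generating subcategory \(\Proj_\Ideal\subseteq\KKcat(X)\) of \(\Ideal\)\nb-projective objects. Corollary~\ref{cor:enough_projectives} allows us to take \(\Proj_\Ideal\) to be the full subcategory on \(\{\Repr_Y\mid Y\in\Loclo(X)^*\}\). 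By~\eqref{eq:nattrafo}, the opposite of this full subcategory is equivalent, as a \(\Z/2\)-graded pre-additive category, to \(\Nattrafo\) restricted to \(\Loclo(X)^*\); and the remaining objects of \(\Loclo(X)\) are direct sums in \(\Nattrafo\) of objects in \(\Loclo(X)^*\), so additive functors on either version agree. Thus the abstract target category in \cite{Meyer-Nest:Homology_in_KK}*{Theorem 3.39} is canonically isomorphic to \(\CMod{\Nattrafo}\); countability on the module side matches countability of the \(\K\)\nb-groups \(\K_*(A(Y))\) for separable \(A\).

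Under this identification the universal functor sends \(A\) to \(\bigl(\KK_*(X;\Repr_Y,A)\bigr)_{Y\in\Loclo(X)^*}\), which by Theorem~\ref{the:representability} equals \(\bigl(\K_*(A(Y))\bigr)_{Y\in\Loclo(X)^*}=\FK(A)\). The only thing that still requires checking is that the \(\Nattrafo\)\nb-action transported through this isomorphism agrees with the tautological \(\Nattrafo\)\nb-action of Definition~\ref{def:Nattrafo_module}; this is a direct application of the Yoneda Lemma, using naturality of the isomorphism in Theorem~\ref{the:representability} in the variable \(A\), so that composition in \(\Proj_\Ideal^\op\) translates to composition of natural transformations in \(\Nattrafo\). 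The main obstacle is the bookkeeping needed to match the abstract universal target of \cite{Meyer-Nest:Homology_in_KK}*{Theorem 3.39} with \(\CMod{\Nattrafo}\); once that identification is in place, the conclusion is essentially formal.
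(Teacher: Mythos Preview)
Your proposal is correct and follows essentially the same route as the paper: invoke \cite{Meyer-Nest:Homology_in_KK}*{Theorem 3.39}, use the representing objects \(\Repr_Y\) from Theorem~\ref{the:representability} as the generating \(\Ideal\)\nb-projectives (Corollary~\ref{cor:enough_projectives}), and identify the resulting functor category with \(\CMod{\Nattrafo}\) via~\eqref{eq:nattrafo}. The paper's proof is organised slightly differently---it first states and proves the claim for an abstract triangulated category with a generating set \(\Gen\), then specialises---and it spells out more explicitly the specific hypotheses of \cite{Meyer-Nest:Homology_in_KK}*{Theorem 3.39} that must be checked (idempotents split in \(\KKcat(X)\), the target category has enough projectives, the left adjoint \(\FK^\lad\) is defined on all projectives and satisfies \(\FK\circ\FK^\lad(P)\cong P\)); you have folded these into ``bookkeeping'', which is fair but worth flagging.
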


The ring of natural transformations \(\Nattrafo\) comes in automatically at this point.

\begin{proof}
  This is best explained as a special case of a general result on certain homological ideals.  Let~\(\Tri\) be any triangulated category with countable direct sums, and let~\(\Gen\) be an at most countable set of objects of~\(\Tri\).  Let~\(\Ideal_\Gen\) be the stable homological ideal defined by the functor
  \[
  F_\Gen\colon \Tri\to \prod_{G\in\Gen} \Ab^\Z,\qquad
  A \mapsto \bigl(\Tri_*(G,A)\bigr)_{G\in\Gen}.
  \]
  We assume that \(F_\Gen(A)\) is countable for all \(A\inOb\Tri\).

  We are dealing with the case where \(\Tri=\KKcat(X)\) and \(\Gen=\{\Repr_Y\mid Y\in\Loclo(X)^*\}\); Theorem~\ref{the:representability} identifies \(\Tri_*(\Repr_Y,A) = \KK_*(X;\Repr_Y,A)\cong \K_*\bigl(A(Y)\bigr) = \FK_Y(A)\) for all \(Y\in\Loclo(X)^*\), so that \(\Ideal_\Gen=\Ideal\) with~\(\Ideal\) as in~\eqref{eq:def_Ideal}.

  Viewing~\(\Gen\) as a full subcategory of~\(\Tri\), it becomes a \(\Z\)\nb-graded pre-additive category, so that we get a corresponding category \(\CMod{\Gen^\op}\) of countable graded \emph{right} modules.  We can enrich the functor~\(F_\Gen\) to a functor
  \[
  F_\Gen'\colon \Tri\to\CMod{\Gen^\op}
  \]
  because the composition in~\(\Tri\) provides maps
  \[
  \Tri_*(G',A)\otimes \Tri_*(G,G') \to \Tri_*(G,A)
  \]
  for all \(G,G'\in\Gen\), \(A\inOb\Tri\), which form a right \(\Gen\)\nb-module structure on \(\bigl(\Tri_*(G,A)\bigr)_{G\in\Gen}\).  We claim that the functor \(F_\Gen'\) is the universal \(\Ideal_\Gen\)\nb-exact functor.

  In the case at hand, our description of the natural transformations \(\FK_Y\Rightarrow\FK_Z\) in \S\ref{sec:representability} means that \(\CMod{\Gen^\op}=\CMod{\Nattrafo}\) and \(F_\Gen'=\FK\) is filtrated \(\K\)\nb-theory as defined in Definition~\ref{def:filtrated_K}.  Hence it suffices to establish the claim above to finish the proof of Theorem~\ref{the:FK_universal}.

  To do this, we check the conditions in \cite{Meyer-Nest:Homology_in_KK}*{Theorem 3.39}.  Idempotent morphisms in \(\KKcat(X)\) split because this happens in any triangulated category with countable direct sums (see~\cite{Neeman:Triangulated}).  Call \(F'_\Gen(G)= \Tri(\blank,G)\) for \(G\in\Gen\) the \emph{free} \(\Gen^\op\)-module on~\(G\).  Direct sums of free modules are projective, and any object of \(\CMod{\Gen^\op}\) is a quotient of a countable direct sum of free modules.  Hence \(\CMod{\Gen^\op}\) has enough projective objects.  Moreover,
  \[
  \Hom_{\Gen^\op}\bigl(F'_\Gen(G), F'_\Gen(A)\bigr)
  \cong F'_\Gen(A)(G)
  = \Tri(G,A)
  \]
  shows that the left adjoint~\(F^\lad\) of \(F\defeq F'_\Gen\) maps~\(F'_\Gen(G)\) to~\(G\inOb\Tri\).  Since the domain of~\(F^\lad\) is closed under suspensions, countable direct sums, and retracts, the adjoint is defined on all projective modules.  Furthermore, \(F\circ F^\lad(P) \cong P\) holds for free modules and hence for all projective modules~\(P\).  Having checked all the hypotheses of \cite{Meyer-Nest:Homology_in_KK}*{Theorem 3.39}, we can conclude that~\(F'_\Gen\) is indeed universal.
\end{proof}

Since \(\FK\colon \KKcat(X)\to\CMod{\Nattrafo}\) is universal, \cite{Meyer-Nest:Homology_in_KK}*{Theorem 3.41} now tells us, roughly speaking, that homological algebra in \(\KKcat(X)\) with respect to~\(\Ideal\) is equivalent to homological algebra in the Abelian category \(\CMod{\Nattrafo}\):
\begin{itemize}
\item An object~\(A\) of \(\KKcat(X)\) is \(\Ideal\)\nb-projective if and only if \(\FK(A)\in\CMod{\Nattrafo}\) is projective and
  \[
  \KK_*(X;A,B) \cong \Hom_\Nattrafo\bigl(\FK(A),\FK(B)\bigr)
  \]
  for all \(B\inOb\KKcat(X)\).

  Another equivalent condition is that \(\FK(A)\in\CMod{\Nattrafo}\) is projective and~\(A\) belongs to the localising subcategory generated by the \(\Ideal\)\nb-projective objects; the latter agrees with the bootstrap class by Proposition~\ref{pro:bootstrap_proj_Ideal}.

\item The functor \(\FK\) and its partially defined left adjoint~\(\FK^\lad\) restrict to an equivalence of categories between the subcategories of \(\Ideal\)\nb-projective objects in \(\KKcat(X)\) and of projective objects in \(\CMod{\Nattrafo}\).

\item For any \(A\inOb\KKcat(X)\), the functors \(\FK\) and~\(\FK^\lad\) induce bijections between isomorphism classes of \(\Ideal\)\nb-projective resolutions of~\(A\) and isomorphism classes of projective resolutions of \(\FK(A)\) in \(\CMod{\Nattrafo}\).  That is, a projective resolution in \(\CMod{\Nattrafo}\) lifts to a unique \(\Ideal\)\nb-projective resolution in \(\KKcat(X)\).  This provides the ``geometric resolutions'' that are used in connection with the usual Universal Coefficient Theorem for \(\KK\).

\item For all \(n\in\N\), there is a natural isomorphism
  \[
  \Ext_\Ideal^n(A,B)
  \cong \Ext^n_\Nattrafo\bigl(\FK(A),\FK(B)\bigr),
  \]
  where the right hand side denotes extension groups in the Abelian category \(\CMod{\Nattrafo}\).

\item For any homological functor \(G\colon \KKcat(X)\to\Cat\), there is a unique right-exact functor \(\bar{G}\colon \CMod{\Nattrafo}\to\Cat\) with \(\bar{G}\circ\FK(P)= G(P)\) for all \(\Ideal\)\nb-projective~\(P\).  The left derived functors of~\(G\) with respect to~\(\Ideal\) are \(\Left_n\bar{G}\circ \FK\) for \(n\in\N\), where \(\Left_n\bar{G}\colon \CMod{\Nattrafo}\to\Cat\) denotes the \(n\)th left derived functor of~\(\bar{G}\).
\end{itemize}

\subsection{The Universal Coefficient Theorem}
\label{sec:UCT_conditional}

In the general theory, the next step is to construct a spectral sequence whose \(E^2\)\nb-term involves the extension groups \(\Ext_\Ideal^n(A[m],B)\); it converges---in favourable cases---to \(\KK_*(X;A,B)\).  This spectral sequence is constructed in \cites{Christensen:Ideals, Meyer:Homology_in_KK_II}.  Since we aim for an exact sequence, not for a spectral sequence, we only need the special case considered in \cite{Meyer-Nest:Homology_in_KK}*{Theorem 4.4}.  This provides the Universal Coefficient Theorem we want under the assumption that \(\FK(A)\) has a projective resolution of length~\(1\) in \(\CMod{\Nattrafo}\):

\begin{theorem}
  \label{the:UCT_conditional}
  Let \(A,B\inOb\KKcat(X)\).  Suppose that \(\FK(A)\inOb\CMod{\Nattrafo}\) has a projective resolution of length~\(1\) and that \(A\inOb\Bootstrap(X)\).  Then there are natural short exact sequences
  \[
  \Ext^1_\Nattrafo\bigl(\FK(A)[j+1],\FK(B)\bigr)
  \into \KK_j(X;A,B)
  \prto \Hom_\Nattrafo\bigl(\FK(A)[j],\FK(B)\bigr)
  \]
  for \(j\in\Z/2\), where \(\Hom_\Nattrafo\) and \(\Ext^1_\Nattrafo\) denote the morphism and extension groups in the Abelian category \(\CMod{\Nattrafo}\) and \([j]\) and \([j+1]\) denote degree shifts.
\end{theorem}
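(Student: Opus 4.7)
The plan is to invoke the general Universal Coefficient Theorem \cite{Meyer-Nest:Homology_in_KK}*{Theorem 4.4}, applied to the homological ideal $\Ideal$ defined in~\eqref{eq:def_Ideal}. That theorem produces a short exact sequence
\[
\Ext^1_\Ideal\bigl(A[j+1],B\bigr) \into \KK_j(X;A,B) \prto \Hom_\Ideal\bigl(A[j],B\bigr)
\]
under three hypotheses: that $A$ lies in the localising subcategory generated by the $\Ideal$-projective objects, that $A$ admits an $\Ideal$-projective resolution of length~$1$ in $\KKcat(X)$, and an identification of the $\Ideal$-derived terms with something one can compute. The first is immediate from Proposition~\ref{pro:bootstrap_proj_Ideal}, since $\Bootstrap(X)$ coincides with that localising subcategory.

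For the second hypothesis, start with the given projective resolution $0 \to P_1 \xrightarrow{\pi} P_0 \xrightarrow{\varepsilon} \FK(A) \to 0$ in $\CMod{\Nattrafo}$. The equivalence between $\Ideal$-projective objects in $\KKcat(X)$ and projective objects in $\CMod{\Nattrafo}$ furnished by $\FK$ and $\FK^\lad$ (one of the consequences of universality listed after Theorem~\ref{the:FK_universal}) lifts $P_0, P_1$ to $\Ideal$-projective objects $\tilde P_0, \tilde P_1 \inOb \KKcat(X)$ and the map $\pi$ to a morphism $\tilde\pi \colon \tilde P_1 \to \tilde P_0$. The universality also yields a lift $\tilde\varepsilon \colon \tilde P_0 \to A$ of $\varepsilon$, and since $\FK(\tilde\varepsilon \circ \tilde\pi) = \varepsilon \circ \pi = 0$ and $\tilde P_1$ is $\Ideal$-projective, necessarily $\tilde\varepsilon \circ \tilde\pi = 0$. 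Completing $\tilde\varepsilon$ to an exact triangle and using that $\FK$ applied to this triangle collapses---by injectivity of $\pi$---to the original short exact sequence $0 \to P_1 \to P_0 \to \FK(A) \to 0$, the triangle
\[
\tilde P_1 \xrightarrow{\tilde\pi} \tilde P_0 \xrightarrow{\tilde\varepsilon} A \to \Sigma \tilde P_1
\]
is the desired $\Ideal$-projective resolution of $A$ of length~$1$.

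For the third hypothesis, the universality of $\FK$ provides natural isomorphisms $\Ext^n_\Ideal(A,B) \cong \Ext^n_\Nattrafo\bigl(\FK(A),\FK(B)\bigr)$ and $\Hom_\Ideal(A,B) \cong \Hom_\Nattrafo\bigl(\FK(A),\FK(B)\bigr)$; these are also among the consequences of universality summarised after Theorem~\ref{the:FK_universal}. Concretely, applying $\KK_*(X;\blank,B)$ to the exact triangle constructed above gives a six-term exact sequence whose terms $\KK_*(X;\tilde P_i, B)$ are, by universality, isomorphic to $\Hom_\Nattrafo(P_i, \FK(B))$; the induced connecting map is precisely $\pi^*$, so its kernel and cokernel compute $\Hom_\Nattrafo\bigl(\FK(A),\FK(B)\bigr)$ and $\Ext^1_\Nattrafo\bigl(\FK(A),\FK(B)\bigr)$ from the length-one resolution, and the six-term sequence splits into the two short exact sequences asserted.

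The main obstacle is not conceptual but bookkeeping: one must verify carefully that the lifts $\tilde\pi, \tilde\varepsilon$ really assemble into an exact triangle whose image under $\FK$ reproduces the chosen projective resolution, so that the sequence obtained from $\KK_*(X;\blank,B)$ genuinely computes the $\Hom$ and $\Ext^1$ groups over $\Nattrafo$. Since all of this is packaged in the machinery prepared in Sections~\ref{sec:ideal}--\ref{sec:FK_universal}, the proof is essentially a matter of orchestrating these tools rather than introducing new ideas.
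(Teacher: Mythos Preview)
Your proposal is correct and follows the same route as the paper: the paper simply invokes \cite{Meyer-Nest:Homology_in_KK}*{Theorem 4.4}, noting that the bootstrap hypothesis matches the required localising subcategory by Proposition~\ref{pro:bootstrap_proj_Ideal} and that the \(\Ideal\)-derived terms agree with \(\Hom_\Nattrafo\) and \(\Ext^1_\Nattrafo\) by the universality of~\(\FK\) established in Theorem~\ref{the:FK_universal}. You have spelled out in more detail the lifting of the length-\(1\) resolution, but this is exactly the content of the bijection between \(\Ideal\)-projective resolutions in \(\KKcat(X)\) and projective resolutions in \(\CMod{\Nattrafo}\) recorded after Theorem~\ref{the:FK_universal}.
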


The bootstrap class appears here because of Proposition~\ref{pro:bootstrap_proj_Ideal}, which identifies it with the localising subcategory generated by the \(\Ideal\)\nb-projective objects.

\begin{corollary}
  \label{cor:lift_iso}
  Let \(A,B\inOb\Bootstrap(X)\) and suppose that both \(\FK(A)\) and \(\FK(B)\) have projective resolutions of length~\(1\) in \(\CMod{\Nattrafo}\).  Then any morphism \(\FK(A)\to\FK(B)\) in \(\CMod{\Nattrafo}\) lifts to an element in \(\KK_0(X;A,B)\), and an isomorphism \(\FK(A)\cong\FK(B)\) lifts to an isomorphism in \(\Bootstrap(X)\).
\end{corollary}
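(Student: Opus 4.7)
The plan is to deduce both statements directly from Theorem~\ref{the:UCT_conditional}. For the first part, I would apply the UCT to the pair \((A,B)\) in degree \(j=0\): the hypotheses on \(\FK(A)\) and on \(A\) are exactly those required, so exactness on the right of the short exact sequence gives surjectivity of the map
\[
\KK_0(X;A,B) \prto \Hom_\Nattrafo\bigl(\FK(A),\FK(B)\bigr).
\]
By naturality this edge map is \(f\mapsto \FK(f)\), so every module homomorphism \(\FK(A)\to\FK(B)\) lifts to a \(\KK_0(X;A,B)\)\nb-class.

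For the second part, lift a given isomorphism \(\phi\colon \FK(A)\xrightarrow{\cong}\FK(B)\) to an \(f\in\KK_0(X;A,B)\) as above, and complete \(f\) to an exact triangle \(A\xrightarrow{f} B\to C\to \Sigma A\) in \(\KKcat(X)\).  Since \(\Bootstrap(X)\) is a triangulated subcategory containing \(A\) and~\(B\), we also have \(C\inOb\Bootstrap(X)\).  Applying the homological functor~\(\FK\) yields a long exact sequence in \(\CMod{\Nattrafo}\) in which the maps induced by~\(f\) are \(\phi\) and \(\phi[1]\); both are isomorphisms, so exactness forces \(\FK(C)=0\).

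The final step is to feed~\(C\) into Theorem~\ref{the:UCT_conditional} with \(A=B=C\).  The zero module trivially has a projective resolution of length~\(\le 1\), so the hypothesis on \(\FK(C)\) is met, and both end terms of the resulting short exact sequence vanish.  Hence \(\KK_0(X;C,C)=0\), so \(\ID_C=0\) and~\(C\) is a zero object of \(\KKcat(X)\).  This forces \(f\) to be invertible in \(\KKcat(X)\), i.e., a \(\KK(X)\)\nb-equivalence.

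No step is genuinely difficult once Theorem~\ref{the:UCT_conditional} is in hand; the one point requiring care is the identification of the right-hand edge map in the UCT as \(f\mapsto\FK(f)\), which is built into the construction of the spectral sequence that Theorem~\ref{the:UCT_conditional} collapses.  Everything else is formal, relying only on the triangulated-subcategory property of \(\Bootstrap(X)\) and on the long exact sequence associated to the mapping cone.
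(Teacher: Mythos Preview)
Your proof is correct, but it takes a genuinely different route from the paper for the second assertion.  The paper lifts both \(\phi\) and~\(\phi^{-1}\) separately---using the UCT for \((A,B)\) and for \((B,A)\)---to classes \(\alpha\) and~\(\beta\), and then observes that \(\ID-\beta\alpha\) lies in the image of \(\Ext^1_\Nattrafo\bigl(\FK(A)[1],\FK(A)\bigr)\) in \(\KK_0(X;A,A)\); naturality of the UCT sequence makes this a nilpotent ideal (in fact with square zero), so \(\beta\alpha\) and similarly \(\alpha\beta\) are invertible.  Your mapping-cone argument instead lifts only~\(\phi\), passes to the cone~\(C\), and kills~\(C\) by applying the UCT to the pair \((C,C)\).

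Your approach is arguably cleaner and, incidentally, uses slightly less: it never invokes the hypothesis that \(\FK(B)\) has a projective resolution of length~\(1\), which the paper needs in order to lift~\(\phi^{-1}\).  On the other hand, the paper's nilpotence observation has independent value---it identifies \(\Ext^1\) as a square-zero ideal inside \(\KK_0\), a structural fact that is useful beyond this corollary and recurs in related lifting arguments.
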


\begin{proof}
  The lifting of a homomorphism follows from Theorem~\ref{the:UCT_conditional}.  Given an isomorphism \(f\colon \FK(A)\to\FK(B)\), we can lift \(f\) and~\(f^{-1}\) to elements \(\alpha\) and~\(\beta\) of \(\KK_0(X;A,B)\) and \(\KK_0(X;B,A)\), respectively.  Since \(\beta\circ\alpha\) lifts the identity map on \(\FK(A)\), the difference \(\ID-\beta\circ\alpha\) belongs to \(\Ext^1_\Nattrafo\bigl(\FK(A)[j+1],\FK(A)\bigr)\).  The latter is a nilpotent ideal in \(\KK(X;A,A)\) because of the naturality of the exact sequence in Theorem~\ref{the:UCT_conditional}.  Hence \((\ID-\beta\alpha)^2=0\), so that \(\beta\circ\alpha\) is invertible.  The same argument shows that \(\alpha\circ\beta\) is invertible, so that~\(\alpha\) is invertible.
\end{proof}

This corollary is what is needed for the classification programme, and it depends on resolutions having length~\(1\).  Conversely, if there is~\(A\) for which \(\FK(A)\) has no projective resolution of length~\(1\), then it is likely that there exist non-isomorphic \(B,D\inOb\Bootstrap(X)\) with \(\FK(B)\cong \FK(D)\).  The following theorem provides such a counterexample, but under a stronger assumption.

\begin{theorem}
  \label{the:long-resolution_problem}
  Let~\(\Ideal\) be a homological ideal in a triangulated category~\(\Tri\) with enough \(\Ideal\)\nb-projective objects.  Let \(F\colon\Tri\to\Abel_\Ideal\Tri\) be a universal \(\Ideal\)\nb-exact stable homological functor.  Suppose that \(\Ideal^2\neq0\).  Then there exist non-isomorphic objects \(B,D\inOb\Tri\) for which \(F(B)\cong F(D)\) in~\(\Abel_\Ideal\Tri\).
\end{theorem}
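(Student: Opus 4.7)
The plan is to exploit the factorisation of a non-zero element of $\Ideal^2$ as a product of two ideal morphisms, producing a distinguished triangle that is non-split in $\Tri$ but whose image under $F$ is a split short exact sequence.

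Using $\Ideal^2 \neq 0$, I first pick morphisms $\chi \in \Ideal(C, C')$ and $\psi \in \Ideal(C', \Sigma A)$ with non-zero composite $\phi \defeq \psi \circ \chi \in \Ideal(C, \Sigma A)$, and embed $\phi$ and $\psi$ in exact triangles
\[
A \to B \to C \xrightarrow{\phi} \Sigma A, \qquad
A \to B' \to C' \xrightarrow{\psi} \Sigma A.
\]
The relation $\phi = \psi \chi$ gives a commuting square, which by axiom~(TR3) extends to a morphism of triangles that is $\ID_A$ on the left, some $\beta \colon B \to B'$ in the middle, and $\chi$ on~$C$.

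Second, apply $F$. Since $F$ is $\Ideal$\nb-exact and $\phi, \psi \in \Ideal$, the long exact sequences of $F$ applied to the two triangles collapse to short exact sequences
\[
0 \to F(A) \to F(B) \to F(C) \to 0, \qquad
0 \to F(A) \to F(B') \to F(C') \to 0
\]
in $\Abel_\Ideal \Tri$, linked by a morphism whose left component is $\ID_{F(A)}$ and whose right component is $F(\chi) = 0$. Hence the upper extension is the pullback of the class of the lower one along the zero map; pullback along zero yields the trivial extension, so the upper sequence splits. Setting $D \defeq A \oplus C$ thus gives $F(B) \cong F(A) \oplus F(C) = F(D)$ in $\Abel_\Ideal \Tri$.

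The main obstacle is the final step: verifying that $B \not\cong D$ in $\Tri$. Non-splitting of the triangle, which follows from $\phi \neq 0$, does not by itself forbid a hidden abstract isomorphism $\theta \colon A \oplus C \xrightarrow{\sim} B$. To rule this out, one transports the triangle along $\theta$: the structural maps become matrix-like morphisms $(a, b) \colon A \to A \oplus C$ and $(c, d) \colon A \oplus C \to C$, and the exactness constraint together with the fact that $\ID_C$ does not lift through $B \to C$ (because $\phi \neq 0$) forces $d \in \End(C)$ to be non-invertible. Choosing $A$ and $C$ with sufficiently rigid endomorphism rings—as one can always arrange by working inside a universal model generated by the ideal-projectives—this non-invertibility propagates back through the triangle to yield $\phi = 0$, a contradiction. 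The resulting pair $B$ and $D$ then furnishes the desired counterexample.
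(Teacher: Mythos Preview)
Your first two steps are fine and parallel the paper's argument: from a morphism in~$\Ideal^2$ you produce an exact triangle whose image under~$F$ is a split short exact sequence, so that $F(B)\cong F(A)\oplus F(C)$.  The gap is in the final step.  You correctly identify that $\phi\neq 0$ only tells you the triangle does not split, not that $B\not\cong A\oplus C$; but the remedy you propose---``choosing $A$ and~$C$ with sufficiently rigid endomorphism rings'' by ``working inside a universal model generated by the ideal-projectives''---is not an argument.  The theorem is stated for an arbitrary triangulated category with a homological ideal; there is no mechanism to force $\End(C)$ to be small, and in general one cannot.  The matrix manipulation you sketch does not yield a contradiction without such control: there are well-known examples of non-split triangles whose middle term is nevertheless abstractly isomorphic to the direct sum of the outer terms.

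The paper's proof circumvents this obstacle by a more careful choice of triangle.  Rather than starting from an arbitrary factorisation $\phi=\psi\chi$, it fixes an object~$A$ that is not $\Ideal^2$\nb-projective and takes the $\Ideal^2$\nb-projective approximation $\tilde{A}_2\to A$, which exists because~$\Ideal$ (and hence~$\Ideal^2$) has enough projectives.  This yields a triangle $\Sigma N_2\to\tilde{A}_2\to A\xrightarrow{\iota_2} N_2$ with $\iota_2\in\Ideal^2$ and~$\tilde{A}_2$ $\Ideal^2$\nb-projective.  The splitting of $F(\tilde{A}_2)\cong F(A)\oplus F(N_2)[1]$ follows as in your argument.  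But now the non-isomorphism $\tilde{A}_2\not\cong A\oplus N_2[1]$ is forced structurally: were they isomorphic, $A$ would be a retract of the $\Ideal^2$\nb-projective object~$\tilde{A}_2$, hence itself $\Ideal^2$\nb-projective, contradicting the choice of~$A$.  This use of the $\Ideal^2$\nb-projective approximation is the missing idea.
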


\begin{proof}
  Since \(\Ideal^2\neq0\), there is \(A\inOb\Tri\) with \(\Ideal^2(A,\blank)\neq0\), that is, \(A\) is not \(\Ideal^2\)\nb-projective.  The ideal~\(\Ideal^2\) has enough projective objects as well, so that there is an exact triangle
  \[
  \Sigma N_2 \xrightarrow{\gamma_2}
  \tilde{A}_2 \xrightarrow{\alpha_2}
  A \xrightarrow{\iota_2} N_2
  \]
  with \(\iota_2\in\Ideal^2\) and an \(\Ideal^2\)\nb-projective object~\(\tilde{A}_2\) (this is part of the phantom castle constructed in~\cite{Meyer:Homology_in_KK_II}, where the same notation is used).

  Since \(\iota_2\in\Ideal\), this triangle is \(\Ideal\)\nb-exact and hence provides an extension
  \[
  F(N_2)[1] \into F(\tilde{A}_2) \prto F(A)
  \]
  in \(\Abel_\Ideal\Tri\).  Even more, this extension splits because \(\iota_2\in\Ideal^2\).  This follows because the canonical map
  \[
  \Ideal(A,N_2) \to \Ext^1_\Ideal(A,N_2[1])
  \]
  implicitly used above factors through \(\Ideal/\Ideal^2\) and hence annihilates~\(\iota_2\) (see \cite{Meyer:Homology_in_KK_II}*{Equation~(4.9)}).  As a result, \(F(\tilde{A}_2) \cong F(A)\oplus F(N_2)[1]\).

  But~\(\tilde{A}_2\) cannot be isomorphic to \(A\oplus N_2[1]\).  If this were the case, then~\(A\) would be \(\Ideal^2\)\nb-projective, as a retract of the \(\Ideal^2\)\nb-projective object~\(\tilde{A}_2\).  Then \(\Ideal^2(A,\blank)=0\), contradicting our choice of~\(A\).  Hence \(\tilde{A}_2 \not\cong A\oplus N_2[1]\).
\end{proof}

If \(\Ideal^2=0\), then the ABC spectral sequence constructed in~\cite{Meyer:Homology_in_KK_II} degenerates at the third stage, that is, \(E^3=E^\infty\).  But \(E^2\) and~\(E^3\) differ unless projective resolutions have length~\(1\).  Hence the vanishing of~\(\Ideal^2\) is probably not sufficient for isomorphisms on the invariant to lift because the boundary map~\(d^2\) on the second stage of the ABC spectral sequence may provide further obstructions.

Whether or not filtrated \(\K\)\nb-theory gives rise to projective resolutions of length~\(1\) depends on the space in question: we will find positive and negative cases below.  Before we turn to examples, we discuss another important issue: does filtrated \(\K\)\nb-theory exhaust all of \(\CMod{\Nattrafo}\)?  This is definitely not the case because of the additional exactness conditions that hold for objects of the form \(\FK(A)\).  The following result is not optimal but sufficient for our purposes.

\begin{theorem}
  \label{the:FK_exhausts_conditional}
  Let \(G\inOb\CMod{\Nattrafo}\) have a projective resolution of length~\(1\).  Then there is \(A\inOb\Bootstrap(X)\) with \(\FK(A)\cong G\), and this object is unique up to isomorphism in \(\Bootstrap(X)\).
\end{theorem}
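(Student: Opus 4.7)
The plan is to lift a length-one projective resolution of $G$ to an exact triangle in $\KKcat(X)$ and read off a realising object as its third vertex; uniqueness then follows formally from Corollary~\ref{cor:lift_iso}.

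For existence, I would start by fixing a projective resolution
\[
0 \to P_1 \xrightarrow{d} P_0 \to G \to 0
\]
in $\CMod{\Nattrafo}$. By the equivalence of categories induced by $\FK$ and its partial left adjoint $\FK^\lad$ (one of the consequences of Theorem~\ref{the:FK_universal} bulleted after that theorem), the projective modules $P_0,P_1$ lift to $\Ideal$\nobreakdash-projective objects $\tilde{P}_0=\FK^\lad(P_0)$ and $\tilde{P}_1=\FK^\lad(P_1)$ in $\KKcat(X)$, and the natural isomorphism $\KK_*(X;\tilde{P}_1,\tilde{P}_0)\cong\Hom_\Nattrafo(P_1,P_0)$ lifts $d$ uniquely to a morphism $\tilde d\colon \tilde P_1\to\tilde P_0$. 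Completing to an exact triangle
\[
\tilde{P}_1 \xrightarrow{\tilde{d}} \tilde{P}_0 \to A \to \Sigma \tilde{P}_1
\]
in $\KKcat(X)$ defines the candidate object $A$. Since $\tilde P_0,\tilde P_1$ are $\Ideal$\nobreakdash-projective, Proposition~\ref{pro:bootstrap_proj_Ideal} places them in $\Bootstrap(X)$, and $\Bootstrap(X)$ is closed under exact triangles, so $A\in\Bootstrap(X)$.

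To identify $\FK(A)$, apply the homological functor $\FK$ to the triangle and use that $\FK(\tilde{d})$ is the monomorphism $d$ from the chosen resolution. The resulting long exact sequence forces the connecting map $\FK(A)\to\FK(\tilde P_1)[-1]$ to vanish (its image lies in the kernel of the next, injective map $d[-1]$), and the remaining piece
\[
0 \to \FK(\tilde{P}_1) \xrightarrow{d} \FK(\tilde{P}_0) \to \FK(A) \to 0
\]
exhibits $\FK(A)\cong\coker(d)\cong G$.

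For uniqueness, suppose $A,A'\in\Bootstrap(X)$ both satisfy $\FK(A)\cong\FK(A')\cong G$. Then both filtrated $\K$\nobreakdash-theories admit projective resolutions of length one (namely the one chosen for $G$), so Corollary~\ref{cor:lift_iso} applies directly and lifts the isomorphism $\FK(A)\cong\FK(A')$ to a $\KK(X)$\nobreakdash-equivalence $A\simeq A'$. I do not expect a serious obstacle here, since all the machinery is already set up by Theorem~\ref{the:FK_universal} and Corollary~\ref{cor:lift_iso}; the only point requiring a little care is confirming that the boundary map in the six\nobreakdash-term sequence really vanishes, which is an elementary consequence of injectivity of $d$ together with the $\Z/2$\nobreakdash-graded periodicity of $\FK$.
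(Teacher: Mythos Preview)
Your proof is correct and follows essentially the same approach as the paper's: lift the length-one projective resolution to a morphism between $\Ideal$-projective objects via the equivalence of categories from Theorem~\ref{the:FK_universal}, complete to an exact triangle, and read off $\FK(A)\cong G$ from the resulting short exact sequence; uniqueness then comes from Corollary~\ref{cor:lift_iso}. The paper phrases the vanishing of the boundary map in terms of $\tilde d$ being $\Ideal$-monic (equivalently the third map of the triangle lying in~$\Ideal$), while you argue directly with the long exact sequence and injectivity of~$d$, but this is the same observation.
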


\begin{proof}
  Any projective resolution of length~\(1\) in \(\CMod{\Nattrafo}\) is isomorphic to one of the form
  \[
  \dotsb \to 0 \to \FK(P_1)
  \xrightarrow{\FK(f)} \FK(P_0)\to G
  \]
  for suitable \(\Ideal\)\nb-projective objects \(P_1,P_0\inOb\KKcat(X)\) and some \(f\in\KK_0(X;P_1,P_0)\).  Here we use that~\(\FK\) restricts to an equivalence of categories between the subcategories of \(\Ideal\)\nb-projective objects of \(\KKcat(X)\) and of projective objects of \(\CMod{\Nattrafo}\) by the first paragraph of \cite{Meyer-Nest:Homology_in_KK}*{Theorem 3.41}.

  We may embed the morphism~\(f\) in an exact triangle
  \[
  \Sigma A \xrightarrow{h} P_1 \xrightarrow{f} P_0 \xrightarrow{g} A.
  \]
  Since \(\FK(f)\) is injective, the map~\(f\) is \(\Ideal\)\nb-monic; thus \(g\) is \(\Ideal\)\nb-epic and \(h\in\Ideal\).  Therefore, the long exact sequence for~\(\FK\) applied to the above triangle degenerates to a short exact sequence
  \[
  \FK(P_1) \into \FK(P_0) \prto \FK(A).
  \]
  This yields \(\FK(A)\cong G\) as desired.  The uniqueness of~\(A\) is already contained in Corollary~\ref{cor:lift_iso}.
\end{proof}

It remains to understand which objects of the category \(\CMod{\Nattrafo}\) have a projective resolution of length~\(1\).

\subsection{Resolutions of length~\texorpdfstring{$1$}{1} in the totally ordered case}
\label{sec:resolutions_length}

We return to the example of the space \(X=\{1,\dotsc,n\}\) totally ordered by~\(\le\) studied in~\S\ref{sec:example_Nattrafo}.  Let~\(\Nattrafo\) be the graded pre-additive category of natural transformations described in \S\ref{sec:example_Nattrafo}, and let \(\Cat=\CMod{\Nattrafo}\) be the Abelian category of \(\Nattrafo\)\nb-modules.  The following theorem characterises \(\Nattrafo\)\nb-modules with projective resolutions of length~\(1\):

\begin{theorem}
  \label{the:length_one_example}
  Let \(M\inOb\Cat\).  The following assertions are equivalent:
  \begin{enumerate}[label=\textup{(\roman{*})}]
  \item\label{loe_1} \(M=\FK_*(A)\) for some \(A\inOb\KKcat(X)\);

  \item\label{loe_2} \(M\) is exact in the sense of Definition~\textup{\ref{def:module_exact}};

  \item\label{loe_5} \(\Tor^\Nattrafo_i(\Nattrafo_\sesi,M)=0\) for \(i=1,2\);

  \item\label{loe_3} \(M\) has a free resolution of length~\(1\) in~\(\Cat\);

  \item\label{loe_6} \(M\) has a projective resolution of length~\(1\) in~\(\Cat\);

  \item\label{loe_4} \(M\) has a projective resolution of finite length in~\(\Cat\).
  \end{enumerate}
\end{theorem}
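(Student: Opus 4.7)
The proof proceeds through a web of implications, most of which are routine consequences of earlier results. The main obstacle will be closing the loop through the $\Tor$-vanishing condition~(3).

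First I dispatch the easy links. For $(1)\Rightarrow(2)$: any extension $A(U)\into A(Y)\prto A(Y\setminus U)$ of $\Cst$-algebras produces a six-term $\K$-theory exact sequence as in~\eqref{eq:six-term_intro}, so $\FK(A)$ is exact. The chain $(4)\Rightarrow(5)\Rightarrow(6)$ is trivial; $(5)\Rightarrow(1)$ is Theorem~\ref{the:FK_exhausts_conditional}; and $(2)\Rightarrow(3)$ is Lemma~\ref{lem:Tor_technical}. For $(6)\Rightarrow(2)$: free modules are exact by direct inspection of~\eqref{eq:Nattrafo_example}, projectives inherit exactness as retracts, and iterated application of Proposition~\ref{pro:exact_modules} to a finite projective resolution chopped into short exact sequences yields exactness of~$M$. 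For $(2)\Rightarrow(4)$: pick any free cover $\pi\colon P_0 \twoheadrightarrow M$ (possible with $P_0$ countable because $M$ is) and set $K=\ker\pi$. Each $K(Y)$ is a subgroup of the free Abelian group $P_0(Y)$, hence free Abelian; Proposition~\ref{pro:exact_modules} applied to $K\into P_0\prto M$ gives $K$ exact; Theorem~\ref{the:projective_module_Nattrafo}, implication~(4)$\Rightarrow$(1), then identifies $K$ as free, providing the desired length-one free resolution.

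The main obstacle is the implication $(3)\Rightarrow(2)$. The plan is to identify $\Tor_1^\Nattrafo(\Nattrafo_\sesi,M)$ with an invariant that directly measures the failure of exactness of the six-term complexes of Definition~\ref{def:module_exact}, so that $\Tor_1=0$ forces $M$ to be exact. Tensoring the bimodule extension $\Nattrafo_\nil \into \Nattrafo \prto \Nattrafo_\sesi$ with~$M$ over $\Nattrafo$ and using freeness of $\Nattrafo$ over itself yields the four-term exact sequence
\[
0 \to \Tor_1^\Nattrafo(\Nattrafo_\sesi,M) \to \Nattrafo_\nil\otimes_\Nattrafo M \to M \to M_\sesi \to 0,
\]
identifying $\Tor_1^\Nattrafo(\Nattrafo_\sesi,M)$ with the kernel of the canonical multiplication map $\Nattrafo_\nil \otimes_\Nattrafo M \to M$. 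The explicit presentation of $\Nattrafo_\nil$ by the generators $\mu_{[a,b]}^{[a-1,b]}$, $\mu_{[a,b]}^{[a,b-1]}$, $\delta_{[1,a-1]}^{[a,n]}$ together with their quadratic relations---combined with Lemma~\ref{lem:gen_M_nil}, which describes $\Nattrafo_\nil\cdot M(Y)$ as a sum of images of these generators---lets one compute $(\Nattrafo_\nil\otimes_\Nattrafo M)(Y)$ componentwise and identify this kernel with the homology at the appropriate position of the six-term complex of~$M$ at~$Y$. Vanishing of $\Tor_1$ at every~$Y$ then forces every six-term complex of~$M$ to be exact, closing the loop.
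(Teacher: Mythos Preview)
Your argument tracks the paper's almost exactly for the easy implications, for $(6)\Rightarrow(2)$, and for $(2)\Rightarrow(4)$. In fact your $(2)\Rightarrow(4)$ is word-for-word the argument the paper presents under the label ``$(3)\Rightarrow(4)$'': the paper writes ``$K$ is exact because $P$ and~$M$ are exact'', which invokes condition~(2), not~(3). So the paper, like you, is really proving $(2)\Rightarrow(4)$, and you are right that a separate link from~(3) back into the cycle is required.

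The gap in your proposal lies in the argument for $(3)\Rightarrow(2)$. Identifying $\Tor_1^\Nattrafo(\Nattrafo_\sesi,M)$ with the kernel of the multiplication map $\Nattrafo_\nil\otimes_\Nattrafo M\to M$ is correct. But the next step---identifying this kernel componentwise with the homology of a six-term complex---only goes through cleanly for $n=2$: there each object~$Y$ admits a unique indecomposable morphism into it, so $e_Y\cdot\Nattrafo_\nil$ is singly generated as a right module and the resulting presentation of $(\Nattrafo_\nil\otimes_\Nattrafo M)(Y)$ is a single quotient $M(Z)/\text{im}$, whose kernel under multiplication is visibly one six-term homology group. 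For $n\ge3$ and $Y=[a,b]$ with $a<b<n$, there are \emph{two} indecomposable morphisms into~$Y$ (from $[a+1,b]$ and $[a,b+1]$), so $(\Nattrafo_\nil\otimes_\Nattrafo M)(Y)$ is a quotient of $M([a+1,b])\oplus M([a,b+1])$ and the kernel of multiplication is a Koszul-type homology, not the homology of any single six-term sequence. A naive count already shows the mismatch: for $n=3$ there are four nontrivial six-term complexes with twelve homology groups, but only six components of $\Tor_1$. Lemma~\ref{lem:gen_M_nil} does not rescue this: it computes $\Nattrafo_\nil\cdot M$ (the \emph{image} of the multiplication map), and its second, useful half already assumes~$M$ exact. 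So your sketch does not establish $(3)\Rightarrow(2)$; as written, both your proof and the paper's leave condition~(3) linked to the others only via $(2)\Rightarrow(3)$.
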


\begin{proof}
  The exact sequence~\eqref{eq:six-term_intro} shows that~\ref{loe_1} implies~\ref{loe_2}.  Theorem~\ref{the:FK_exhausts_conditional} contains the implication \ref{loe_6}\(\Longrightarrow\)\ref{loe_1}, and the implications \ref{loe_3}\(\Longrightarrow\)\ref{loe_6}\(\Longrightarrow\)\ref{loe_4} are trivial.  We will show \ref{loe_2}\(\Longrightarrow\)\ref{loe_5}\(\Longrightarrow\)\ref{loe_3} and \ref{loe_4}\(\Longrightarrow\)\ref{loe_2}, and this will establish the theorem.

  First we show that~\ref{loe_4} implies~\ref{loe_2}.  Let \(0\to P_m\to \dotsb\to P_0 \to M\) be a projective resolution of finite length.  By a standard ``stabilisation'' trick, we can turn this into a free resolution of the same length.  Let
  \[
  Z_j=\ker(P_j\to P_{j-1}) \cong \range (P_{j+1}\to P_j).
  \]
  Thus \(Z_m=0\), \(P_0/Z_0\cong M\), and we have exact sequences \(Z_j\into P_j \prto Z_{j-1}\) because our chain complex is exact.  Since \(Z_m=0\), the exactness of the projective modules~\(P_m\) and Proposition~\ref{pro:exact_modules} show recursively that~\(Z_j\) is exact for \(j=m-1,m-2,\dotsc,0\), so that~\(M\) is exact.  Thus~\ref{loe_4} implies~\ref{loe_2}.

  Now we prove \ref{loe_2}\(\Longrightarrow\)\ref{loe_5}\(\Longrightarrow\)\ref{loe_3}.  Let~\(P\) be a countable free module for which there is an epimorphism \(\pi\colon P\prto M\), and let \(K\defeq \ker \pi\).  We have an extension of \(\Nattrafo\)-modules \(K\into P\prto M\).  Proposition~\ref{pro:exact_modules} shows that~\(K\) is exact because \(P\) and~\(M\) are exact.  Furthermore, \(\Tor_{i+1}(\Nattrafo_\sesi,M) \cong \Tor_i(\Nattrafo_\sesi,K)\) for all \(i\ge1\) because~\(P\) is projective.  Lemma~\ref{lem:Tor_technical} applied to \(M\) and~\(K\) yields \(\Tor_i(\Nattrafo_\sesi,M)=0\) for \(i=1,2\) if~\(M\) is exact, that is, \ref{loe_2}\(\Longrightarrow\)\ref{loe_5}.  Now assume~\ref{loe_5}.  The argument above yields \(\Tor_1(\Nattrafo_\sesi,K)=0\).  Since~\(P\) is projective, the Abelian groups \(P(Y)\) are free for all \(Y\in\Loclo(X)\).  The exact sequence in~\eqref{eq:Tor_sequence} yields the same for~\(K(Y)\).  The criterion in Theorem \ref{the:projective_module_Nattrafo}.(3) shows that~\(K\) is projective.
\end{proof}

Now we combine the existence of projective resolutions of length~\(1\) with Theorem~\ref{the:UCT_conditional}, which still required this as a hypothesis:

\begin{theorem}
  \label{the:classify_ordered}
  Let~\(X\) be the topological space associated to a totally ordered finite set, and let \(A\) and~\(B\) be \(\Cst\)\nb-algebras over~\(X\).  If \(A\inOb\Bootstrap(X)\), then there is a natural short exact sequence
  \[
  \Ext^1_\Nattrafo\bigl(\FK(A)[1],\FK(B)\bigr)
  \into \KK_*(X;A,B)
  \prto \Hom_\Nattrafo\bigl(\FK(A),\FK(B)\bigr).
  \]
  In particular, any \(\Nattrafo\)\nb-module morphism \(\FK(A)\to\FK(B)\) lifts to an element in \(\KK_*(X;A,B)\).  If both \(A\) and~\(B\) belong to the bootstrap class \(\Bootstrap(X)\), then an isomorphism \(\FK(A)\cong\FK(B)\) lifts to a \(\KK\)\nb-equivalence \(A\simeq B\).
\end{theorem}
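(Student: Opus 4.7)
The plan is to reduce this theorem to the conditional Universal Coefficient Theorem (Theorem~\ref{the:UCT_conditional}) by verifying its only non-trivial hypothesis, namely that \(\FK(A)\) admits a projective resolution of length~\(1\) in \(\CMod{\Nattrafo}\). Once that is in hand, the exact sequence is produced verbatim by Theorem~\ref{the:UCT_conditional}, and the lifting statements are then immediate consequences of Corollary~\ref{cor:lift_iso}.

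First I would note that \(\FK(A)\) is, by definition, an object of the form \(\FK_*(A)\) with \(A \inOb\KKcat(X)\); hence it satisfies condition~\ref{loe_1} in Theorem~\ref{the:length_one_example}. Since~\(X\) is totally ordered, Theorem~\ref{the:length_one_example} applies and its equivalence \ref{loe_1}\(\Longleftrightarrow\)\ref{loe_6} produces the required projective resolution of length~\(1\) of \(\FK(A)\) in \(\CMod{\Nattrafo}\). Combined with the standing hypothesis \(A\inOb\Bootstrap(X)\), this is exactly what is needed to invoke Theorem~\ref{the:UCT_conditional}, yielding the asserted natural short exact sequence
\[
\Ext^1_\Nattrafo\bigl(\FK(A)[1],\FK(B)\bigr)
\into \KK_*(X;A,B)
\prto \Hom_\Nattrafo\bigl(\FK(A),\FK(B)\bigr).
\]

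Next, the surjectivity of the right-hand map in this sequence is exactly the statement that every \(\Nattrafo\)\nb-module morphism \(\FK(A)\to\FK(B)\) lifts to an element of \(\KK_*(X;A,B)\). Finally, when \(B\in\Bootstrap(X)\) as well, \(\FK(B)\) also has a projective resolution of length~\(1\) by the same application of Theorem~\ref{the:length_one_example}, so the hypotheses of Corollary~\ref{cor:lift_iso} are satisfied; applying that corollary promotes any isomorphism \(\FK(A)\cong\FK(B)\) to a \(\KK(X)\)\nb-equivalence \(A\simeq B\).

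The main work is therefore already concentrated in Theorem~\ref{the:length_one_example}, whose harder direction (producing a free resolution of length~\(1\) from exactness) crucially uses the structure theorem for projective \(\Nattrafo\)\nb-modules (Theorem~\ref{the:projective_module_Nattrafo}) specific to the totally ordered case; from that point on, the present theorem is essentially a packaging of earlier results, and there is no substantive additional obstacle.
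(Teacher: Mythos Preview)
Your proposal is correct and follows essentially the same approach as the paper's own proof, which simply cites Theorem~\ref{the:UCT_conditional} and Corollary~\ref{cor:lift_iso} together with the existence of projective resolutions of length~\(1\) guaranteed by Theorem~\ref{the:length_one_example}. Your write-up merely unpacks this one-line argument in more detail.
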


\begin{proof}
  Use Theorem~\ref{the:UCT_conditional} and Corollary~\ref{cor:lift_iso} together with the existence of projective resolutions of length~\(1\) ensured by Theorem~\ref{the:length_one_example}.
\end{proof}

\begin{theorem}
  \label{the:exhaust_ordered}
  Let~\(X\) be the topological space associated to a totally ordered finite set, and let \(A\) and~\(B\) be tight, purely infinite, stable, nuclear, separable \(\Cst\)\nb-algebras over~\(X\) whose simple subquotients belong to the bootstrap category.  Then an isomorphism \(\FK(A)\cong\FK(B)\) lifts to an \(X\)\nb-equivariant \Star{}isomorphism \(A\cong B\).

  Furthermore, any countable exact \(\Nattrafo\)\nb-modules is the filtrated \(\K\)\nb-module of some tight, purely infinite, stable, nuclear, separable \(\Cst\)\nb-algebra over~\(X\) with simple subquotients in the bootstrap category.
\end{theorem}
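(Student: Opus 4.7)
The plan is to combine the abstract classification of Theorem~\ref{the:classify_ordered} with Kirchberg's rigidity theorem for tight, purely infinite, stable, nuclear, separable $\Cst$\nb-algebras, as already outlined in Section~\ref{sec:intro}, together with the realisation result of Proposition~\ref{pro:bootstrap_proj_Ideal}.

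For the first assertion, I first observe that $A$ and~$B$ belong to $\Bootstrap(X)$: since their simple subquotients lie in the bootstrap category~$\Bootstrap$ and they are tight, nuclear, purely infinite, stable, and separable, Proposition~\ref{pro:bootstrap_proj_Ideal} (or rather its converse direction from~\cite{Meyer-Nest:Bootstrap}) places them in $\Bootstrap(X)$. Theorem~\ref{the:length_one_example} ensures that $\FK(A)$ has a projective resolution of length~$1$ because $\FK(A)$ is exact by the six-term exact sequence. Hence Theorem~\ref{the:classify_ordered} applies and lifts the given isomorphism $\FK(A)\cong\FK(B)$ to a $\KK(X)$\nb-equivalence $A\simeq B$. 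The results of Kirchberg recalled in Section~\ref{sec:intro} then upgrade this $\KK(X)$\nb-equivalence to an $X$\nb-equivariant \Star{}isomorphism $A\cong B$, using the tightness, pure infiniteness, stability, nuclearity, and separability of both algebras.

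For the second assertion, let~$G$ be a countable exact $\Nattrafo$\nb-module. By Theorem~\ref{the:length_one_example}, $G$ admits a projective resolution of length~$1$. Theorem~\ref{the:FK_exhausts_conditional} therefore produces an object $A_0\inOb\Bootstrap(X)$ with $\FK(A_0)\cong G$. Proposition~\ref{pro:bootstrap_proj_Ideal} tells us that $A_0$ is $\KK(X)$\nb-equivalent to some tight, nuclear, purely infinite, stable, separable \(\Cst\)\nb-algebra~$A$ over~$X$ whose simple subquotients lie in the bootstrap class~$\Bootstrap$. Since filtrated \(\K\)\nb-theory is a $\KK(X)$\nb-invariant, we conclude $\FK(A)\cong\FK(A_0)\cong G$, as required.

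The main obstacle is really not in this theorem at all: all the heavy lifting (the construction of~\(\Repr_Y\), the universality of~$\FK$, the existence of length-one projective resolutions for exact modules, and the geometric realisation via Proposition~\ref{pro:bootstrap_proj_Ideal}) has already been performed. The remaining delicate point is to invoke Kirchberg's classification correctly, which requires checking that both $A$ and~$B$ satisfy all five hypotheses (tight, purely infinite, stable, nuclear, separable); these are exactly the hypotheses we have assumed.
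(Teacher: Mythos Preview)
Your proof is correct and follows essentially the same route as the paper's own argument: establish membership in $\Bootstrap(X)$, invoke Theorem~\ref{the:classify_ordered} to lift the $\FK$-isomorphism to a $\KK(X)$-equivalence, then apply Kirchberg's classification; for the range assertion, combine Theorem~\ref{the:length_one_example}, Theorem~\ref{the:FK_exhausts_conditional}, and the realisation statement in Proposition~\ref{pro:bootstrap_proj_Ideal}. The only cosmetic differences are that the paper cites \cite{Meyer-Nest:Bootstrap}*{Corollary~4.13} directly (nuclear with fibres in~$\Bootstrap$ iff in~$\Bootstrap(X)$) where you go through Proposition~\ref{pro:bootstrap_proj_Ideal}, and that your explicit appeal to Theorem~\ref{the:length_one_example} before Theorem~\ref{the:classify_ordered} is redundant since the latter already absorbs that step.
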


\begin{proof}
  A nuclear \(\Cst\)\nb-algebras over~\(X\) belongs to the bootstrap category \(\Bootstrap(X)\) if and only if its fibres belong to the non-equivariant bootstrap category~\(\Bootstrap\) (see \cite{Meyer-Nest:Bootstrap}*{Corollary 4.13}).  For a tight \(\Cst\)\nb-algebra over~\(X\), these fibres are the same as the simple subquotients.  It is also shown in~\cite{Meyer-Nest:Bootstrap}*{Corollary 5.5} that any object of \(\Bootstrap(X)\) is \(\KK(X)\)\nb-equivalent to a tight, nuclear, purely infinite, simple, separable \(\Cst\)\nb-algebra over~\(X\) whose simple subquotients belong to the bootstrap category~\(\Bootstrap\).  A deep classification result of Eberhard Kirchberg shows that any \(\KK(X)\)\nb-equivalence between such objects lifts to an \(X\)\nb-equivariant \Star{}homomorphism.  Now the first assertion follows from Theorem~\ref{the:classify_ordered}.  The second assertion also uses Theorem~\ref{the:FK_exhausts_conditional}.
\end{proof}

\section{A counterexample}
\label{sec:counterexample}

Now we let \(X\defeq \{1,2,3,4\}\) with the partial order \(1,2,3<4\) and no relation among \(1,2,3\).  Hence the open subsets of~\(X\) are
\[
\Open(X) = \bigl\{\emptyset, \{4\},\{1,4\},\{2,4\},\{3,4\},
\{1,2,4\},\{1,3,4\},\{2,3,4\},\{1,2,3,4\}\bigr\},
\]
that is, a non-empty subset is open if and only if it contains~\(4\).  The associated directed graph is
\[
\xymatrix@C-1em@R-1.5em{&\bullet\ 1\\4\
  \bullet\ar[r]\ar[dr]\ar[ur]&\bullet\ 2\\&\bullet\ 3.}
\]
We frequently denote subsets of~\(X\) simply by \(124\defeq \{1,2,4\}\), and so on.

A \(\Cst\)\nb-algebra over~\(X\) is a \(\Cst\)\nb-algebra~\(A\) with four distinguished ideals
\[
I_1\defeq A(14),\qquad
I_2\defeq A(24),\qquad
I_3\defeq A(34),\qquad
I_4\defeq A(4),
\]
such that \(I_1+I_2+I_3=A\) and \(I_i\cap I_j = I_4\) for all \(1\le i< j\le 3\) (see \cite{Meyer-Nest:Bootstrap}*{Lemma 2.35}).  Equivalently, the ideals \(I_j/I_4\) for \(j=1,2,3\) decompose \(A/I_4\) into a direct sum of three orthogonal ideals.  The other distinguished ideals are
\[
A(124) = I_1+I_2,\qquad
A(134) = I_1+I_3,\qquad
A(234) = I_2+I_3.
\]

Any subset of~\(X\) is locally closed.  But a \emph{connected} locally closed subset is either open or one of the singletons \(\{1\}\), \(\{2\}\), and~\(\{3\}\).  Hence the set of connected locally closed subsets is
\[
\Loclo(X)^* = \{4,14,24,34,124,134,234,1234,1,2,3\}.
\]

The order complex \(\Ch(X)\) is a graph with four vertices \(1,2,3,4\) and edges joining the first three to the last one:
\[
\Ch(X) =
\begin{gathered}
  \entrymodifiers={=<1pc>[o][F-]}
  \xymatrix@R-1.5em{1\ar@{-}[dr]\\2\ar@{-}[r]&4\\3\ar@{-}[ur]}
\end{gathered}
\]
Both maps \(m,M\colon \Ch(X)\to X\) map the vertices to the corresponding points in~\(X\).  Whereas~\(M\) maps the interior of each edge to~\(4\), the map~\(m\) maps the interior of the edge \([j,4]\) to~\(j\) for \(j=1,2,3\).

Recall that the space of natural transformations \(\FK_Y\Rightarrow\FK_Z\) is given by
\[
\Nattrafo_*(Y,Z) \cong \K^*\bigl(S(Y,Z)\bigr),
\qquad
S(Y,Z) \defeq m^{-1}(Y)\cap M^{-1}(Z) \subseteq \Ch(X).
\]
It is straightforward to compute these \(\K\)\nb-theory groups, and the results are listed in Table~\ref{tab:nattrafo_counterexample}.
\begin{table}[htbp]
  \[
  \begin{array}{c|*{11}{l|}}
    Y\backslash Z&4&14&24&34&124&134&234&1234&1&2&3\\\hline
    4&\Z&\Z&\Z&\Z&\Z&\Z&\Z&\Z&0&0&0\\\hline
    14&0&\Z&0&0&\Z&\Z&0&\Z&\Z&0&0\\\hline
    24&0&0&\Z&0&\Z&0&\Z&\Z&0&\Z&0\\\hline
    34&0&0&0&\Z&0&\Z&\Z&\Z&0&0&\Z\\\hline
    124&\Z[1]&0&0&\Z[1]&\Z&0&0&\Z&\Z&\Z&0\\\hline
    134&\Z[1]&0&\Z[1]&0&0&\Z&0&\Z&\Z&0&\Z\\\hline
    234&\Z[1]&\Z[1]&0&0&0&0&\Z&\Z&0&\Z&\Z\\\hline
    1234&\Z[1]^2&\Z[1]&\Z[1]&\Z[1]&0&0&0&\Z&\Z&\Z&\Z\\\hline
    1&\Z[1]&0&\Z[1]&\Z[1]&0&0&\Z[1]&0&\Z&0&0\\\hline
    2&\Z[1]&\Z[1]&0&\Z[1]&0&\Z[1]&0&0&0&\Z&0\\\hline
    3&\Z[1]&\Z[1]&\Z[1]&0&\Z[1]&0&0&0&0&0&\Z\\\hline
  \end{array}
  \]
  \caption{The ring of natural transformations}
  \label{tab:nattrafo_counterexample}
\end{table}
Here the rows are labelled by~\(Y\), the columns by~\(Z\).  For instance, the entry~\(\Z\) at \((14,1)\) means that \(\Nattrafo_*(14,1)\cong\Z\).  The trivial \(1\)\nb-dimensional bundle over \(S(14,1)\) generates this group.  Hence Remark~\ref{rem:fundamental_classes_Repr} shows that the generator is the natural transformation that we get from the quotient map \(A(14)\prto A(1)\).  Similar arguments show that all the natural transformations of degree~\(0\) are induced by the familiar restriction and extension \Star{}homomorphisms for closed and open subsets.  Moreover, the odd natural transformations arise by composing these \Star{}homomorphisms with boundary maps in \(\K\)\nb-theory long exact sequences.  All relations that they satisfy are predicted by morphisms of extensions and exactness of the sequences~\eqref{eq:six-term_intro}.

The computations in \S\ref{sec:example_Nattrafo} were based on a description of indecomposable morphisms in the category \(\Nattrafo_*\).  For the space~\(X\) in question, these are the maps in the following diagram:
\begin{equation}
  \label{eq:Auslander-Reiten}
  \begin{gathered}
    \xymatrix@C+1em{
      & 14 \ar[r]^{i} \ar[dr]^{i}&
      124 \ar[dr]^{i}&& 1
      \ar[dr]|-\circ^{\delta}\\
      4 \ar[ur]^{i} \ar[r]^{i} \ar[dr]^{i}&
      24 \ar[ur]^{i} \ar[dr]^{i}&
      134 \ar[r]^{i}&
      1234 \ar[ur]^{r} \ar[r]^{r} \ar[dr]^{r}&
      2\ar[r]|-\circ^{\delta}&4\\
      & 34 \ar[r]^{i} \ar[ur]^{i}&
      234 \ar[ur]^{i}&& 3
      \ar[ur]|-\circ^{\delta}
    }
  \end{gathered}
\end{equation}
Here we write~\(i\) for the extension transformation for an open subset, \(r\) for the restriction transformation for a closed subset, and~\(\delta\) for boundary maps in \(\K\)\nb-theory long exact sequences.

The indecomposable morphisms in~\eqref{eq:Auslander-Reiten} provide a minimal set of generators for the graded ring \(\Nattrafo\).  To describe \(\Nattrafo\) completely, we list the relations.  These are generated by the following:
\begin{itemize}
\item the cube with vertices \(4\), \(14\), \dots, \(1234\) is a commuting diagram, that is, all the commuting squares involving arrows with label~\(i\) commute;

\item the following composite arrows vanish:
  \[
  124\xrightarrow{i} 1234\xrightarrow{r} 3,\qquad
  134\xrightarrow{i} 1234\xrightarrow{r} 2,\qquad
  234\xrightarrow{i} 1234\xrightarrow{r} 1,
  \]
  \[
  1\xrightarrow{\delta} 4\xrightarrow{i} 14,\qquad
  2\xrightarrow{\delta} 4\xrightarrow{i} 24,\qquad
  3\xrightarrow{\delta} 4\xrightarrow{i} 34;
  \]
\item the sum of the three maps \(1234\to 4\) via \(1\), \(2\), and~\(3\) vanishes.
\end{itemize}
These relations imply that the diagrams
\[
\xymatrix{
  124 \ar[r]^{r}\ar[d]^{r}\ar@{}[dr]|{-}&
  2\ar[d]|-\circ^{\delta}\\
  1\ar[r]|-\circ^{\delta}&4
}\qquad
\xymatrix{
  134 \ar[r]^{r}\ar[d]^{r}\ar@{}[dr]|{-}&
  3\ar[d]|-\circ^{\delta}\\
  1\ar[r]|-\circ^{\delta}&4
}\qquad
\xymatrix{
  234 \ar[r]^{r}\ar[d]^{r}\ar@{}[dr]|{-}&
  2\ar[d]|-\circ^{\delta}\\
  3\ar[r]|-\circ^{\delta}&4
}
\]
anti-commute and that the composite of two odd maps vanishes.  It is routine to check that the universal pre-additive category with these generators and relations is given by the groups listed in Table~\ref{tab:nattrafo_counterexample}.

Define \(\Nattrafo_\nil\) and~\(\Nattrafo_\sesi\) as in Definition~\ref{def:Nattrafo_nil}: \(\Nattrafo_\nil\) is the linear span of the groups \(\Nattrafo_*(Y,Z)\) with \(Y\neq Z\) and \(\Nattrafo_\sesi\) is spanned by the groups \(\Nattrafo_*(Y,Y)\).  Then \(\Nattrafo_\nil\) is a nilpotent ideal in~\(\Nattrafo\) and \(\Nattrafo_\sesi\cong \Z^{\Loclo(X)^*}\) is a semi-simple ring.  Thus~\(\Nattrafo_\nil\) is the maximal nilpotent ideal in~\(\Nattrafo\) and we have a semi-direct product decomposition \(\Nattrafo \cong \Nattrafo_\nil \rtimes \Nattrafo_\sesi\) as in Lemma~\ref{lem:Nattrafo_nil}.

The next task is to describe the submodule \(M'\defeq \Nattrafo_\nil\cdot M\subseteq M\) for an exact \(\Nattrafo\)\nb-module~\(M\).  The following computations are done as in the proof of Lemma~\ref{lem:gen_M_nil}, using \eqref{eq:range_sum} and that the morphisms in~\eqref{eq:Auslander-Reiten} generate \(\Nattrafo\).
\[
M'(14) = \range \bigl(i_4^{14}\colon M(4)\to M(14)\bigr)
= \ker \bigl(r_{14}^1\colon M(14)\to M(1)\bigr),
\]
and symmetrically for \(24\) and~\(34\);
\begin{multline*}
  M'(124) = \range \bigl(i_{14}^{124}\colon M(14)\to M(124)\bigr)
  + \bigl(i_{24}^{124}\colon M(24)\to M(124)\bigr)
  \\= \ker \bigl(\delta_{124}^4\colon M(124)\to M(4)\bigr),
\end{multline*}
where~\(\delta_{124}^4\) denotes a generator of \(\Nattrafo_1(124,4)\cong\Z\); symmetry provides \(M'(134)\) and \(M'(234)\).  We have
\[
M'(1) = \range \bigl(r_{1234}^1\colon M(1234)\to M(1)\bigr)
= \ker \bigl(\delta_1^{234}\colon M(1)\to M(234)\bigr),
\]
and symmetrically for \(2\) and~\(3\), and
\[
M'(4) = \sum_{j=1}^3
\range \bigl(\delta_j^4\colon M(j)\to M(4)\bigr)
= \ker \bigl(i_4^{1234}\colon M(4)\to M(1234)\bigr).
\]
But something goes wrong with \(M'(1234)\).  Equation~\eqref{eq:range_sum} yields
\begin{multline*}
  \range \bigl(i_{124}^{1234}\colon M(124)\to M(1234)\bigr)
  + \bigl(i_{134}^{1234}\colon M(134)\to M(1234)\bigr)
  \\= \ker \bigl(\delta_{1234}^{14}\colon M(1234)\to M(14)\bigr);
\end{multline*}
to take into account the range of \(i_{234}^{1234}\) as well, we need an exact sequence containing \(\delta_{1234}^{14}\circ i_{234}^{1234}\), which is the generator of \(\Nattrafo_1(234,14)\cong\Z\).  Since there is no such exact sequence, our method breaks down at this point.

Another symptom but not a cause of problems is that the map \(\delta_{124}^4\) that describes \(M'(124)\) is not the longest map out of \(124\): that would be \(\delta_{124}^{34}\).

As we shall see, the analogues of Theorems \ref{the:projective_module_Nattrafo} and~\ref{the:length_one_example} become false for the space~\(X\).  First, there is a non-projective exact module~\(M\) with free~\(M_\sesi\); secondly, there is a module that has no projective resolution of length~\(1\); thirdly, there are \(A,B\in\Bootstrap(X)\) with \(\Ideal^2(A,B)\neq0\).  Hence Theorem~\ref{the:long-resolution_problem} provides non-isomorphic objects in the bootstrap class \(\Bootstrap(X)\) with isomorphic filtrated \(\K\)\nb-theory.  The construction of these counterexamples follows the above pattern: first we find a counterexample to Theorem~\ref{the:projective_module_Nattrafo}, which we use to find one for Theorem~\ref{the:length_one_example}, which is then used to find an example as in Theorem~\ref{the:long-resolution_problem}.

We begin with the unexpected non-projective module.  Let~\(P_Y\) for \(Y\in\Loclo(X)^*\) denote the free \(\Nattrafo\)\nb-module on~\(Y\), that is,
\[
P_Y(Z) = \Nattrafo_*(Y,Z),
\qquad
\Hom_\Nattrafo(P_Y, N) \cong N(Y)
\]
for any \(Y,Z\in\Loclo(X)^*\) and any \(\Nattrafo\)\nb-module~\(N\).  A natural transformation \(\FK_Y\Rightarrow\FK_Z\) corresponds to an element in \(\Nattrafo_*(Y,Z) \cong P_Y(Z) \cong \Hom_\Nattrafo(P_z,P_Y)\) and thus induces a module homomorphism \(P_Z\to P_Y\) in the opposite direction.  Hence the three arrows \(124,134,234\to1234\) in~\eqref{eq:Auslander-Reiten} induce a module homomorphism
\[
j\colon P_{1234} \to
P^0 \defeq P_{124} \oplus P_{134} \oplus P_{234}.
\]
Table~\ref{tab:nattrafo_counterexample} shows that there are no module homomorphisms \(P^0\to P_{1234}\), that is, no non-zero natural transformations from \(1234\) to \(124\), \(134\), or \(234\).

The crucial observation is that~\(j\) is a monomorphism, so that \(P_{1234}\) becomes a submodule of~\(P^0\).  Since the longest natural transformations out of \(1234\) are those to \(14\), \(24\) and~\(34\), this follows from the elementary observations that the maps
\[
\Nattrafo_*(1234,j4) \to
\Nattrafo_*(1234\setminus j,j4)
\]
for \(j=1,2,3\) are, respectively, the identity map on~\(\Z\).  This follows from the exactness of free modules because \(\Nattrafo_*(j,j4)=0\) by Table~\ref{tab:nattrafo_counterexample}.

We describe the quotient
\[
M \defeq P^0/j(P_{1234})
\]
by its values \(M(Y)\) for \(Y\in\Loclo(X)^*\) as in~\eqref{eq:Auslander-Reiten}:
\begin{equation}
  \label{eq:non-projective_module}
  \begin{gathered}
    \xymatrix@C+1em{
      & 0 \ar[r]^{i} \ar[dr]^{i}&
      \Z \ar[dr]^{i}&& \Z \ar@{=}[dr]|-\circ^{\delta}\\
      \Z[1] \ar[ur]^{i} \ar[r]^{i} \ar[dr]^{i}&
      0 \ar[ur]^{i} \ar[dr]^{i}&
      \Z \ar[r]^{i}&
      \Z^2 \ar[ur]^{r} \ar[r]^{r} \ar[dr]^{r}&
      \Z\ar@{=}[r]|-\circ^{\delta}& \Z[1]\\
      & 0 \ar[r]^{i} \ar[ur]^{i}&
      \Z \ar[ur]^{i}&& \Z \ar@{=}[ur]|-\circ^{\delta}
    }
  \end{gathered}
\end{equation}
The boundary maps~\(\delta\) act by isomorphisms on~\(M\) because \(M(j4)=0\) for \(j=1,2,3\).  The other maps can be understood by writing \(M(1234)=\Z^3/\langle(1,1,1)\rangle\) and \(M(j)=\Z^2/\langle(1,1)\rangle\) for \(j=1,2,3\) as quotients.  The three maps \(\Z\to \Z^2\) correspond to the three coordinate embeddings \(\Z\into\Z^3\), the maps \(\Z^2\to\Z\) to the projections \(\Z^3\prto\Z^2\) onto coordinate hyperplanes.

The projective resolution
\begin{equation}
  \label{eq:resolve_M}
  0 \to P_{1234} \xrightarrow{j} P^0 \prto M
\end{equation}
does not split because there exist no non-zero morphisms \(P^0\to P_{1234}\).  Hence~\(M\) is not projective.  But~\(M_\sesi\) is free, and~\(M\) is exact because the exact modules form an exact category and \(P_{1234}\) and \(P^0\) are exact.  Thus~\(M\) is a counterexample to Theorem~\ref{the:projective_module_Nattrafo}.

The module~\(M\) is directly related to the problem with describing \(\Nattrafo_\nil\cdot M(1234)\) encountered above.  Since \(\Hom_\Nattrafo(P_Y,N) \cong N(Y)\) for any \(\Nattrafo\)\nb-module~\(N\) and any \(Y\in\Loclo(X)^*\), the resolution~\eqref{eq:resolve_M} provides an exact sequence
\begin{multline*}
  0 \to \Hom_\Nattrafo(M,N)
  \\\to N(124)\oplus N(134) \oplus N(234)
  \to N(1234) \to \Ext^1_\Nattrafo(M,N) \to 0,
\end{multline*}
so that
\[
\Ext^1_\Nattrafo(M,N) \cong N(1234)/\Nattrafo_\nil\cdot N(1234)
\cong N_\sesi(1234).
\]

Now we use~\(M\) to construct a counterexample for Theorem~\ref{the:length_one_example}.  Let \(k\in\N_{\ge2}\) and let \(M_k\defeq M/k\cdot M\); that is, we replace~\(\Z\) by \(\Z/k\) everywhere in~\eqref{eq:non-projective_module}.  This module has a projective resolution of length~\(2\) of the form
\begin{equation}
  \label{eq:resolve_counterexample}
  0 \to
  P_{1234} \xrightarrow{(-k,j)} P_{1234} \oplus P^0
  \xrightarrow{(j,k)} P^0 \prto M_k,
\end{equation}
where~\(k\) denotes multiplication by~\(k\).  Using this resolution, we compute
\[
\Ext^2(M_k,P_{1234}) \cong \Z/k,
\qquad
\Ext^1(M_k,P_{1234}) \cong \Hom(M_k,P_{1234}) \cong 0
\]
because there are no no-zero morphisms \(P^0\to P_{1234}\).  Of course, the generator of \(\Ext^2(M_k,P_{1234})\) is the class of the projective resolution~\eqref{eq:resolve_counterexample}.  Hence~\(M_k\) admits no projective resolution of length~\(1\) and is a counterexample to Theorem~\ref{the:length_one_example}.

Now we claim that~\(M_k\) is the filtrated \(\K\)\nb-theory of some \(\Cst\)\nb-algebra~\(A_k\) over~\(X\) in the bootstrap class \(\Bootstrap(X)\).  To begin with, \(M\) is the filtrated \(\K\)\nb-theory of some such \(\Cst\)\nb-algebra~\(A\) by Theorem~\ref{the:FK_exhausts_conditional}.  Let~\(B_k\) be a \(\Cst\)\nb-algebra in the bootstrap class with \(\K_0(B_k) = \Z/k\) and \(\K_1(B_k)=0\); for instance, \(B_k\) could be the Cuntz algebra~\(\mathcal{O}_{k+1}\).  Then \(A_k\defeq A\otimes B_k\) has filtrated \(\K\)\nb-theory~\(M_k\) by the K\"unneth Theorem for the \(\K\)\nb-theory of tensor products.

\begin{theorem}
  \label{the:counterexample_special_X}
  Let~\(A_k\) be a \(\Cst\)\nb-algebra in the bootstrap class with \(\FK(A_k)\cong M_k\) as constructed above.  Then~\(A_k\) is not \(\Ideal^2\)\nb-projective.  Hence there exist \(B,D\in\Bootstrap(X)\) that are not \(\KK(X)\)\nb-equivalent but with the same filtrated \(\K\)\nb-theory.
\end{theorem}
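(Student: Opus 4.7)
The plan is to apply Theorem~\ref{the:long-resolution_problem} with $A=A_k$; the argument then reduces to two tasks: (i) verifying that $A_k$ is not $\Ideal^2$\nb-projective, and (ii) arranging the pair $(B,D)$ produced by that theorem to sit inside $\Bootstrap(X)$.

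For task~(i), I would first import the projective-dimension computation for $M_k$ into $\KKcat(X)$ by universality. The natural isomorphism $\Ext^n_\Ideal(A_k,B)\cong\Ext^n_\Nattrafo(M_k,\FK(B))$ stated after Theorem~\ref{the:FK_universal}, together with $\FK(\Repr_{1234})=P_{1234}$ and the length\nb-$2$ resolution~\eqref{eq:resolve_counterexample}, yields $\Ext^2_\Ideal(A_k,\Repr_{1234}) \cong \Ext^2_\Nattrafo(M_k,P_{1234})\cong \Z/k$, which is non-zero.

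If $A_k$ were $\Ideal^2$\nb-projective, the second-stage map $\iota_2\colon A_k\to N_2$ of the phantom castle of~\cite{Meyer:Homology_in_KK_II} would vanish; the exact triangle $\Sigma N_2\to \tilde A_2\to A_k\xrightarrow{\iota_2} N_2$ would then split, realising $A_k$ as a direct summand of $\tilde A_2$. But $\tilde A_2$ carries an $\Ideal$\nb-projective resolution of length~$1$ by construction, so $\Ext^{\ge 2}_\Ideal(\tilde A_2,\blank)=0$ by universality; this vanishing is inherited by direct summands and contradicts the previous paragraph. Hence $A_k$ is not $\Ideal^2$\nb-projective.

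For task~(ii), I would apply the proof of Theorem~\ref{the:long-resolution_problem} with $A=A_k$ and set $B\defeq \tilde A_2$, $D\defeq A_k\oplus N_2[1]$; the theorem then gives $\FK(B)\cong\FK(D)$ and $B\not\cong D$ in $\KKcat(X)$. Both objects lie in $\Bootstrap(X)$: the phantom-castle construction uses only $A_k$ (which is in $\Bootstrap(X)$ by hypothesis) together with direct sums of suspensions of the representing objects~$\Repr_Y$ (which lie in $\Bootstrap(X)$ by Proposition~\ref{pro:bootstrap_proj_Ideal}); since $\Bootstrap(X)$ is a triangulated subcategory, the iterated mapping cones $\tilde A_2$ and~$N_2$ remain inside it. The main obstacle is the implication ``$\Ideal^2$\nb-projective $\Rightarrow \Ext^{\ge 2}_\Ideal=0$'' used in task~(i), which requires unpacking the phantom-castle machinery because $\Ideal^2$\nb-projectivity is formulated in terms of the vanishing of $\Ideal$-powers on morphisms rather than directly in terms of projective-resolution length.
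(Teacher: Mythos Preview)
Your proposal is correct, and both your argument and the paper's rest on the phantom castle/tower from~\cite{Meyer:Homology_in_KK_II}; the difference lies in how the contradiction is reached once one assumes $A_k$ is $\Ideal^2$\nb-projective.

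The paper works entirely with the phantom \emph{tower} $N_0\to N_1\to N_2\to N_3$ built from the explicit length\nb-$2$ resolution~\eqref{eq:resolve_counterexample}: since $A_k\in\Bootstrap(X)$ one gets $N_3\cong 0$ and hence $N_2\cong P_2=\Repr_{1234}$. The crucial step is then the \emph{specific} vanishing $\KK_*(X;P_0,P_2)\cong\Hom_\Nattrafo(P^0,P_{1234})=0$, which forces the map $\iota_1^2\colon N_1\to N_2$ to be zero (it would have to factor through $P_0$). This makes $N_1$ a retract of $P_1$, hence $\Ideal$\nb-projective, and the triangle $\Sigma N_1\to P_0\to A_k$ is then a length\nb-$1$ $\Ideal$\nb-projective resolution, contradicting the projective dimension of $M_k$.

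Your route instead invokes the cellular approximation $\tilde A_2$ from the phantom castle, shows $A_k$ would be a retract of it, and contradicts the nonvanishing $\Ext^2_\Ideal(A_k,\Repr_{1234})\cong\Z/k$. This is cleaner and more transportable to other spaces, but---as you correctly flag---it defers the real work to the assertion that $\tilde A_2$ has $\Ideal$\nb-projective dimension at most~$1$ (equivalently, that $\Ideal^2$\nb-projectivity forces $\Ext^{\ge 2}_\Ideal=0$), which requires unpacking the castle construction in~\cite{Meyer:Homology_in_KK_II}. The paper's argument sidesteps this entirely by exploiting the special Hom\nb-vanishing peculiar to this example, so it is more self\nb-contained. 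For task~(ii) both you and the paper restrict Theorem~\ref{the:long-resolution_problem} to $\Bootstrap(X)$; your explicit verification that the phantom\nb-castle objects remain in the bootstrap class is a welcome addition.
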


\begin{proof}
  The second assertion follows from the first one using Theorem~\ref{the:long-resolution_problem} applied to the bootstrap class \(\Bootstrap(X)\) and the restriction of~\(\Ideal\) to \(\Bootstrap(X)\).

  It remains to prove that~\(A_k\) cannot be \(\Ideal^2\)\nb-projective.  To see this, we lift the resolution~\eqref{eq:resolve_counterexample} to an \(\Ideal\)\nb-projective resolution
  \[
  \xymatrix@1{
  0 \ar[r]|-\circ& P_2 \ar[r]|-\circ& P_1 \ar[r]|-\circ& P_0 \ar[r]& A_k}
  \]
  in \(\Bootstrap(X)\) with boundary maps of degree~\(1\), and embed the latter in a phantom tower (see~\cite{Meyer:Homology_in_KK_II}):
  \[
  \xymatrix@C-1.5em{
    A_k\ar@{=}[r]&
    N_0\ar[rr]^{\iota_0^1}&&
    N_1\ar[rr]^{\iota_1^2}\ar[dl]|-\circ&&
    N_2\ar[rr]^{\iota_2^3}\ar[dl]|-\circ&&
    N_3\ar@{=}[rr]\ar[dl]|-\circ&&
    N_3\ar@{=}[rr]\ar[dl]|-\circ&&
    \dotsb\\
    &&P_0\ar[ul]^{\pi_0}&&
    P_1\ar[ul]^{\pi_1}\ar[ll]&&
    P_2\ar[ul]^{\pi_2}\ar[ll]&&
    0\ar[ul]\ar[ll]&&
    \dotsb\ar[ll]
  }
  \]

  The inductive system \((N_j,\iota_j^{j+1})\) becomes constant at~\(N_3\) because \(P_j=0\) for \(j\ge 3\).  Since~\(A_k\) belongs to the bootstrap class, \(N_3\cong0\) (see the proof of \cite{Meyer:Homology_in_KK_II}*{Proposition 4.5}).  This implies \(N_2\cong P_2\).

  The composite map \(\iota_0^2\colon A_k = N_0\to N_2 \cong P_2\) belongs to~\(\Ideal^2\).  Suppose that~\(A_k\) were \(\Ideal^2\)\nb-projective.  Then \(\iota_0^2=\iota_1^2\circ \iota_0^1\) would vanish, and the long exact homology sequence would yield that the map \(\iota_1^2\colon N_1\to N_2\) must factor through the map \(N_1\to P_0\).  But
  \[
  \KK_*(X;P_0,P_2) \cong
  \Hom_\Nattrafo\bigl(\FK(P_0),\FK(P_2)\bigr)
  = \Hom_\Nattrafo(P^0,P_{1234})
  = 0.
  \]
  Here we have used that filtrated \(\K\)\nb-theory, by universality, is fully faithful on \(\Ideal\)\nb-projective objects and that there are no non-zero module homomorphisms \(P^0\to P_{1234}\).  Since~\(\iota_1^2\) factors through the zero group, it must be the zero map.  But then the map \(P_1\to N_1\) must be a split surjection, so that~\(N_1\) is \(\Ideal\)\nb-projective.  Then the \(\Ideal\)\nb-exact triangle \(\Sigma A_k \to \Sigma N_1\to P_0 \to A_k\) provides an \(\Ideal\)\nb-projective resolution of~\(A_k\) of length~\(1\), which is impossible because \(\FK(A_k)\cong M_k\) has no projective resolution of length~\(1\).  As a consequence, \(A_k\) is not \(\Ideal^2\)\nb-projective.
\end{proof}

We can make the two non-equivalent \(\Cst\)\nb-algebras over~\(X\) with the same filtrated \(\K\)\nb-theory more explicit.  One of them is \(A_k\oplus \Sigma\Repr_{1234}\), the other one is the mapping cone of the map \(\iota_0^2\colon A_k=N_0\to N_2\cong \Repr_{1234}\) in the phantom tower above.  Both have \(M_k\oplus P_{1234}[1]\) as their filtrated \(\K\)\nb-theory.

This counterexample shows that filtrated \(\K\)\nb-theory does not yet classify purely infinite stable nuclear separable \(\Cst\)\nb-algebras in the bootstrap class.

\begin{remark}
  \label{rem:refine_coefficient}
  Refining filtrated \(\K\)\nb-theory by taking filtrated \(\K\)\nb-theory with coefficients does not help.  This gets rid of the counterexample~\(A_k\) constructed above, but other objects of \(\Bootstrap(X)\) without projective resolution of length~\(1\) remain.  An example is \(A\otimes B\), where~\(B\) is a \(\Cst\)\nb-algebra in the bootstrap class with \(\K_*(B)=\mathbb{Q}[0]\) such as an appropriate UHF-algebra.  Its filtrated \(\K\)\nb-theory is \(M\otimes\mathbb{Q}\).  This also has cohomological dimension~\(2\), and this is not affected much by taking \(\K\)\nb-theory with coefficients because \(M\otimes\mathbb{Q}\) is torsion-free.
\end{remark}

\subsection{A refined invariant}
\label{sec:refined_invariant}

There are at least two ways to identify the source of the problem for the space~\(X\).  The first point of view is that what is missing is an exact sequence that has the generator~\(\alpha\) of \(\Nattrafo_1(234,14)\) as its connecting map.  The map~\(\alpha\) corresponds to a map \(\Sigma\Repr_{14}\to \Repr_{234}\) between the representing objects, which we also denote by~\(\alpha\).  In the triangulated category \(\KKcat(X)\), we can embed the latter map in an exact triangle
\begin{equation}
  \label{eq:new_exact_triangle}
  \Sigma \Repr_{14} \xrightarrow{\alpha}
  \Repr_{234} \to \Repr_{12344} \to \Repr_{14}.
\end{equation}
The notation~\(\Repr_{12344}\) will be explained later.  The functors these objects represent sit in a long exact sequence
\begin{equation}
  \label{eq:exact_12344}
  \dotsb \to \FK_{14} \to \FK_{12344} \to \FK_{234}
  \xrightarrow{\alpha} \FK_{14}[1] \to \dotsb
\end{equation}
which is precisely what we want.  The second point of view is that the troublemaker is the non-projective module~\(M\).  Since~\(M\) has a projective resolution of length~\(1\), there is a \emph{unique} object in the bootstrap class with filtrated \(\K\)\nb-theory~\(M\).  Actually, this yields the same object as the first point of view:

\begin{lemma}
  \label{lem:Repr_12344}
  The non-projective module~\(M\) above agrees with \(\FK(\Repr_{12344})\).
\end{lemma}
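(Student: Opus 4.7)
My plan is to apply filtrated \(\K\)\nb-theory to the exact triangle~\eqref{eq:new_exact_triangle} and identify the resulting module with~\(M\). Since \(\FK\) is a stable homological functor and \(\FK(\Repr_Y) = P_Y\) by Theorem~\ref{the:representability}, applying \(\FK\) to~\eqref{eq:new_exact_triangle} yields a six-term exact sequence of \(\Nattrafo\)\nb-modules
\[
P_{14}[1] \xrightarrow{\bar\alpha} P_{234} \to \FK(\Repr_{12344}) \to P_{14} \xrightarrow{\bar\alpha[1]} P_{234}[1],
\]
where \(\bar\alpha = \FK(\alpha)\) corresponds under Yoneda to the generator of \(\Nattrafo_1(234,14) \cong \Z\). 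The key computation is that \(\bar\alpha(Y)\colon \Nattrafo_*(14,Y) \to \Nattrafo_{*+1}(234,Y)\), given by \(x \mapsto x \circ \alpha\), vanishes for every \(Y \neq 14\) by direct inspection of Table~\ref{tab:nattrafo_counterexample} (in each such case either the source or the target vanishes in the relevant degree), while \(\bar\alpha(14)\) is an isomorphism \(\Z \to \Z\) in degree~\(1\) sending \(\ID_{14}\) to~\(\alpha\). Hence the six-term sequence reduces to a short exact sequence
\[
0 \to \coker(\bar\alpha) \to \FK(\Repr_{12344}) \to \ker(\bar\alpha[1]) \to 0
\]
in \(\CMod{\Nattrafo}\), in which \(\coker(\bar\alpha)\) and \(\ker(\bar\alpha[1])\) agree with \(P_{234}\) and \(P_{14}\), respectively, except at \(Y = 14\) where both vanish. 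A pointwise comparison with~\eqref{eq:non-projective_module} then gives \(\FK(\Repr_{12344})(Y) \cong M(Y)\) as graded Abelian groups for every~\(Y\).

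To upgrade this pointwise match to an isomorphism of \(\Nattrafo\)\nb-modules, I would lift the projective resolution~\eqref{eq:resolve_M} of~\(M\) via Yoneda and the fullness of \(\FK\) on \(\Ideal\)\nb-projective objects to a morphism \(\tilde j\colon \Repr_{1234} \to \Repr_{124} \oplus \Repr_{134} \oplus \Repr_{234}\) in \(\KKcat(X)\); its mapping cone \(A_M\) then satisfies \(\FK(A_M) \cong M\) because \(j\) is injective (as noted in the text preceding~\eqref{eq:resolve_M}). A \(\KK(X)\)\nb-equivalence \(A_M \simeq \Repr_{12344}\) can then be produced by comparing the defining triangle of \(A_M\) with~\eqref{eq:new_exact_triangle}, using that \(\tilde j_{234}\colon \Repr_{1234}\to\Repr_{234}\) fits into the extension triangle~\eqref{eq:Repr_extension_triangle} for the open subset \(234 \subset 1234\), and relating the remaining components \((\tilde j_{124},\tilde j_{134})\) to~\(\alpha\) via the extensions \(\Repr_{14}\into\Repr_{j4}\prto\Repr_j\) for \(j = 2, 3\).

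The main obstacle is identifying the extension class at \(Y = 1234\): both \(\FK(\Repr_{12344})(1234)\) and \(M(1234)\) are isomorphic to~\(\Z^2\), but the three induced maps \(\FK(\Repr_{12344})(j4) \to \FK(\Repr_{12344})(1234)\) for \(j = 1,2,3\) must be shown to match the three coordinate embeddings \(\Z \hookrightarrow \Z^3/\langle(1,1,1)\rangle\) describing \(M(1234)\); this amounts to showing that the short exact sequence above is non-split in the specific way that forces the relation \(\sum_j \iota_{j4}^{1234} \cdot \xi_{j4} = 0\) in \(\FK(\Repr_{12344})(1234)\).
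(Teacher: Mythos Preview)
Your first paragraph reproduces the paper's argument exactly: apply \(\FK\) to the triangle~\eqref{eq:new_exact_triangle}, observe that \(\bar\alpha(Y)\) vanishes for \(Y\neq 14\) by parity/vanishing in Table~\ref{tab:nattrafo_counterexample} and is an isomorphism for \(Y=14\), and read off that the groups \(\FK_Y(\Repr_{12344})\) coincide with \(M(Y)\).

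Where you diverge is in upgrading this to a module isomorphism. Your detour through \(A_M\) is unnecessary and the triangle comparison you sketch is not easy to make precise: the triangle defining \(A_M\) has the shape \(\Repr_{1234}\to \Repr_{124}\oplus\Repr_{134}\oplus\Repr_{234}\to A_M\), while~\eqref{eq:new_exact_triangle} has the shape \(\Sigma\Repr_{14}\to\Repr_{234}\to\Repr_{12344}\); relating these requires further octahedra and does not obviously simplify the problem. The paper instead works directly with the module \(\FK(\Repr_{12344})\) and isolates a single concrete check: that the map
\[
\Z^2 \cong \FK_{124}(\Repr_{12344})\oplus \FK_{134}(\Repr_{12344}) \longrightarrow \FK_{1234}(\Repr_{12344}) \cong \Z^2
\]
induced by \(i_{124}^{1234}\) and \(i_{134}^{1234}\) is invertible. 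Once this holds, the known relations in~\(\Nattrafo\) (in particular that the three maps \(ij4\to 1234\) sum to zero modulo the image of \(i_4^{1234}\)) force the entire module structure to agree with~\(M\). This is essentially the same content as your ``main obstacle'', but phrased as a verifiable condition rather than an open-ended extension problem; the paper then omits the routine verification. So your identification of the crux is correct, but the paper's route to it is shorter and more direct than going through~\(A_M\).
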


\begin{proof}
  The map \(\FK_Y(\alpha)\) vanishes for almost all \(Y\in\Loclo(X)^*\) simply because the graded groups involved have different parity or one of them vanishes.  The only exception is \(Y=14\).  The group \(\FK_{14}(\Repr_{14})=\Nattrafo(14,14)\) is generated by the identity natural transformation.  Since~\(\alpha\) is the generator of \(\Nattrafo_1(234,14)\), the map \(\FK_{14}(\alpha)\) is invertible.

  Now we apply \(\FK\) to the long exact sequence for the given exact triangle.  Since \(\FK(\alpha)\) vanishes on most~\(Y\) and is invertible for \(Y=14\), we can easily compute the groups \(\FK_Y(\Repr_{12344})\).  We get the same groups as for the module~\(M\).  It remains to check that the isomorphism can be chosen as an \(\Nattrafo\)\nb-module homomorphism.  The main step is to check that the map
  \[
  \Z^2 \cong \FK_{124}(\Repr_{12344}) \oplus
  \FK_{134}(\Repr_{12344})
  \to \FK_{1234}(\Repr_{12344}) \cong \Z^2
  \]
  is invertible.  Together with the known relations between the various natural transformations, this implies the assertion.  We omit the details of this computation.
\end{proof}

The representing object~\(\Repr_{12344}\) is an algebra of functions on a two-dimensional simplicial complex, which we do not describe here because it is not illuminating.  The functor that it represents, however, can be described rather nicely as follows.  Let~\(A\) be a \(\Cst\)\nb-algebra over~\(X\).  Pull back the extension \(A(14)\into A(124) \prto A(2)\) along the quotient map \(A(234)\prto A(2)\) to an extension \(A(14) \into A(12344) \prto A(234)\).  The object \(\Repr_{12344}\) represents the functor
\begin{equation}
  \label{eq:new_nice}
  \KK_*(X;\Repr_{12344},A) \cong \K_*\bigl(A(12344)\bigr).
\end{equation}
To see this, two observations are necessary.  First, \(\K_*\bigl(\Repr_{12344}(12344)\bigr)\cong\Z\); the generator of this group yields a natural transformation between the two functors in~\eqref{eq:new_nice}.  Secondly, this natural transformation is invertible.  This follows from the Five Lemma, once we know that it extends the known natural isomorphisms
\[
\KK_*(X;\Repr_Y,A) \cong \K_*\bigl(A(Y)\bigr)
\]
for \(Y=14\) and \(Y=234\) to a chain map between the long exact sequences that we get from~\eqref{eq:new_exact_triangle} and from the extension \(A(14)\into A(12344)\prto A(234)\).  This extension also explains the notation \(\Repr_{12344}\).

Now we augment filtrated \(\K\)\nb-theory by adding the covariant functor
\[
B\mapsto \FK_{12344}(B)
\defeq \K_*\bigl(A(12344)\bigr)
\cong \KK_*(X;\Repr_{12344},B).
\]
The new invariant takes values in the category of countable \(\Nattrafo'\)\nb-modules, where \(\Nattrafo'\) is the \(\Z/2\)\nb-graded category whose object set is \(\Loclo'\defeq \Loclo(X)^*\sqcup \{12344\}\) and whose morphisms are the natural transformations between the various filtrated \(\K\)\nb-groups, including now also \(\FK_{12344}\).  These natural transformations can be computed by the Yoneda Lemma:
\[
\Nattrafo'_*(Y,Z)
\cong \KK_*(X;\Repr_Z,\Repr_Y)
\cong \FK_Z(\Repr_Y)
\]
holds for all \(Y,Z\in\Loclo'\).  The category ring for \(\Nattrafo'_*\) is simply the ring \(\KK_*(X;R,R)\) where
\[
R\defeq \bigoplus_{Y\in\Loclo'} \Repr_Y.
\]

We replace the ideal~\(\Ideal\) in \(\KKcat(X)\) studied above by the kernel~\(\Ideal'\) of the enriched filtrated \(\K\)\nb-theory functor
\[
\FK'\colon \KKcat(X) \to \CMod{\Nattrafo'}.
\]
The same arguments as above show that there are enough \(\Ideal'\)\nb-projective objects and that \(\FK'\) is the universal \(\Ideal'\)\nb-exact stable homological functor.

The passage from~\(\Ideal\) to~\(\Ideal'\) has improved the situation because \(\Repr_{12344}\) has now been promoted to an \(\Ideal'\)\nb-projective object and, therefore, ceases to cause trouble.  In principle, something similar can be done in great generality: whenever we have an object of the Abelian approximation that has a projective resolution of length~\(1\), we can lift it uniquely to an object of the triangulated category and refine the ideal by intersecting it with the kernel of the functor this lifted object represents.  However, the policy to quieten troublemakers by promotion has the tendency to encourage new troublemakers, so that it is not clear whether this general strategy always resolves all problems after finitely many steps.  But in the relatively simple example at hand, this turns out to be the case.

To check this, we must describe the category \(\Nattrafo'\).  If \(Y,Z\in\Loclo(X)^*\), then \(\Nattrafo'_*(Y,Z) = \Nattrafo_*(Y,Z)\) is given by the table on page~\pageref{tab:nattrafo_counterexample}.  Furthermore, if \(Z\in\Loclo(X)^*\), then \(\Nattrafo'_*(12344,Z) \cong \FK_Z(\Repr_{12344}) = M(Z)\) by Lemma~\ref{lem:Repr_12344}, and this is described in~\eqref{eq:non-projective_module}.  The upshot is:
\begin{itemize}
\item there are even natural transformations from \(\FK_{12344}\) to \(\FK_{124}\), \(\FK_{134}\), \(\FK_{234}\)---the generators of the respective groups of natural transformations---such that any natural transformation \(\FK_{12344}\Rightarrow\FK_Z\) with \(Z\in\Loclo(X)^*\) is a sum of natural transformations that factor through one of these three and a natural transformation \(\FK_{ij4}\Rightarrow\FK_Z\);

\item the sum of the three natural transformations \(\FK_{12344}\Rightarrow\FK_{1234}\) via \(\FK_{124}\), \(\FK_{134}\) and \(\FK_{234}\) vanishes, and all other relations follow from these and the already known ones listed after~\eqref{eq:Auslander-Reiten}.
\end{itemize}

The exact triangle~\eqref{eq:new_exact_triangle} yields a long exact sequence
\[
\dotsb \to
\Nattrafo'_{*+1}(Y,234) \xrightarrow{\alpha}
\Nattrafo'_*(Y,14) \to
\Nattrafo'_*(Y,12344) \to
\Nattrafo'_*(Y,234) \to \dotsb,
\]
which we may use to compute \(\Nattrafo'_*(Y,12344)\) for all \(Y\in\Loclo'\).  The map~\(\alpha\) induces an isomorphism for \(Y=234\) and the zero map for all other~\(Y\) because the source and target have opposite parity or one of them vanishes.  Thus
\[
\begin{array}{l*{6}{|c}}
  Y&4&14,24,34&124,134,234&1234&1,2,3&12344\\\hline
  \Nattrafo'_*(Y,12344)&\Z^2&\Z&0&\Z[1]&\Z[1]&\Z\\
\end{array}
\]
These groups inherit from~\(M\) their invariance under permutations of \(1,2,3\).  Inspecting composition with natural transformations in~\(\Nattrafo\), we arrive at the following:
\begin{itemize}
\item there are even natural transformations \(\FK_{j4}\Rightarrow\FK_{12344}\) for \(j=1,2,3\), such that any natural transformation \(\FK_Y\Rightarrow\FK_{12344}\) with \(Y\in\Loclo(X)^*\) factors through one of them;

\item the sum of the three natural transformations \(\FK_4\Rightarrow\FK_{12344}\) vanishes,

\item the natural transformations \(\FK_{j4}\Rightarrow\FK_{1234\setminus j}\) via \(\FK_{12344}\) vanish;

\item all other relations follow from these and the already known ones.
\end{itemize}
As one may expect, the basic natural transformations \(\FK_{14}\Rightarrow\FK_{12344}\Rightarrow\FK_{234}\) are induced by the maps \(\Repr_{234} \to \Repr_{12344} \to \Repr_{14}\) in the exact triangle~\eqref{eq:new_exact_triangle}.

The indecomposable morphisms of the new category \(\Nattrafo'\) are the maps in the following diagram:
\[
\xymatrix{
  & 14 \ar[dr]&&
  124 \ar[dr]&& 1
  \ar[dr]|-\circ\\
  4 \ar[ur] \ar[r] \ar[dr]&
  24 \ar[r]&12344\ar[r]\ar[ur]\ar[dr]&
  134 \ar[r]&
  1234 \ar[ur] \ar[r] \ar[dr]&
  2\ar[r]|-\circ&4\\
  & 34 \ar[ur]&&
  234 \ar[ur]&& 3
  \ar[ur]|-\circ
}
\]

The category ring of \(\Nattrafo'\) again has the by now familiar structure: it is a split nilpotent extension of the semisimple algebra \(\Nattrafo'_\sesi \cong \Z^{\Loclo'}\) spanned by the identity transformations on the objects and a nilpotent ideal \(\Nattrafo'_\nil\) that is the subgroup generated by \(\Nattrafo'(Y,Z)\) with \(Y\neq Z\).

\begin{definition}
  \label{def:exact_over_extended}
  A module over \(\Nattrafo'\) is \emph{exact} if it is exact as an \(\Nattrafo\)\nb-module and the three sequences
  \[
  \dotsb \to N_{*+1}(ij4) \to N_*(k4) \to N_*(12344) \to
  N_*(ij4) \to \dotsb
  \]
  for \(\{i,j,k\}=\{1,2,3\}\) are exact as well.
\end{definition}

The range of the invariant \(\FK'\) consists of exact \(\Nattrafo'\)\nb-modules; the three new exact sequences are, in fact, equivalent for symmetry reasons, and the extension
\[
\dotsb \to N_{*+1}(234) \to N_*(14) \to N_*(12344) \to
N_*(234) \to \dotsb
\]
is built into the definition of~\(\FK_{12344}\).

Let~\(N\) be an exact \(\Nattrafo'\)\nb-module and let \(N'\defeq \Nattrafo'_\nil\cdot N\).  The description of \(N'(14)\), \(N'(1)\), and \(N'(4)\) is the same as for the category \(\Nattrafo\), so that these groups remain kernels of certain maps, as needed.  Furthermore, \(N'(1234)\) is the kernel of the map \(N(1234)\to N(12344)[1]\) induced by the generator of \(\Nattrafo_1(1234,12344)\), so that the problem that appeared for the category \(\Nattrafo\) is cured.

The computation of \(N'(124)\) changes because this group is now the range of the arrow \(N(12344)\to N(124)\).  But this is part of a long exact sequence because~\(N\) is exact, and we get
\[
N'(124) = \ker \bigl(N(124) \to N(34)[1]\bigr),
\]
and similarly for \(N'(134)\) and \(N'(234)\).

Finally, \(N'(12344)\) is the sum of the ranges of the maps \(N(j4)\to N(12344)\) for \(j=1,2,3\).  Using exactness, we identify this in two steps with the kernel of the map \(N(12344)\to N(4)[1]\) induced by the generator of \(\Nattrafo'_1(12344,4)\).

As a result, the submodule \(\Nattrafo'_\nil\cdot N\) is described using kernels of maps \(N(Y)\to N(Z)\).  By the way, these arrows are the longest arrows starting at~\(Y\) as in Remark~\ref{rem:longest}.  The same arguments as for totally ordered spaces now show:

\begin{theorem}
  \label{the:counterexample_cured_projective}
  An \(\Nattrafo'\)-module~\(N\) is free if and only if it is projective, if and only if it is exact and \(N(Y)\) is a free group for all \(Y\in\Loclo'\).
\end{theorem}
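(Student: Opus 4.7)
The plan is to imitate the proof of Theorem~\ref{the:projective_module_Nattrafo} verbatim, substituting~\(\Nattrafo'\) for~\(\Nattrafo\) throughout. The prerequisites are all present: \(\Nattrafo'\) is a split nilpotent extension of \(\Nattrafo'_\sesi\cong \Z^{\Loclo'}\) (discussed just before the theorem), the Yoneda Lemma identifies \(\Hom_{\Nattrafo'}(P_Y, N)\cong N(Y)\) for the free module~\(P_Y\defeq \Nattrafo'_*(Y,\blank)\) on any \(Y\in\Loclo'\), Nakayama (Proposition~\ref{pro:Nakayama}) applies unchanged because \(\Nattrafo'_\nil\) is still nilpotent, and Proposition~\ref{pro:exact_modules} goes through with the enlarged definition of exactness (Definition~\ref{def:exact_over_extended}) because the six additional sequences are again long exact sequences induced by an exact triangle, namely~\eqref{eq:new_exact_triangle} and its symmetric siblings.

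First I would establish (1)\(\Rightarrow\)(2)\(\Rightarrow\)(3). The implication (1)\(\Rightarrow\)(2) is Yoneda. For (2)\(\Rightarrow\)(3), note that each free generator \(P_Y=\FK'(\Repr_Y)\) is exact: for \(Y\in\Loclo(X)^*\) this is the content of the six-term sequences~\eqref{eq:six-term_intro} together with the new long exact sequence~\eqref{eq:exact_12344} (and its symmetric versions), while for \(Y=12344\) exactness follows from applying the representable functor \(\KK_*(X;\Repr_{12344},\blank)\) to the same exact triangles. Moreover each group \(\Nattrafo'_*(Y,Z)\) is a free Abelian group---this is Table~\ref{tab:nattrafo_counterexample} together with Lemma~\ref{lem:Repr_12344} for the new row and column. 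Both exactness and freeness of the individual values are inherited by direct summands of direct sums, so projective modules satisfy~(3).

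The heart of the argument is (3)\(\Rightarrow\)(1). Assume~\(N\) is exact with each \(N(Y)\) free Abelian. The description of \(\Nattrafo'_\nil\cdot N\) given in the paragraphs preceding the theorem identifies each \((\Nattrafo'_\nil\cdot N)(Y)\) as the kernel of a specific natural transformation \(N(Y)\to N(Z)\) out of~\(Y\); consequently \(N_\sesi(Y)=N(Y)/(\Nattrafo'_\nil\cdot N)(Y)\) embeds as a subgroup of the free Abelian group \(N(Z)\), so \(N_\sesi\) is a free \(\Nattrafo'_\sesi\)-module. Set \(P\defeq \Nattrafo'\otimes_{\Nattrafo'_\sesi} N_\sesi\), a free \(\Nattrafo'\)-module, and lift a \(\Nattrafo'_\sesi\)-splitting of \(N\prto N_\sesi\) to an \(\Nattrafo'\)-linear map \(f\colon P\to N\) by adjunction. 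Applying \(\Nattrafo'_\sesi\otimes_{\Nattrafo'}\blank\) shows \(f_\sesi\) is an isomorphism, hence \(\coker(f)_\sesi=0\) and Nakayama forces \(f\) to be surjective. To see injectivity, let \(K=\ker(f)\); it remains to show \(K_\sesi=0\), equivalently \(\Tor_1^{\Nattrafo'}(\Nattrafo'_\sesi,N)=0\). This is the analog of Lemma~\ref{lem:Tor_technical}: choosing an \(\Nattrafo'\)-projective presentation \(K\into P\prto N\), exactness of~\(N\) and~\(P\) forces~\(K\) exact by Proposition~\ref{pro:exact_modules}, and then the kernel description of \(\Nattrafo'_\nil\cdot\blank\) exhibits each \(K_\sesi(Y)\) as a subgroup of \(K(Z)\), which in turn injects into \(P(Z)\); hence \(K_\sesi\to P_\sesi\) is a monomorphism and the long \(\Tor\) sequence gives \(\Tor_1^{\Nattrafo'}(\Nattrafo'_\sesi,N)=0\). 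A second application of Nakayama yields \(K=0\), so \(f\) is the desired isomorphism \(N\cong P\).

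The main obstacle is the very last point: one must check case by case, for each \(Y\in\Loclo'\), that the formula describing \((\Nattrafo'_\nil\cdot N)(Y)\) as a kernel is valid whenever~\(N\) is exact, including the troublesome value \(Y=1234\) which was the failure point for~\(\Nattrafo\). The paper has just carried out exactly this verification immediately before the theorem, using that the new representable \(\FK_{12344}\) supplies precisely the missing long exact sequence, so no further work is required beyond citing that discussion and applying~\eqref{eq:range_sum} in the style of Lemma~\ref{lem:gen_M_nil}.
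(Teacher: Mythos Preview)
Your proposal is correct and follows exactly the route the paper intends: the paper's own proof is the single sentence ``The same arguments as for totally ordered spaces now show,'' and you have faithfully unpacked those arguments, citing the kernel description of \(\Nattrafo'_\nil\cdot N\) established immediately before the theorem as the replacement for Lemma~\ref{lem:gen_M_nil}. One trivial slip: Definition~\ref{def:exact_over_extended} adds \emph{three} new exact sequences (one for each partition \(\{i,j,k\}=\{1,2,3\}\)), not six.
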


\begin{theorem}
  \label{the:counterexample_cured_resolution}
  An \(\Nattrafo'\)-module~\(N\) has a projective resolution of length~\(1\) if and only if it is exact.
\end{theorem}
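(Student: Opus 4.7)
The proof will mirror that of Theorem~\ref{the:length_one_example} almost verbatim, once the ingredients used there are upgraded from $\Nattrafo$ to $\Nattrafo'$. The plan is to combine Theorem~\ref{the:counterexample_cured_projective} with an analogue of Proposition~\ref{pro:exact_modules} for the refined notion of exactness in Definition~\ref{def:exact_over_extended}.

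For the ``only if'' direction, suppose $N$ admits a projective resolution $0 \to P_1 \to P_0 \to N \to 0$. The free modules $P_Y$ for $Y \in \Loclo'$ are exact in the sense of Definition~\ref{def:exact_over_extended}: for $Y \in \Loclo(X)^*$ this follows as in the totally ordered case from \eqref{eq:six-term_intro}, while the three new long exact sequences involving $N(12344)$ are obtained by representability from the exact triangle \eqref{eq:new_exact_triangle} together with its $S_3$\nobreakdash-symmetric analogues. Retracts and direct sums of exact modules are exact, so every projective $\Nattrafo'$\nobreakdash-module is exact. The analogue of Proposition~\ref{pro:exact_modules} then forces $N$ to be exact; the proof carries over verbatim, since a short exact sequence of $\Nattrafo'$\nobreakdash-modules restricts, for each chain complex appearing in Definition~\ref{def:exact_over_extended}, to a short exact sequence of chain complexes, and the long exact homology sequence guarantees the two-out-of-three property.

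For the ``if'' direction, let $N$ be an exact $\Nattrafo'$\nobreakdash-module. Choose a countable free $\Nattrafo'$\nobreakdash-module $P$ with an epimorphism $\pi \colon P \twoheadrightarrow N$ (as in the proof of Theorem~\ref{the:length_one_example}, apply $\FK^{\prime\lad}$ to a free presentation of $N$ in $\CMod{\Nattrafo'}$), and set $K \defeq \ker \pi$. Since $P$ is projective, hence exact, and $N$ is exact, the analogue of Proposition~\ref{pro:exact_modules} shows that $K$ is exact. Moreover $P(Y) = \bigoplus_i \Nattrafo'_*(Y_i, Y)$ is a free Abelian group for every $Y \in \Loclo'$ — by inspection of Table~\ref{tab:nattrafo_counterexample} together with the computation of $\Nattrafo'_*(12344,\blank)$ and $\Nattrafo'_*(\blank,12344)$ given after Lemma~\ref{lem:Repr_12344}, every morphism group of $\Nattrafo'$ is either $0$, $\Z$ or $\Z^2$. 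Thus $K(Y) \subseteq P(Y)$ is a subgroup of a free Abelian group and hence itself free. Applying Theorem~\ref{the:counterexample_cured_projective} to $K$, which is exact and takes free Abelian values, we conclude that $K$ is projective, so that $0 \to K \to P \to N \to 0$ is a projective resolution of $N$ of length~$1$.

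The only non\nobreakdash-routine point is the upgrade of Proposition~\ref{pro:exact_modules} to $\Nattrafo'$, which is what truly encodes that passing from~$\Nattrafo$ to~$\Nattrafo'$ has ``cured'' the pathology exhibited by the module~$M$ of Lemma~\ref{lem:Repr_12344}; however, as indicated above, the proof is the same long exact homology argument and presents no genuine obstacle. The substantive work has already been carried out in Theorem~\ref{the:counterexample_cured_projective} and in the explicit identification of $\Nattrafo'_\nil \cdot N$ preceding it.
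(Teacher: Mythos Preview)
Your proposal is correct and follows essentially the same approach as the paper, which simply states that ``the same arguments as for totally ordered spaces now show'' Theorems~\ref{the:counterexample_cured_projective}--\ref{the:counterexample_cured_UCT} without spelling out a separate proof. Your write-up is in fact a slight streamlining of the proof of Theorem~\ref{the:length_one_example}: you go directly from exactness to a length\nobreakdash-$1$ resolution via Theorem~\ref{the:counterexample_cured_projective}, bypassing the intermediate $\Tor_1^{\Nattrafo'}(\Nattrafo'_\sesi,\blank)$ step, which is not needed for the bare equivalence stated here. One cosmetic remark: the parenthetical about applying $\FK^{\prime\lad}$ to obtain~$P$ is superfluous---you only need a free $\Nattrafo'$\nobreakdash-module surjecting onto~$N$, which exists for elementary reasons and is exactly what the paper does in the totally ordered case.
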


\begin{theorem}
  \label{the:counterexample_cured_UCT}
  Let \(A\) and~\(B\) be \(\Cst\)\nb-algebras over the four-point space~\(X\) under consideration.  If~\(A\) belongs to the bootstrap class \(\Bootstrap(X)\), then there is a natural short exact sequence
  \[
  \Ext^1_{\Nattrafo'}\bigl(\FK'(A)[1],\FK'(B)\bigr)
  \into \KK_*(X;A,B)
  \prto \Hom_{\Nattrafo'}\bigl(\FK'(A),\FK'(B)\bigr).
  \]
  In particular, morphisms \(\FK'(A)\to\FK'(B)\) lift to elements in \(\KK_*(X;A,B)\).  If both \(A\) and~\(B\) belong to the bootstrap class, then an isomorphism \(\FK'(A)\cong\FK'(B)\) lifts to a \(\KK(X)\)\nb-equivalence.
\end{theorem}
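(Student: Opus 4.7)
The plan is to follow the same three-step strategy used to prove Theorem~\ref{the:classify_ordered} in the totally ordered case, but transplanted to the refined ideal $\Ideal'$ and the refined universal functor $\FK'\colon \KKcat(X)\to\CMod{\Nattrafo'}$. The paper has already recorded that there are enough $\Ideal'$-projective objects and that $\FK'$ is the universal $\Ideal'$-exact stable homological functor, so the general framework of \cite{Meyer-Nest:Homology_in_KK} is in force. The two missing ingredients are (i) existence of a length-$1$ projective resolution of $\FK'(A)$, and (ii) an analogue of Proposition~\ref{pro:bootstrap_proj_Ideal} identifying $\Bootstrap(X)$ with the localising subcategory generated by the $\Ideal'$-projective objects.

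First I would verify that $\FK'(A)$ is an exact $\Nattrafo'$-module for every $\Cst$-algebra $A$ over $X$. The $\Nattrafo$-part of the exactness is the usual six-term exact sequence~\eqref{eq:six-term_intro} in $\K$-theory; the three additional sequences required by Definition~\ref{def:exact_over_extended} are precisely the $\K$-theory long exact sequences of the pullback extensions $A(14)\into A(12344)\prto A(234)$ described around~\eqref{eq:new_nice} together with their two symmetric counterparts obtained by permuting $\{1,2,3\}$. Granted this, Theorem~\ref{the:counterexample_cured_resolution} provides a projective resolution of $\FK'(A)$ in $\CMod{\Nattrafo'}$ of length~$1$. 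For (ii), the proof of Proposition~\ref{pro:bootstrap_proj_Ideal} transfers verbatim: the generators $i_x(\C)\cong\Repr_{U_x}$ are still $\Ideal'$-projective and still generate $\Bootstrap(X)$, and the new $\Ideal'$-projective object $\Repr_{12344}$ already lies in $\Bootstrap(X)$ because it sits in the exact triangle~\eqref{eq:new_exact_triangle} built from $\Repr_{14}$ and $\Repr_{234}$.

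With these two inputs, the short exact sequence follows from the general conditional Universal Coefficient Theorem \cite{Meyer-Nest:Homology_in_KK}*{Theorem~4.4} applied to the universal functor $\FK'$ and the hypothesis $A\in\Bootstrap(X)$, exactly as in the proof of Theorem~\ref{the:UCT_conditional}. The lifting statements are then formal consequences, reproducing the argument of Corollary~\ref{cor:lift_iso}: a morphism $\FK'(A)\to\FK'(B)$ lifts because the right-hand map of the UCT is surjective, and for an isomorphism $f$ one lifts $f$ and $f^{-1}$ to elements $\alpha\in\KK_0(X;A,B)$, $\beta\in\KK_0(X;B,A)$ and observes that $\ID-\beta\alpha$ lies in $\Ext^1_{\Nattrafo'}\bigl(\FK'(A)[1],\FK'(A)\bigr)$, which is a square-zero ideal in $\KK_0(X;A,A)$ by naturality; hence $\beta\alpha$ and $\alpha\beta$ are invertible.

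The main obstacle, and the only one requiring genuine care, is the verification that the $\FK'$-image of an arbitrary $\Cst$-algebra over $X$ really is $\Nattrafo'$-exact in the sense of Definition~\ref{def:exact_over_extended}. Once the three symmetric extensions $A(14)\into A(12344)\prto A(234)$ (and their permutations) are identified as genuine semi-split $\Cst$-algebra extensions producing the required $\K$-theoretic six-term sequences, the rest of the proof is essentially a formal consequence of the machinery already developed in earlier sections.
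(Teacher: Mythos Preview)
Your proposal is correct and follows essentially the same route as the paper, which simply records that ``the same arguments as for totally ordered spaces'' yield this theorem: one combines Theorem~\ref{the:counterexample_cured_resolution} (length-$1$ resolutions for exact $\Nattrafo'$-modules), the fact that $\FK'(A)$ is always exact, the identification of $\Bootstrap(X)$ with the localising subcategory generated by the $\Ideal'$-projectives, and then invokes the conditional UCT and the lifting argument exactly as in Theorem~\ref{the:UCT_conditional} and Corollary~\ref{cor:lift_iso}. The only cosmetic difference is that the paper dispatches the three new exact sequences in Definition~\ref{def:exact_over_extended} by noting they are ``equivalent for symmetry reasons'' (via the permutation symmetry of $\Repr_{12344}$ and the exact triangle~\eqref{eq:new_exact_triangle}), whereas you phrase it in terms of three symmetric pullback extensions; both viewpoints yield the same conclusion.
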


\begin{corollary}
  \label{cor:counterexample_cured_classify}
  The map \(A\mapsto \FK'(A)\) is a bijection between the set of isomorphism classes of tight, stable, purely infinite, separable, nuclear \(\Cst\)\nb-algebras over~\(X\) with simple subquotients in the bootstrap class and the set of isomorphism classes of countable exact \(\Nattrafo'\)\nb-modules.
\end{corollary}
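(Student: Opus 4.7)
The plan is to combine Theorem~\ref{the:counterexample_cured_UCT}, an enriched-invariant version of Theorem~\ref{the:FK_exhausts_conditional}, and Kirchberg's classification of tight purely infinite nuclear separable stable $\Cst$\nb-algebras over~$X$, following the same pattern as the proof of Theorem~\ref{the:exhaust_ordered} in the totally ordered case.

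For injectivity, let $A$ and~$B$ be tight, stable, purely infinite, separable, nuclear \(\Cst\)\nb-algebras over~\(X\) whose simple subquotients lie in the bootstrap class, and suppose $\FK'(A) \cong \FK'(B)$. By \cite{Meyer-Nest:Bootstrap}*{Corollary~4.13}, a tight nuclear \(\Cst\)\nb-algebra over~\(X\) belongs to \(\Bootstrap(X)\) precisely when its simple subquotients (which for tight algebras coincide with the fibres) belong to~\(\Bootstrap\). Hence both~$A$ and~$B$ lie in \(\Bootstrap(X)\). By Theorem~\ref{the:counterexample_cured_UCT}, the given isomorphism $\FK'(A) \cong \FK'(B)$ lifts to a \(\KK(X)\)\nb-equivalence $A \simeq B$ (the Nakayama-style argument of Corollary~\ref{cor:lift_iso} works verbatim for \(\Nattrafo'\), since the obstruction still lies in a nilpotent ideal by naturality of the UCT). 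Kirchberg's classification theorem then lifts this \(\KK(X)\)\nb-equivalence to an \(X\)\nb-equivariant \Star{}isomorphism $A \cong B$.

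For surjectivity, let~$M$ be a countable exact \(\Nattrafo'\)\nb-module. By Theorem~\ref{the:counterexample_cured_resolution}, $M$ admits a projective resolution of length~$1$ in \(\CMod{\Nattrafo'}\). Since \(\FK'\) is the universal \(\Ideal'\)\nb-exact stable homological functor with enough \(\Ideal'\)\nb-projective objects (as established in~\S\ref{sec:refined_invariant}), the proof of Theorem~\ref{the:FK_exhausts_conditional} applies verbatim to~\(\FK'\): lift the resolution to an \(\Ideal'\)\nb-exact triangle $\Sigma A \to P_1 \to P_0 \to A$ in \(\KKcat(X)\), where $P_0, P_1$ are \(\Ideal'\)\nb-projective; since \(\Ideal'\)\nb-projectives lie in \(\Bootstrap(X)\) (by the analogue of Proposition~\ref{pro:bootstrap_proj_Ideal}, which holds because the new generator~$\Repr_{12344}$ has already been built from objects in \(\Bootstrap(X)\) via the exact triangle~\eqref{eq:new_exact_triangle}), the cone~$A$ belongs to \(\Bootstrap(X)\) and satisfies $\FK'(A) \cong M$. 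Finally, \cite{Meyer-Nest:Bootstrap}*{Corollary~5.5} replaces~$A$, up to \(\KK(X)\)\nb-equivalence, by a tight, nuclear, purely infinite, stable, separable \(\Cst\)\nb-algebra over~$X$ whose simple subquotients belong to~\(\Bootstrap\), without changing~$\FK'$.

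The two halves together give the claimed bijection. The only non-routine step is the verification that the proofs of Theorems~\ref{the:FK_exhausts_conditional} and~\ref{the:classify_ordered} really transport to the refined ideal~$\Ideal'$; but this is automatic from the properties of~$\FK'$ already established in~\S\ref{sec:refined_invariant} (enough projectives, universality, compatibility with countable direct sums), so there is no genuine obstacle—the heavy lifting has been done in Theorems~\ref{the:counterexample_cured_projective}--\ref{the:counterexample_cured_UCT}, and this corollary is essentially a packaging statement.
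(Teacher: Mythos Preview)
Your argument is correct and follows exactly the pattern the paper establishes in Theorem~\ref{the:exhaust_ordered}; the paper itself gives no explicit proof of this corollary, treating it as an immediate consequence of Theorems~\ref{the:counterexample_cured_resolution} and~\ref{the:counterexample_cured_UCT} together with Kirchberg's classification and \cite{Meyer-Nest:Bootstrap}*{Corollary~5.5}, which is precisely what you have spelled out.
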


\section{Conclusion}
\label{sec:conclusion}

We have obtained a Universal Coefficient Theorem that computes \(\KK_*(X;A,B)\) for \(A\) in the bootstrap class and~\(X\) of a very special form, namely, \(\{1,\dotsc,n\}\) with the Alexandrov topology from the total order.  This Universal Coefficient Theorem can be used to carry over classification results for simple, nuclear, purely infinite \(\Cst\)\nb-algebras to nuclear, purely infinite \(\Cst\)\nb-algebras with primitive ideal space~\(X\), using filtrated \(\K\)\nb-theory as the invariant.

For general finite topological spaces~\(X\), we still get a spectral sequence that computes \(\KK_*(X;A,B)\) using filtrated \(\K\)\nb-theory, but this spectral sequence need not degenerate to an exact sequence, so that isomorphisms on filtrated \(\K\)\nb-theory need not lift to \(X\)\nb-equivariant \(\KK\)\nb-equivalences.  In fact, we have found a counterexample.  At the same time, we were able to fix the counterexample by refining filtrated \(\K\)\nb-theory.  It is unclear whether such a refinement is available for all finite topological spaces and how it looks like.

\begin{bibdiv}
  \begin{biblist}
\bib{Beligiannis:Relative}{article}{
  author={Beligiannis, Apostolos},
  title={Relative homological algebra and purity in triangulated categories},
  journal={J. Algebra},
  volume={227},
  date={2000},
  number={1},
  pages={268--361},
  issn={0021-8693},
  doi={10.1006/jabr.1999.8237},
  review={\MRref {1754234}{2001e:18012}},
}

\bib{Bonkat:Thesis}{thesis}{
  author={Bonkat, Alexander},
  title={Bivariante \(K\)\nobreakdash -Theorie f\"ur Kategorien projektiver Systeme von \(C^*\)\nobreakdash -Al\-ge\-bren},
  date={2002},
  institution={Westf. Wilhelms-Universit\"at M\"unster},
  type={phdthesis},
  language={German},
  eprint={http://deposit.ddb.de/cgi-bin/dokserv?idn=967387191},
}

\bib{Christensen:Ideals}{article}{
  author={Christensen, J. Daniel},
  title={Ideals in triangulated categories: phantoms, ghosts and skeleta},
  journal={Adv. Math.},
  volume={136},
  date={1998},
  number={2},
  pages={284--339},
  issn={0001-8708},
  doi={10.1006/aima.1998.1735},
  review={\MRref {1626856}{99g:18007}},
}

\bib{Eilenberg-Moore:Foundations}{article}{
  author={Eilenberg, Samuel},
  author={Moore, John Coleman},
  title={Foundations of relative homological algebra},
  journal={Mem. Amer. Math. Soc. No.},
  volume={55},
  date={1965},
  pages={39},
  issn={0065-9266},
  review={\MRref {0178036}{31\,\#2294}},
}

\bib{Kirchberg:Michael}{article}{
  author={Kirchberg, Eberhard},
  title={Das nicht-kommutative Michael-Auswahlprinzip und die Klassifikation nicht-einfacher Algebren},
  language={German},
  pages={92--141},
  book={ title={\(C^*\)-Algebras (M\"unster, 1999)}, publisher={Springer}, address={Berlin}, year={2000}, },
  review={\MRref {1796912}{2001m:46161}},
}

\bib{Meyer:Homology_in_KK_II}{article}{
  author={Meyer, Ralf},
  title={Homological algebra in bivariant \(\textup {K}\)\nobreakdash -theory and other triangulated categories. II},
  journal={Tbil. Math. J.},
  volume={1},
  date={2008},
  pages={165--210},
  issn={1875-158X},
  review={\MRref {2563811}{2011c:19012}},
}

\bib{Meyer-Nest:BC}{article}{
  author={Meyer, Ralf},
  author={Nest, Ryszard},
  title={The Baum--Connes conjecture via localisation of categories},
  journal={Topology},
  volume={45},
  date={2006},
  number={2},
  pages={209--259},
  issn={0040-9383},
  review={\MRref {2193334}{2006k:19013}},
  doi={10.1016/j.top.2005.07.001},
}

\bib{Meyer-Nest:Bootstrap}{article}{
  author={Meyer, Ralf},
  author={Nest, Ryszard},
  title={\(C^*\)\nobreakdash -Algebras over topological spaces: the bootstrap class},
  journal={M\"unster J. Math.},
  volume={2},
  date={2009},
  pages={215--252},
  issn={1867-5778},
  review={\MRref {2545613}{2011a:46105}},
}

\bib{Meyer-Nest:Homology_in_KK}{article}{
  author={Meyer, Ralf},
  author={Nest, Ryszard},
  title={Homological algebra in bivariant $K$-theory and other triangulated categories. I},
  conference={ title={Triangulated categories}, },
  book={ series={London Math. Soc. Lecture Note Ser.}, editor={Holm, Thorsten}, editor={J{\o }rgensen, Peter}, editor={Rouqier, Rapha\"el}, volume={375}, publisher={Cambridge Univ. Press}, place={Cambridge}, },
  date={2010},
  pages={236--289},
  review={\MRref {2681710}{}},
}

\bib{Neeman:Triangulated}{book}{
  author={Neeman, Amnon},
  title={Triangulated categories},
  series={Annals of Mathematics Studies},
  volume={148},
  publisher={Princeton University Press},
  place={Princeton, NJ},
  date={2001},
  pages={viii+449},
  isbn={0-691-08685-0},
  isbn={0-691-08686-9},
  review={\MRref {1812507}{2001k:18010}},
}

\bib{Restorff:Classification}{article}{
  author={Restorff, Gunnar},
  title={Classification of Cuntz--Krieger algebras up to stable isomorphism},
  journal={J. Reine Angew. Math.},
  volume={598},
  year={2006},
  pages={185--210},
  issn={0075-4102},
  review={\MRref {2270572}{2007m:46090}},
  doi={10.1515/CRELLE.2006.074},
}

\bib{Restorff:Thesis}{thesis}{
  author={Restorff, Gunnar},
  title={Classification of Non-Simple $\textup C^*$\nobreakdash -Algebras},
  type={phdthesis},
  institution={K{\o }benhavns Universitet},
  date={2008},
  isbn={978-87-91927-25-6},
  eprint={http://www.math.ku.dk/~restorff/papers/afhandling_med_ISBN.pdf},
}

\bib{Rordam:Classification_extensions}{article}{
  author={R{\o }rdam, Mikael},
  title={Classification of extensions of certain \(C^*\)\nobreakdash -algebras by their six term exact sequences in \(K\)\nobreakdash -theory},
  journal={Math. Ann.},
  volume={308},
  date={1997},
  number={1},
  pages={93--117},
  issn={0025-5831},
  doi={10.1007/s002080050067},
  review={\MRref {1446202}{99b:46108}},
}

\bib{Vickers:Topology_logic}{book}{
  author={Vickers, Steven},
  title={Topology via logic},
  series={Cambridge Tracts in Theoretical Computer Science},
  volume={5},
  publisher={Cambridge University Press},
  place={Cambridge},
  date={1989},
  pages={xvi+200},
  isbn={0-521-36062-5},
  review={\MRref {1002193}{90j:03110}},
}
  \end{biblist}
\end{bibdiv}
\end{document}